\documentclass[notitlepage,11pt,reqno]{amsart}
\usepackage[foot]{amsaddr}
\usepackage{amssymb,nicefrac,bm,upgreek,mathtools,verbatim}
\usepackage[final]{hyperref}
\usepackage[mathscr]{eucal}
\usepackage{dsfont}
\usepackage[normalem]{ulem}
\usepackage{amsopn,esint}
\usepackage[T1]{fontenc}
\usepackage{enumerate}
\usepackage[shortlabels]{enumitem}
\usepackage{bookmark}
\usepackage{wasysym}
\usepackage{esint}
\usepackage[titletoc,toc,page]{appendix}
\usepackage[utf8]{inputenc}
\usepackage{hyperref}
\usepackage{graphicx}   
\usepackage{tikz}       
\usepackage{caption}    
\usepackage{float}      
\usepackage{pgfplots}
\pgfplotsset{width=10cm,compat=1.9}
\usepgfplotslibrary{external}
\usepackage{graphicx}
\usepackage{subcaption}
\usepackage{apptools}
\AtAppendix{\counterwithin{lemma}{section}} 

\usepackage[margin=1in]{geometry}
\newcommand{\stkout}[1]{\ifmmode\text{\sout{\ensuremath{#1}}}\else\sout{#1}\fi}

\newtheorem{lemma}{Lemma}[section]
\newtheorem{theorem}{Theorem}[section]

\theoremstyle{definition}
\newtheorem{definition}{Definition}[section]

\newtheorem{remark}{Remark}[section]
\numberwithin{theorem}{section}
\numberwithin{equation}{section}

\hypersetup{
  colorlinks=true,
  citecolor=mblue,
  linkcolor=mblue,
  urlcolor = blue,
  anchorcolor = blue,
  frenchlinks=false,
  pdfborder={0 0 0},
  naturalnames=false,
  hypertexnames=false,
  breaklinks}
\usepackage[capitalize,nameinlink]{cleveref}

\usepackage[abbrev,msc-links,nobysame,citation-order]{amsrefs}

\crefname{section}{Section}{Sections}
\crefname{subsection}{Section}{Sections}
\crefname{condition}{Condition}{Conditions}
\crefname{hypothesis}{Hypothesis}{Conditions}
\crefname{assumption}{Assumption}{Assumptions}
\crefname{lemma}{Lemma}{Lemmas}
\crefname{fact}{Fact}{Facts}

\Crefname{figure}{Figure}{Figures}

\crefformat{equation}{\textup{#2(#1)#3}}
\crefrangeformat{equation}{\textup{#3(#1)#4--#5(#2)#6}}
\crefmultiformat{equation}{\textup{#2(#1)#3}}{ and \textup{#2(#1)#3}}
{, \textup{#2(#1)#3}}{, and \textup{#2(#1)#3}}
\crefrangemultiformat{equation}{\textup{#3(#1)#4--#5(#2)#6}}%
{ and \textup{#3(#1)#4--#5(#2)#6}}{, \textup{#3(#1)#4--#5(#2)#6}}%
{, and \textup{#3(#1)#4--#5(#2)#6}}

\Crefformat{equation}{#2Equation~\textup{(#1)}#3}
\Crefrangeformat{equation}{Equations~\textup{#3(#1)#4--#5(#2)#6}}
\Crefmultiformat{equation}{Equations~\textup{#2(#1)#3}}{ and \textup{#2(#1)#3}}
{, \textup{#2(#1)#3}}{, and \textup{#2(#1)#3}}
\Crefrangemultiformat{equation}{Equations~\textup{#3(#1)#4--#5(#2)#6}}%
{ and \textup{#3(#1)#4--#5(#2)#6}}{, \textup{#3(#1)#4--#5(#2)#6}}%
{, and \textup{#3(#1)#4--#5(#2)#6}}

\crefdefaultlabelformat{#2\textup{#1}#3}
\newcommand{\vertiii}[1]{{\left\vert\kern-0.25ex\left\vert\kern-0.25ex\left\vert #1
    \right\vert\kern-0.25ex\right\vert\kern-0.25ex\right\vert}}

\usepackage{color}
\definecolor{dmagenta}{rgb}{.4,.1,.4}
\definecolor{dblue}{rgb}{.0,.0,.5}
\definecolor{mblue}{rgb}{.0,.0,.7}
\definecolor{ddblue}{rgb}{.0,.0,.4}
\definecolor{dred}{rgb}{.9,.0,.0}
\definecolor{dgreen}{rgb}{.0,.5,.0}
\definecolor{Eeom}{rgb}{.0,.0,.5}
\definecolor{dbrown}{rgb}{.6,.0,.0}

\allowdisplaybreaks

\makeatletter
\renewcommand\subsection{\@startsection{subsection}{2}%
  \z@{-3.25ex\@plus-1ex \@minus-.2ex}%
  {1.5ex\@plus.2ex}%
  {\normalfont\normalsize\bfseries}}
\makeatother
\makeatletter
\renewcommand\section{\@startsection{section}{1}%
  \z@{-3.5ex \@plus -1ex \@minus -.2ex}%
  {2.3ex \@plus .2ex}%
  {\normalfont\large\bfseries}}
\makeatother
\newcommand{\ttl}{\Large Existence, uniqueness and Large time behavior of a viscous conservation law with discontinuous flux} 
\begin{document}
\title[Large time behavior]
{\ttl}

\author{Smritikana Pal$^1$}
\author{Manas Ranjan Sahoo$^2$}

\address{$^1$ $^2$ School of Mathematical Sciences, National Institute of Science Education and Research,  Bhubaneswar, P.O. Jatni, Khurda, Odisha 752050, India. An OCC of Homi Bhabha National Institute, Anushaktinagar, Mumbai, Maharashtra, India - 400 094. Email: smritikana.pal@niser.ac.in, manas@niser.ac.in}



\begin{abstract}
In this paper, we study a viscous conservation law with discontinuous flux. We define a weak solution concept, show its existence via an explicit formula, and prove that the weak solution is unique. In addition, we obtain the large-time behavior of the solution. The asymptotic limit of the solution is a steady-state solution. Here, the large-time asymptotic governed by the convection terms either converges to a constant steady state or to a non-constant steady state, depending on the behavior of the flux functions at a threshold value, which is the meeting point of two fluxes. This phenomenon does not occur in the single-flux case.
\end{abstract}
\keywords{viscous conservation law with discontinuous flux; large time behavior; weak solution, fractional differential equations}
 \subjclass[2020]{35C05, 35B40, 35D30}

\maketitle
\maketitle
\tableofcontents

\section{Introduction and Main results}
The pioneering work of Hopf \cite{hopf} addressed several important questions for the Burgers equation. He obtained the large-time behavior of the viscous Burgers equation and the inviscid Burgers equation. Later, this important paper opened many research frontiers. Basically, the study of large-time behavior was done by replacing the Burgers-type flux by a general flux, see \cites{Dix2002,Cole1951, HattoriNishihara1985, GallayWayne2002, KimTzavaras2001, Dalibard, Needham}. Some of the recent works in this direction, we would like to cite \cite{GhoulMasmoudiPacherie2025} where it is proved that for Burgers-type equations, the nonlinear transport term creates stronger mixing effect. E. Pacherie \cite{Pacherie2024}, develops a hierarchy of asymptotic profiles in the infinite mass setting. Huang–Qiu–Wang–Yang\cite{Huang} studied the nonlinear asymptotic stability of a non-self-similar rarefaction wave for the two-dimensional viscous Burgers equation. There is ample literature on the study of conservation laws with discontinuous flux without viscosity; see \cite{MR2028700, MR2323047} and references therein. The large-time asymptotic for viscous conservation laws with discontinuous fluxes is not well studied. Viscous conservation laws with discontinuous flux read:
\begin{equation}\label{IVP for CDF}
   \begin{split}
      u_t+\Big[(1-H(x)) f(u)+H(x)g(u)\Big]_x-u_{xx}&=0,~~~~~~~x\in\mathbb{R}, t>0\\
       u(x,0)&=u_0(x),~~x\in\mathbb{R}
   \end{split}
\end{equation} 
where $H$ is the Heaviside function and $f, g:\mathbb{R} \to \mathbb{R}$ are $C^2$ functions. 

The above equation describes a simplified model of two-phase flow in heterogeneous porous media with capillary diffusion, and studying its long-time behavior reveals how the saturation evolves toward equilibrium. This equation can also be used to understand transport through heterogeneous porous media. The flux discontinuity reflects abrupt changes in material properties, such as permeability or porosity, across interfaces separating different rock types. The viscous term accounts for physical diffusion due to capillary effects, molecular diffusion, or mechanical dispersion. Understanding the large-time behavior of solutions is important for predicting the long-term evolution of transport processes and the eventual equilibrium state of the system. In this direction, we would like to cite \cite{AndreianovCances2013, Kaasschieter1999, SpaydShearer2011} for the Buckley-Leverett equation. 

The study of the large-time behavior of the solution of the equation \eqref{IVP for CDF} is the general aim. This problem presents several challenges. In particular, it is necessary to formulate an appropriate notion of a solution, establish its existence and uniqueness, and investigate its large-time behavior. Chung et al. \cite{chung} studied the existence, uniqueness, and large time behavior for the Burgers equation with stationary point source, namely
\begin{equation*}
    u_t+uu_x=u_{xx}+\delta.
\end{equation*}

This equation can be considered as a particular form of \eqref{IVP for CDF} for $f(u)=\frac{u^2}{2}$ and $g(u)=\frac{u^2}{2}-1.$  Note that in this case, the two fluxes $f(u)=\frac{u^2}{2}$ and $g(u)=\frac{u^2}{2}-1$ do not agree at any point. If we consider two fluxes that agree at a single point, the problem is different and will exhibit different asymptotic behavior depending on the meeting point. The aim of this paper is to consider such a case. As a starting point, we intend to establish the existence, uniqueness, and large-time asymptotics of the following equation (with $f(u)=\frac{u^2}{2}$ and $g(u)=\frac{(u+\alpha)^2}{2}$) where $f(u)$ and $g(u)$ agree at $u=-\frac{\alpha} {2}.$ 

\begin{equation}\label{IVP}
   \begin{split}
      u_t+\Big[(1-H(x))\frac{u^2}{2}+H(x)\frac{(u+\alpha)^2}{2}\Big]_x-u_{xx}&=0,~~~~~~~x\in\mathbb{R}, t>0\\
       u(x,0)&=u_0(x),~~x\in\mathbb{R}
   \end{split}
\end{equation} 
where $H$ is the Heaviside function, $\alpha \neq 0$ and $u_0 \in L^1_{loc}(\mathbb{R}).$

To obtain the weak solution, we first study two initial-boundary value problems posed on the first and second quadrants, respectively. Using the solutions of these auxiliary problems, we construct an intermediate function $G(t)$ that connects these two problems. Taking the Hopf-Cole type transformation, we obtain an explicit formula described below.

\begin{theorem}
\label{explicit solution}
  The explicit solution of equation \eqref{IVP} is given by   
  \begin{equation}\label{explicit formula}
    \begin{aligned}
        u(x,t)=\begin{cases}
        -2\frac{\phi_x(x,t)}{\phi(x,t)},&x<0, ~~t>0\\
   -2\frac{\psi_x(x,t)}{\psi(x,t)}-\alpha, &x>0, ~~ t>0
    \end{cases}
    \end{aligned} 
    \end{equation}
    where $\phi,\psi,\phi_x$ and $\psi_x$ are as follows:
    \begin{equation*}
        \begin{aligned}
            &\phi(x,t)=\frac{-1}{2\sqrt{\pi t}}\int_0^\infty \Big\{e^{-\frac{(x-\xi)^2}{4t}}-e^{-\frac{(x+\xi)^2}{4t}}\Big\}\phi_0(-\xi)d\xi-\frac{x}{2\sqrt{\pi}}\int_0^te^{-\frac{x^2}{4 (t-\tau)}}\frac{G(\tau)}{(t-\tau)^{\frac{3}{2}}}d\tau\\
            & \psi(x,t)=\frac{1}{2\sqrt{\pi t}}\int_0^\infty \Big\{e^{-\frac{(x-\xi)^2}{4t}}-e^{-\frac{(x+\xi)^2}{4t}}\Big\}\phi_0(\xi)d\xi+\frac{x}{2\sqrt{\pi}}\int_0^te^{-\frac{x^2}{4(t-\tau)}}\frac{G(\tau)}{(t-\tau)^{\frac{3}{2}}}d\tau\\
            & \phi_x(x,t)  =\frac{-1}{4\sqrt{\pi}(t)^{\frac{3}{2}}}\int_0^{\infty}\phi_0(-\xi)\Big\{(\xi-x)e^{-\frac{(x-\xi)^2}{4t}}+(x+\xi)e^{-\frac{(x+\xi)^2}{4t}}\Big\}d\xi\\&+\int_0^t\frac{G^{\prime}(\tau)}{\sqrt{\pi(t-\tau)}}e^{\frac{-x^2}{4(t-\tau)} }d\tau+\frac{G(0)}{\sqrt{\pi t}}e^{\frac{-x^2}{4t}}\\
            &  \psi_x(x,t) =\frac{1}{4\sqrt{\pi}(t)^{\frac{3}{2}}}\int_0^{\infty}\phi_0(\xi)\Big\{(\xi-x)e^{-\frac{(x-\xi)^2}{4t}}+(x+\xi)e^{-\frac{(x+\xi)^2}{4t}}\Big\}d\xi\\&-\int_0^t\frac{G^{\prime}(\tau)}{\sqrt{\pi(t-\tau)}}e^{\frac{-x^2}{4(t-\tau)} }d\tau-\frac{G(0)}{\sqrt{\pi t}}e^{\frac{-x^2}{4t}}.
        \end{aligned}
    \end{equation*}
    Here 
    \begin{equation*}
        \begin{aligned}
            G(t)&=\frac{1}{\sqrt{\pi}}\int_0^t\int_0^{\infty}\frac{\xi e^{\frac{-\xi^2}{4s}}\Big(\phi_0(-\xi)+\phi_0(\xi)\Big)}{4(s)^{\frac{3}{2}}\sqrt{t-s}}\Big[\frac{1}{\sqrt{\pi}}+\frac{\alpha\sqrt{t-s}}{4}e^{\frac{\alpha^2(t-s)}{16}}\operatorname{erfc}\Big(\frac{-\alpha\sqrt{t-s}}{4}\Big)\Big]d\xi ds,
        \end{aligned}
    \end{equation*}
   and $$\phi_0(x)=\begin{cases}
    exp\Big(-\frac{1}{2}\int_0^{x}u_0(y)dy\Big)&x<0\\
    exp\Big(-\frac{1}{2}\int_0^{x}\Big[u_0(y)+\alpha\Big] dy\Big)&x>0.
\end{cases}$$
The explicit formula satisfies the weak formulation \eqref{integral identity}.
\end{theorem}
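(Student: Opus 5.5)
Setting $v=u$ for $x<0$ and $w=u+\alpha$ for $x>0$ turns \eqref{IVP} into the viscous Burgers equation $v_t+vv_x=v_{xx}$ on each half-line. The weak formulation \eqref{integral identity} forces two interface conditions at $x=0$: continuity of $u$ and continuity of the total flux $F(u)-u_x$. Continuity of $u$ reads $w(0+,t)=v(0-,t)+\alpha$. Since $f$ and $g$ differ exactly by $\alpha u+\alpha^2/2$, flux continuity reads $\tfrac{v^2}{2}-v_x=\tfrac{w^2}{2}-w_x$ at the interface.

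I apply the Hopf--Cole transformation $v=-2\phi_x/\phi$ on $x<0$ and $w=-2\psi_x/\psi$ on $x>0$. Then $\phi$ and $\psi$ solve the heat equation on their half-lines, and the initial data are $\phi(x,0)=\phi_0(x)$ and $\psi(x,0)=\phi_0(x)$ with $\phi_0$ as in the statement. Using $\tfrac{v^2}{2}-v_x = 2\phi_t/\phi$, flux continuity becomes $\phi_t/\phi=\psi_t/\psi$ at $x=0$. I normalise by taking $\phi(0,t)=-\psi(0,t)$, which is what the sign conventions in the formula suggest. Continuity of $u$ then becomes the Robin-type relation $\psi_x(0,t)+\phi_x(0,t)=-\tfrac{\alpha}{2}\psi(0,t)$.

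The plan is to write $\psi(0,t)=G(t)=-\phi(0,t)$ with $G$ unknown. I solve the two Dirichlet problems on the quarter planes, with boundary value $\pm G$, by the odd-reflection Green's function plus the double-layer potential $\frac{x}{2\sqrt\pi}\int_0^t e^{-x^2/4(t-\tau)}G(\tau)(t-\tau)^{-3/2}d\tau$. This produces exactly the stated formulas for $\phi$ and $\psi$. Differentiating in $x$ and integrating by parts in $\tau$ in the double-layer term moves the derivative onto $G$. That step produces the $G'$-integral and the boundary term $G(0)e^{-x^2/4t}/\sqrt{\pi t}$, giving the stated $\phi_x$ and $\psi_x$.

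Evaluating at $x=0^\mp$ and substituting into the Robin relation gives an Abel-type (fractional order $1/2$) integral equation for $G$:
$$-\frac{2}{\sqrt\pi}\int_0^t\frac{G'(\tau)}{\sqrt{t-\tau}}d\tau-\frac{2G(0)}{\sqrt{\pi t}}+\frac{1}{2\sqrt\pi t^{3/2}}\int_0^\infty \xi e^{-\xi^2/4t}\big(\phi_0(\xi)+\phi_0(-\xi)\big)d\xi=-\frac{\alpha}{2}G(t).$$
I solve this by Laplace transform. The left side involves $s^{1/2}\hat G(s)$, so one gets $\hat G=\hat h(s)/(2(\sqrt s-\alpha/4))$ up to constants, where $h$ is the data term. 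Inverting $1/(\sqrt s-\alpha/4)$ gives $\frac{1}{\sqrt{\pi t}}+\frac{\alpha}{4}e^{\alpha^2t/16}\operatorname{erfc}(-\alpha\sqrt t/4)$. The convolution with $h$ then yields the stated $G$. Reading this argument backwards serves as verification: the given $G$ solves the Abel equation, so $\phi$ and $\psi$ satisfy both interface conditions.

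Finally I verify \eqref{integral identity}. On each open half-plane $u$ is a classical solution, because $\phi$ and $\psi$ are smooth there and nonvanishing, since $\phi_0>0$ and positivity is preserved. I split the test-function integral into $x<0$ and $x>0$ and integrate by parts. The boundary terms at $x=0$ cancel because of continuity of $u$ and continuity of the flux, and the $t=0$ terms reproduce $u_0$, using $\phi\to\phi_0$ in $L^1_{loc}$.

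\textbf{Main obstacle.} I expect the hardest part to be justifying the analysis: regularity of $G$ (that $G'$ exists, is integrable near $0$, and that $G(0)$ makes sense), positivity of $-\phi$ and $\psi$ so the logarithmic derivatives are defined, and the limits $x\to0^\pm$ and $t\to0$ for merely $L^1_{loc}$ data. To handle these I would impose growth bounds on $\phi_0$ and use dominated convergence on the Gaussian kernels.
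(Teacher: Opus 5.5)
Your interface normalization $\phi(0,t)=-\psi(0,t)$ is wrong, and your derivation does not survive it. It is not what the stated formulas encode. As $x\to0^-$ the reflection term in $\phi$ vanishes, and $-\frac{x}{2\sqrt{\pi}}\int_0^t e^{-x^2/4(t-\tau)}G(\tau)(t-\tau)^{-3/2}\,d\tau\to G(t)$ because $-x=|x|$. So $\phi(0^-,t)=\psi(0^+,t)=+G(t)$. The minus signs in front of the two terms of $\phi$ only compensate for $x<0$ and for writing the data as $\phi_0(-\xi)$. Solving with boundary value $-G$ therefore does not produce the stated $\phi$.

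Nor can your normalization work. Since $\phi(\cdot,0)=\psi(\cdot,0)=\phi_0>0$ with $\phi_0(0^\pm)=1$, the condition $\phi(0,t)=-G(t)<0$ forces $\phi$ to vanish somewhere in $x<0$, where $u=-2\phi_x/\phi$ blows up. The ``positivity of $-\phi$'' you list is impossible.

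Your displayed Abel equation is in fact the correct one: it is the paper's condition $G=\frac{2}{\alpha}[\phi_x(0,t)-\psi_x(0,t)]$. But it does not follow from your Robin relation. If you insert the stated $\phi_x,\psi_x$ into $\phi_x(0,t)+\psi_x(0,t)=-\frac{\alpha}{2}G(t)$, the $G'$ and $G(0)$ terms cancel and no Abel equation remains. If instead you use boundary data $-G$ consistently, the data enter through $\phi_0(\xi)-\phi_0(-\xi)$ rather than $\theta(\xi)$. Your ``reading backwards'' verification inherits the same error.

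What is missing is the argument that fixes the constant. Flux continuity $\phi_t/\phi=\psi_t/\psi$ at $x=0$ only gives $\phi(0,t)=c\,\psi(0,t)$ with $c$ constant. This is not a free normalization, because the initial data already fix $\phi$ and $\psi$. Positivity requires $c>0$, and the weak formulation forces $c=1$. In the paper's route, the Hamilton--Jacobi equations for $U=-2\log\phi$ and $V=-2\log\psi-\alpha x$ are multiplied by $\eta_x$. The interface terms are then $\int_0^\infty (V-U)(0,t)\,\eta_t(0,t)\,dt=-2\log c\,\eta(0,0)$. So for $c\neq1$ the initial trace of $u$ would be $u_0-2\log c\,\delta_0$.

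With $\phi(0,t)=\psi(0,t)=G(t)$, continuity of $u$ reads $\phi_x(0,t)-\psi_x(0,t)=\frac{\alpha}{2}G(t)$, and your Abel equation follows. From there your route is sound. It differs from the paper's mainly in solving the Abel equation by Laplace transform. Since $\frac{1}{\sqrt{\pi}}\int_0^t\frac{G'(\tau)}{\sqrt{t-\tau}}\,d\tau+\frac{G(0)}{\sqrt{\pi t}}$ transforms to $\sqrt{s}\,\hat G$, the paper's hand computation that the coefficient of $G(0)$ vanishes becomes automatic. The cost is checking that $G$ and the data term have exponential order; for $\alpha>0$, $G$ does grow exponentially. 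The paper's potential-based verification is also slightly more economical than yours: it needs only the traces of $\phi,\psi$ at $x=0$, not traces of $u_x$.
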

Next, we prove the uniqueness; see theorem \eqref{uniqueness theorem}. To investigate the large-time behavior of the weak solution, we first analyze the asymptotic behavior of an intermediate function $G$ together with that of the solutions to the associated initial-boundary value problems. By combining these results with the explicit representation formula, we obtain the large-time asymptotic behavior of the weak solution.

{\bf For large-time behavior, $\alpha<0$ and $\alpha>0$ play important roles. The left and right fluxes meet each other at $u=-\frac{\alpha}{2}$. At this point, for $\alpha>0,$ the speed $(g^{\prime}(u)=u+\alpha)$ of the right flux is positive, and the speed $(f^{\prime}(u)=u)$ due to the left flux is negative. Herustically, at this point, the data are outgoing, and the smoothing effect $u_{xx}$ forces the solution to converge to a constant steady state. Since, for $\alpha < 0$, the data are incoming, this prevents the system from converging to a constant steady state. In this case, we obtain a non-constant steady state. The large-time asymptotic is continuous due to the smoothing effect.} 

The above is rigorously justified in the following two theorems under the assumption 
\begin{equation}
\label{extracondition}
\begin{aligned}
&\theta \in C^2(\mathbb{R}) \cap W^{2,\infty}(\mathbb{R})\\
  & u_0 \in L^1((-\infty,0)), \quad u_0+\alpha \in L^1((0,\infty))
  \end{aligned}
\end{equation} where $\theta(\xi):=\Big(\phi_0(-\xi)+\phi_0(\xi)\Big)$ for $\xi>0.$ 

For $\alpha<0$, as discussed above, we have the following theorem, where the solution approaches a non-constant steady state as $t\to \infty$. 
\begin{theorem}
\label{limit of solution}
    Let $u$ be the weak solution of \eqref{IVP} with an initial value $u_0.$ If condition \eqref{extracondition} is satisfied along with $\alpha<0$, then as $t\to \infty$ \begin{equation*}
        u(x,t)\to\begin{cases}
       \frac{-2\alpha\phi_0(-\infty) }{2\Big[\phi_0(-\infty)+\phi_0(\infty)\Big]+\alpha \phi_0(-\infty)x}, &x<0\\
        \frac{\alpha\Big[2\phi_0(-\infty)-\alpha \phi_0(\infty)x\Big]}{\alpha \phi_0(\infty)x-2\Big[\phi_0(-\infty)+\phi_0(\infty)\Big]}, &x>0.
    \end{cases}
    \end{equation*}
\end{theorem}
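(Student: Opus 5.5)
The plan is to read off the limit directly from the explicit representation in Theorem~\ref{explicit solution}. Since $u=-2\phi_x/\phi$ for $x<0$ and $u=-2\psi_x/\psi-\alpha$ for $x>0$, it is enough to find a common rate $r(t)$ such that $r(t)\phi(x,t)$, $r(t)\phi_x(x,t)$, $r(t)\psi(x,t)$ and $r(t)\psi_x(x,t)$ converge locally uniformly in $x$ to nontrivial limits. Those limits should be affine in $x$ with constant derivative: $\phi\sim A_-+B_-x$ and $\psi\sim A_++B_+x$. The claimed profile has exactly the form $-2B_-/(A_-+B_-x)$ on the left and $-2B_+/(A_++B_+x)-\alpha$ on the right. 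I expect $r(t)=\sqrt t$.

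Two inputs from \eqref{extracondition} are needed.
\begin{itemize}
\item[(i)] Since $u_0\in L^1(-\infty,0)$ and $u_0+\alpha\in L^1(0,\infty)$, the limits $\phi_0(\pm\infty)$ exist, are finite and positive, and $\theta(\xi)\to\phi_0(-\infty)+\phi_0(\infty)$.
\item[(ii)] The regularity $\theta\in W^{2,\infty}$ lets us differentiate $G$ and integrate by parts in $\xi$, so that $G'$ is controlled.
\end{itemize}

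\textbf{Step 1: asymptotics of $G$.} For $\alpha<0$ the argument of $\operatorname{erfc}$ is $|\alpha|\sqrt{t-s}/4\to+\infty$. Using
\[
e^{z^2}\operatorname{erfc}(z)=\frac{1}{\sqrt\pi\,z}\Bigl(1-\frac{1}{2z^2}+O(z^{-4})\Bigr),
\]
the bracket $\tfrac{1}{\sqrt\pi}+\tfrac{\alpha\sqrt{t-s}}{4}e^{\alpha^2(t-s)/16}\operatorname{erfc}(\cdot)$ has its leading terms cancel, and what remains is of order $(t-s)^{-1}$ for large $t-s$ and bounded near $s=t$. The inner $\xi$-integral equals $\sqrt{\pi}\,\theta(\infty)\cdot\frac{1}{2\sqrt{s}}\cdot(1+o(1))$ after the substitution $\xi=2\sqrt{s}\,\eta$. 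I would then split the $s$-integral into $s<t/2$ and $s>t/2$ and use dominated convergence. The goal is to show that $\sqrt t\,G(t)\to G_\infty$ and $t^{3/2}G'(t)\to -G_\infty/2$, with $G_\infty$ an explicit multiple of $\theta(\infty)/|\alpha|$.

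\textbf{Step 2: the four terms of the representation.} The initial-data parts behave as follows:
\begin{itemize}
\item the odd-reflection term in $\phi$ is $O(|x|/t)\cdot\phi_0(-\infty)$, so it is $o(t^{-1/2})$ at fixed $x$;
\item its contribution to $\phi_x$ equals $\phi_0(-\infty)/\sqrt{\pi t}+o(t^{-1/2})$, and symmetrically for $\psi$.
\end{itemize}
The double-layer terms $\mp\frac{x}{2\sqrt\pi}\int_0^t e^{-x^2/4(t-\tau)}G(\tau)(t-\tau)^{-3/2}d\tau$ are handled by substituting $\sigma=x^2/4(t-\tau)$. This shows they equal $\mp G(t)$ up to a remainder, and by Step 1 the remainder is $o(t^{-1/2})$ at fixed $x$. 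In the derivative formulas, Step 1 gives that $\int_0^t G'(\tau)(t-\tau)^{-1/2}d\tau+G(0)t^{-1/2}$ (the fractional half-derivative of $G$) is $o(t^{-1/2})$ or an explicit multiple of $t^{-1/2}$; I must determine which. Collecting terms gives $A_\mp=\mp G_\infty$ and $B_\mp$ equal to $\pm\phi_0(\mp\infty)/\sqrt\pi$ plus the half-derivative constant.

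\textbf{Step 3: identify the constants.} I would insert the constants and check that they reproduce the stated formula. The interface condition built into $G$ (continuity of $u$ and of the flux at $x=0$, which is what produced the $\operatorname{erfc}$ kernel) should force the relation $A/B=2(\phi_0(-\infty)+\phi_0(\infty))/(\alpha\phi_0(-\infty))$ on the left, and the analogous one on the right. A consistency check is that the two limits satisfy $-2B_-/A_-=-2B_+/A_+-\alpha$ at $x=0$; this is indeed true of the stated profile, since both sides equal $-\alpha\phi_0(-\infty)/(\phi_0(-\infty)+\phi_0(\infty))$.

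\textbf{Main obstacle.} The hardest step will be Step 1, together with the half-derivative term in Step 2. The leading terms in the bracket cancel exactly when $\alpha<0$, so the true decay rate of $G$, and its constant, come from the next-order term of the $\operatorname{erfc}$ asymptotics interacting with the slow $s^{-1/2}$ decay of the inner integral. I would first try to write $G$ as a Laplace convolution and do a Tauberian-type analysis near the origin of the Laplace variable. The transform of the bracket has an explicit form involving $1/(\sqrt p+\alpha/4)$-type factors, which makes the small-$p$ behaviour, and hence the $t^{-1/2}$ coefficient, transparent. I would then justify the passage back to $t$ with the $W^{2,\infty}$ bound on $\theta$.
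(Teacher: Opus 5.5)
Your overall plan matches the paper's: prove $\sqrt t\,G(t)\to G_\infty$, find the limits of $\sqrt t\,\phi$, $\sqrt t\,\phi_x$, $\sqrt t\,\psi$, $\sqrt t\,\psi_x$, and take ratios. However, Step~2 misjudges the one term that produces the $x$-dependence of the profile.

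\textbf{The initial-data term is of order $t^{-1/2}$.} If $\phi_0(-\xi)\equiv c$, the first term of \eqref{formula for phi} is exactly $-c\operatorname{erf}\bigl(x/(2\sqrt t)\bigr)\approx -cx/\sqrt{\pi t}$. That is of order $|x|t^{-1/2}$, not $|x|/t$; when linearising the difference of Gaussians you dropped the factor $\int_0^\infty\xi e^{-\xi^2/4t}\,d\xi=2t$. So $\sqrt t$ times this term tends to $-\phi_0(-\infty)x/\sqrt\pi$, which is the entire linear part $B_-x$ of the limit. The double-layer term only gives the constant: after your substitution $\sigma=x^2/4(t-\tau)$, its $\sqrt t$-rescaling tends to $\Gamma(\tfrac12)G_\infty/\sqrt\pi=G_\infty$ for every $x$. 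As written, your accounting makes $\sqrt t\,\phi$ converge to a constant while $\sqrt t\,\phi_x$ converges to a nonzero constant. That is self-contradictory, since the term vanishes at $x=0$, so its rescaled limit is $x$ times that of its derivative. It would also make $u$ tend to a constant on each half-line.

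\textbf{Sign errors.} The initial-data part of $\sqrt t\,\phi_x$ tends to $-\phi_0(-\infty)/\sqrt\pi$, not $+$. Both double-layer terms tend to $+G(t)$, because the prefactors $\mp x$ are positive on their respective half-lines; this is consistent with $\phi(0,t)=\psi(0,t)=G(t)$. The correct values are $A_-=A_+=G_\infty$ and $B_\mp=\mp\phi_0(\mp\infty)/\sqrt\pi$. With your signs the left-hand ratio survives, since both $A_-$ and $B_-$ flip, but the right-hand one does not, and your own matching check at $x=0$ fails.

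\textbf{The two open quantities.} These must be computed, not inferred from what the interface condition ``should force''. First, no asymptotics of $G'$ are needed, and nothing you propose establishes $t^{3/2}G'(t)\to-G_\infty/2$. Integrating by parts in $\tau$, as the paper does, the $G$-part of $\phi_x$ becomes
\[
\frac{x^2}{4\sqrt\pi}\int_0^t\frac{e^{-x^2/4\tau}}{\tau^{5/2}}\,G(t-\tau)\,d\tau-\frac{1}{2\sqrt\pi}\int_0^t\frac{e^{-x^2/4\tau}}{\tau^{3/2}}\,G(t-\tau)\,d\tau .
\]
Split $\sqrt t=(\sqrt t-\sqrt{t-\tau})+\sqrt{t-\tau}$ and apply dominated convergence using only $\sqrt t\,G(t)\to G_\infty$. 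The two pieces tend to $G_\infty/|x|$ and $-G_\infty/|x|$, which cancel, so the half-derivative contributions vanish.

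Second, the constant is $G_\infty=-\frac{2}{\alpha\sqrt\pi}\bigl(\phi_0(-\infty)+\phi_0(\infty)\bigr)$. It follows from $\sqrt t\,f_1(t)\to\frac12\bigl(\phi_0(-\infty)+\phi_0(\infty)\bigr)$ and the exact identity $\int_0^\infty s^{-1/2}H(-\alpha\sqrt s/4)\,ds=\frac4\alpha\int_0^\infty h'(s)\,ds=-\frac4\alpha$. Here $H(z)=\frac{1}{\sqrt\pi}-ze^{z^2}\operatorname{erfc}(z)$, and $h(0)=1$, $h(\infty)=0$ for $\alpha<0$. So neither the next-order $\operatorname{erfc}$ expansion nor a Laplace/Tauberian argument is needed. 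The split $\sqrt t=(\sqrt t-\sqrt{t-s})+\sqrt{t-s}$ with dominated convergence suffices.

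Finally, your inner $\xi$-integral carries a stray factor $\sqrt\pi$. It equals $s^{-1/2}\int_0^\infty\eta e^{-\eta^2}\theta(2\sqrt s\,\eta)\,d\eta\sim\bigl(\phi_0(-\infty)+\phi_0(\infty)\bigr)/(2\sqrt s)$. An extra $\sqrt\pi$ in $G_\infty$ would change $A/B$ and hence the limiting profile.
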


For $\alpha>0$, as discussed above, we have the following theorem, where the solution approaches a constant steady state as $t\to \infty$.
\begin{theorem}
\label{limit of solution 1}
    Let $u$ be the weak solution of \eqref{IVP} with an initial value $u_0.$ If   conditions \eqref{extracondition}  are satisfied along with $\alpha>0,$ then as $t\to \infty$ \begin{equation*}
            u(x,t)\to -\frac{\alpha}{2}, ~~~~~~~x\in \mathbb{R}.
    \end{equation*}
\end{theorem}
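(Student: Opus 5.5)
Using the explicit formula of Theorem \ref{explicit solution}, it suffices to show that for $\alpha>0$ and each fixed $x$, $\phi_x(x,t)/\phi(x,t)\to \alpha/4$ for $x<0$ and $\psi_x(x,t)/\psi(x,t)\to -\alpha/4$ for $x>0$. Then $u\to -2(\alpha/4)=-\alpha/2$ on the left and $u\to -2(-\alpha/4)-\alpha=-\alpha/2$ on the right. The mechanism is that for $\alpha>0$ the function $G(t)$ grows exponentially, like $e^{\alpha^2 t/16}$. This growth dominates every term in $\phi,\psi,\phi_x,\psi_x$ coming from the initial data, which remain bounded or decay algebraically under \eqref{extracondition}.

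\textbf{Step 1: asymptotics of $G$.} Write $G(t)=\int_0^t K(s)\,M(t-s)\,ds$, where
$$K(s)=\frac{1}{4\sqrt{\pi}s^{3/2}}\int_0^\infty \xi e^{-\xi^2/4s}\theta(\xi)\,d\xi,\qquad M(r)=\frac{1}{\sqrt{r}}\Big[\frac{1}{\sqrt\pi}+\frac{\alpha\sqrt r}{4}e^{\alpha^2 r/16}\operatorname{erfc}\big(-\tfrac{\alpha\sqrt r}{4}\big)\Big].$$
Since $\theta$ is bounded with limit $\theta(\infty)=\phi_0(-\infty)+\phi_0(\infty)$, the substitution $\xi=2\sqrt{s}\eta$ gives $K(s)\approx \theta(\infty)/\sqrt{\pi s}$ for large $s$. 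For $\alpha>0$, $\operatorname{erfc}(-\alpha\sqrt r/4)\to 2$, so $M(r)\sim \frac{\alpha}{2}e^{\alpha^2 r/16}$. Hence
$$G(t)=e^{\alpha^2t/16}\int_0^t K(s)\,e^{-\alpha^2 s/16}\,\tilde M(t-s)\,ds,\qquad \tilde M(r)\to \tfrac{\alpha}{2}.$$
By dominated convergence, $G(t)e^{-\alpha^2 t/16}\to C_G:=\frac{\alpha}{2}\int_0^\infty K(s)e^{-\alpha^2 s/16}\,ds$. This constant is positive and finite: $K$ is integrable near $0$ because the $C^2$ regularity of $\theta$ gives $K(s)\to\theta(0)/(2\sqrt{\pi s})$ there. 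The same argument applied to the derivative gives $G'(t)e^{-\alpha^2 t/16}\to \frac{\alpha^2}{16}C_G$. I would obtain this either by differentiating the convolution, or by writing the fractional ODE that $G$ satisfies, which is essentially $D^{1/2}$-type with characteristic root $\alpha/4$ for $\sqrt{\partial_t}$.

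\textbf{Step 2: the boundary-layer integrals.} For fixed $x$, substitute $\tau=t-r$:
$$\frac{x}{2\sqrt\pi}\int_0^t e^{-x^2/4(t-\tau)}\frac{G(\tau)}{(t-\tau)^{3/2}}\,d\tau\sim e^{\alpha^2 t/16}C_G\,\frac{x}{2\sqrt\pi}\int_0^\infty e^{-x^2/4r}\,r^{-3/2}e^{-\alpha^2 r/16}\,dr=\operatorname{sgn}(x)\,C_G\,e^{\alpha^2 t/16}\,e^{-\alpha|x|/4},$$
using the Laplace transform of the heat kernel in $t$. In the same way,
$$\int_0^t \frac{G'(\tau)}{\sqrt{\pi(t-\tau)}}e^{-x^2/4(t-\tau)}\,d\tau\sim \frac{\alpha^2}{16}C_G\,e^{\alpha^2 t/16}\cdot\frac{4}{\alpha}e^{-\alpha|x|/4}=\frac{\alpha}{4}C_G\,e^{\alpha^2 t/16}e^{-\alpha|x|/4}.$$
The terms from the initial data, and the $G(0)$ term, are $O(1)$ for $\phi,\psi$ because $\phi_0$ is bounded, and $O(t^{-1/2})$ for the derivatives. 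After division by $e^{\alpha^2 t/16}$ they vanish.

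\textbf{Step 3: conclusion.} For $x<0$ this gives $\phi\sim C_Ge^{\alpha^2t/16}e^{\alpha x/4}$ and $\phi_x\sim \frac{\alpha}{4}C_Ge^{\alpha^2t/16}e^{\alpha x/4}$, so $-2\phi_x/\phi\to -\alpha/2$. For $x>0$ the signs flip: $\psi\sim C_Ge^{\alpha^2t/16}e^{-\alpha x/4}$ and $\psi_x\sim -\frac{\alpha}{4}(\cdots)$, so $-2\psi_x/\psi-\alpha\to \alpha/2-\alpha=-\alpha/2$.

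\textbf{Main obstacle.} The delicate step is the justification of dominated convergence in Steps 1–2. This requires a uniform bound $|G(\tau)|+|G'(\tau)|\le Ce^{\alpha^2\tau/16}$ and control of the region $\tau$ near $t$. I would handle it by splitting $\int_0^t$ at $t/2$ and using the explicit integrability of $r^{-3/2}e^{-x^2/4r}e^{-\alpha^2 r/16}$. I would also need to confirm that $C_G>0$, which holds because $\theta>0$ since $\phi_0>0$.
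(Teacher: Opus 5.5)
Your proof is correct and follows the paper's overall strategy: exponential growth of $G$, dominated convergence against the Laplace transform of the heat kernel giving $e^{\mp\alpha x/4}$, and cancellation of the common growth factor in $\phi_x/\phi$ and $\psi_x/\psi$. Two points of comparison are worth noting.

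First, your normalization $e^{-\alpha^2t/16}G(t)\to C_G$ is the correct one, whereas the paper's Lemma~\ref{approximation of G-1} normalizes by $\sqrt{t}\,e^{-\alpha^2t/16}$. That lemma reuses the splitting $\sqrt{t}\,G=A+B$ from the case $\alpha<0$ and discards $A$. In your notation, $A=\int_0^tK(s)M(t-s)(\sqrt{t}-\sqrt{s})\,ds$. For $\alpha>0$ the weight $K(s)M(t-s)\approx\frac{\alpha}{2}e^{\alpha^2t/16}K(s)e^{-\alpha^2s/16}$ puts the mass at $s=O(1)$, where $\sqrt{t}-\sqrt{s}\approx\sqrt{t}$. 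Hence in fact $e^{-\alpha^2t/16}A\sim\sqrt{t}\,C_G\to\infty$. The paper's own example confirms this: for $\phi_0\equiv1$ it computes $G=h\sim 2e^{\alpha^2t/16}$. The theorem survives only because the same factor cancels in the ratio, and your Step 1 is what that lemma should say.

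Second, you obtain $\phi_x,\psi_x$ from the $G'$-representation, which forces you to also prove $e^{-\alpha^2t/16}G'(t)\to\frac{\alpha^2}{16}C_G$. The paper avoids this by integrating by parts in $\tau$, which is equivalent to differentiating the $G$-representation of $\phi$ in $x$. Then $\phi_x$ involves the kernels $\frac{x^2}{4\sqrt{\pi}}\tau^{-5/2}e^{-x^2/4\tau}$ and $-\frac{1}{2\sqrt{\pi}}\tau^{-3/2}e^{-x^2/4\tau}$ acting on $G$ itself, so only Step 1 is needed. If you keep your route, note that $G'(\tau)$ typically blows up like $c\,\tau^{-1/2}$ at $\tau=0$; for $\phi_0\equiv1$, $G'=h'=\frac{\alpha}{4\sqrt{\pi\tau}}+\frac{\alpha^2}{16}h$. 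So the uniform bound you propose must be weakened to $|G'(\tau)|\le C(1+\tau^{-1/2})e^{\alpha^2\tau/16}$, which your split at $t/2$ still absorbs.

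A harmless slip: $K(s)\sim\theta(\infty)/(2\sqrt{\pi s})$ as $s\to\infty$, not $\theta(\infty)/\sqrt{\pi s}$.
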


\begin{remark}
Consider a more general problem as follows:
    \begin{equation*}
   \begin{split}
      u_t+\Big[(1-H(x))\frac{(u+\alpha)^2}{2}+H(x)\frac{(u+\beta)^2}{2}\Big]_x-u_{xx}&=0,~~~~~~~x\in\mathbb{R}, t>0\\
       u(x,0)&=u_0(x),~~x\in\mathbb{R}
   \end{split}
\end{equation*} 
 where $H$ is the Heaviside function, $u_0$ is a measurable function and $\alpha,\beta  \neq 0.$ We introduce the change of variables $v=u+\alpha.$ Under this transformation, the problem reduces to an equivalent form of \eqref{IVP}. Consequently, it is enough to study \eqref{IVP}.
 \end{remark}

The remainder of the paper is structured as follows. In Section 2, we establish the existence and uniqueness of weak solutions to \eqref{IVP} and derive an explicit representation formula. In Sections 3 and 4, we investigate the large-time asymptotic behavior of an intermediate function $G$, which plays a crucial role in determining the asymptotic behavior of the weak solution. We show that the weak solution converges to a continuous steady-state solution. In addition, for a particular choice of initial data, we present numerical illustrations demonstrating the convergence of the explicit solution to the steady-state solution in the cases $\alpha<0$ and $\alpha>0$, respectively.
\section{Existence and uniqueness of solution}
First, we introduce a suitable concept of weak solution for the problem \eqref{IVP}, namely,
\begin{equation*}
   \begin{split}
      u_t+\Big[(1-H(x))\frac{u^2}{2}+H(x)\frac{(u+\alpha)^2}{2}\Big]_x-u_{xx}&=0,~~~~~~~x\in\mathbb{R}, t>0\\
       u(x,0)&=u_0(x),~~x\in\mathbb{R}.
   \end{split}
\end{equation*} 
\begin{definition}
A function $u\in C([0,\infty); H^1(\mathbb{R})\cap L^{\infty}(\mathbb{R}))$ is said to be a weak solution of equation \eqref{IVP} if the following integral identity holds
\begin{equation}
\begin{split}
     \label{integral identity}
    \int_{0}^{\infty} \int_{-\infty}^{\infty}\Big[ u\zeta_t + [(1-H(x))\frac{u^2}{2}+H(x)\frac{(u+\alpha)^2}{2}-u_x] \zeta_x\Big] dx dt \\+\int_{-\infty}^{\infty} u_0(x)\zeta(x,0) dx=0
    \end{split}
\end{equation}
for any test function $\zeta \in C_c^{\infty}(\mathbb{R}\times [0, \infty)).$
\end{definition}
\subsection{Existence of weak solution}

To construct a weak solution of \eqref{IVP}, we proceed in several steps, each formulated as a lemma. In the first lemma, we start with two boundary value problems for the heat equation.

\begin{lemma}
\label{solution for phi and psi}
Let $G:[0, \infty) \to \mathbb{R}$ be a $C^1$ function and $$\phi_0(x)=\begin{cases}
    exp\Big(-\frac{1}{2}\int_0^{x}u_0(y)dy\Big)&x<0\\
    exp\Big(-\frac{1}{2}\int_0^{x}\Big[u_0(y)+\alpha\Big] dy\Big)&x>0.
\end{cases}$$ Then the solutions of the initial and boundary value problems 
    \begin{equation*}
   \begin{cases}
        \phi_t-\phi_{xx}=0,~~ x<0,~ t>0\\
        \phi(x,0)=\phi_0(x),~~ x<0\\
        \phi(0,t)=G(t),~ t>0
   \end{cases}
\end{equation*} 

and 

\begin{equation*}
   \begin{cases}
        \psi_t-\psi_{xx}=0,~~x>0, t>0\\
        \psi(x,0)=\phi_0(x),~~x>0\\
        \psi(0,t)=G(t),~~t>0
   \end{cases}
\end{equation*}

are given by 
\begin{equation}
\label{formula for phi}
\phi(x,t)=\frac{-1}{2\sqrt{\pi t}}\int_0^\infty \Big\{e^{-\frac{(x-\xi)^2}{4t}}-e^{-\frac{(x+\xi)^2}{4t}}\Big\}\phi_0(-\xi)d\xi-\frac{x}{2\sqrt{\pi}}\int_0^te^{-\frac{x^2}{4 (t-\tau)}}\frac{G(\tau)}{(t-\tau)^{\frac{3}{2}}}d\tau
\end{equation} and 
\begin{equation}\label{formula for psi}
        \psi(x,t)=\frac{1}{2\sqrt{\pi t}}\int_0^\infty \Big\{e^{-\frac{(x-\xi)^2}{4t}}-e^{-\frac{(x+\xi)^2}{4t}}\Big\}\phi_0(\xi)d\xi+\frac{x}{2\sqrt{\pi}}\int_0^te^{-\frac{x^2}{4(t-\tau)}}\frac{G(\tau)}{(t-\tau)^{\frac{3}{2}}}d\tau.
\end{equation}
If $G(t)>0$ for $t>0,$ then $\phi(x,t)~~~, \psi(x,t)>0$ for $t>0.$
\end{lemma}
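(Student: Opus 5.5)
The plan is to verify directly that the two formulas solve the respective half-line problems, using the classical representation of the Dirichlet problem for the heat equation on a half-line. There are two pieces: an odd reflection of the initial data, which handles $\phi(\cdot,0)=\phi_0$ with zero boundary values, and a double-layer (boundary) potential, which carries the boundary datum $G$. I treat $\psi$ on $x>0$ in detail; $\phi$ on $x<0$ follows by the substitution $x\mapsto -x$. Writing $\tilde\phi(y,t)=\phi(-y,t)$ for $y>0$ turns the $\phi$-problem into the $\psi$-problem with initial data $\phi_0(-\cdot)$, and it explains the sign changes in \eqref{formula for phi}.

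\emph{Step 1 (heat equation).} For $x\neq 0$ and $t>0$, each kernel solves the heat equation, so differentiation under the integral sign gives $\psi_t-\psi_{xx}=0$:
\begin{itemize}
\item $\frac{1}{2\sqrt{\pi t}}e^{-(x\mp\xi)^2/4t}$ is a heat kernel;
\item $\frac{x}{2\sqrt{\pi}(t-\tau)^{3/2}}e^{-x^2/4(t-\tau)}=-2\partial_x\big(\frac{1}{2\sqrt{\pi(t-\tau)}}e^{-x^2/4(t-\tau)}\big)$ is a spatial derivative of a heat kernel.
\end{itemize}
Differentiation is justified because $\phi_0>0$ is locally bounded; I use $u_0\in L^1_{loc}$ and assume, as is implicit in the paper, that $\phi_0$ grows at most like $e^{c\xi^2}$, so the Gaussian integrals converge. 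The integrand of the boundary term is smooth and bounded on $[0,t]$ for $x\neq0$, since it decays like $e^{-x^2/4(t-\tau)}$ as $\tau\to t$.

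\emph{Step 2 (initial condition).} For fixed $x>0$ the boundary term tends to $0$ as $t\to0^+$, because $|G|$ is bounded near $0$ and $\sup_{0<s<t}s^{-3/2}e^{-x^2/4s}\to0$. The reflected term is the standard Dirichlet Green function representation, so it converges to $\phi_0(x)$ at points of continuity (in $L^1_{loc}$ in general), by the usual approximate-identity argument. The image part $e^{-(x+\xi)^2/4t}$ vanishes in the limit because $x+\xi\ge x>0$.

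\emph{Step 3 (boundary condition), which I expect to be the main technical point.} At $x=0$ the reflection term vanishes identically, so I must show that the double-layer term tends to $G(t)$ as $x\to0^+$. I would substitute $s=x^2/(4(t-\tau))$, so that $\tau=t-x^2/(4s)$ and $d\tau=\frac{x^2}{4s^2}ds$. The prefactors then collapse to $\frac{1}{\sqrt\pi}s^{-1/2}$, giving
\[
\frac{x}{2\sqrt\pi}\int_0^t e^{-\frac{x^2}{4(t-\tau)}}\frac{G(\tau)}{(t-\tau)^{3/2}}d\tau=\frac{1}{\sqrt\pi}\int_{x^2/4t}^{\infty}e^{-s}s^{-1/2}\,G\Big(t-\frac{x^2}{4s}\Big)ds .
\]
Since $G$ is continuous and bounded on $[0,t]$, dominated convergence and $\int_0^\infty e^{-s}s^{-1/2}ds=\sqrt\pi$ give the limit $G(t)$. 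For $\phi$ the prefactor $-x=|x|$ plays the same role, so the limit as $x\to0^-$ is also $G(t)$.

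\emph{Step 4 (positivity).} I would read positivity off the formulas directly, with no maximum principle needed. For $x>0$ and $\xi>0$ we have $(x-\xi)^2<(x+\xi)^2$, so the bracket in \eqref{formula for psi} is positive. Since $\phi_0>0$ (it is an exponential), the first term of $\psi$ is positive, and the second term is positive when $G>0$ because $x>0$. For $x<0$ the inequality reverses, so the bracket is negative. The prefactor $-\frac{1}{2\sqrt{\pi t}}$ then makes the first term of \eqref{formula for phi} positive, and $-x>0$ makes the second term positive. Hence $\phi,\psi>0$ for $t>0$, including at $x=0$ where both equal $G(t)>0$.
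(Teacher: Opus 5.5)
Your proposal is correct and follows the paper's route. The paper cites the classical half-line Dirichlet representation (odd reflection of the initial data plus a double-layer potential carrying $G$) from Kevorkian, and it reads positivity off the explicit formulas exactly as in your Step 4; your Steps 1--3 verify by hand what the paper takes from that reference, and your growth caveat is a slight imprecision: a bound $\phi_0(\xi)=O(e^{c\xi^2})$ with a fixed $c>0$ only makes the Gaussian integrals converge for $t<1/(4c)$, so for all $t>0$ you need such a bound for every $c>0$ (for instance $\phi_0$ bounded, which holds under the paper's later assumption $u_0\in L^1((-\infty,0))$, $u_0+\alpha\in L^1((0,\infty))$).
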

\begin{proof}
Applying the results [see \cite{Kevorkian} p. 18 and p. 22], we obtain \eqref{formula for phi} 
and \eqref{formula for psi}. Since $G(t)>0$ for $t>0$ and $\phi(x,0), \psi(x,0)>0,$ from the 
explicit formulae,  it is easy to check that $\phi(x,t)>0$ for $x<0, t>0$ and $\psi(x,t)>0$ for 
$x>0,\,\, t>0$.
\end{proof}
In the following lemma, we specify the unknown function $G(t)$ using a regularity condition. Later, we show that this regularity condition is required to obtain the solution of the problem \eqref{IVP} in the sense of the definition\eqref{integral identity}.
\begin{lemma}
\label{properties of G}
 Define $\theta(\xi):=\Big(\phi_0(-\xi)+\phi_0(\xi)\Big)$ for $\xi>0.$  If $\theta \in C^2(\mathbb{R})\cap W^{2,\infty}(\mathbb{R}),$ then there exists a unique positive $C^1$ function $G(t)$  such that $G(t)=\frac{2}{\alpha}\Big[\phi_x(0+,t)-\psi_x(0-,t)\Big].$ 
 The explicit formula for $G(t)$ is as follows:
 \begin{equation}
\label{formula for G}
G(t)=\frac{1}{\sqrt{\pi}}\int_0^t\int_0^{\infty}\frac{\xi e^{\frac{-\xi^2}{4s}}\Big(\phi_0(-\xi)+\phi_0(\xi)\Big)}{4(s)^{\frac{3}{2}}\sqrt{t-s}}\Big[\frac{1}{\sqrt{\pi}}+\frac{\alpha\sqrt{t-s}}{4}e^{\frac{\alpha^2(t-s)}{16}}\operatorname{erfc}\Big(\frac{-\alpha\sqrt{t-s}}{4}\Big)\Big]d\xi ds.
\end{equation}
\end{lemma}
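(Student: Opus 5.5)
The plan is to turn the defining relation $G(t)=\frac{2}{\alpha}\big[\phi_x(0+,t)-\psi_x(0-,t)\big]$ into a linear Volterra (Abel-type) equation for $G$, solve it by Laplace transform, and then read off positivity and regularity from the resulting kernel. Throughout, $\phi_x(0,t)$ is understood as the boundary value of the left solution and $\psi_x(0,t)$ as that of the right solution of Lemma \ref{solution for phi and psi}.

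\textbf{Step 1: the boundary fluxes.} Differentiate \eqref{formula for phi} and \eqref{formula for psi} in $x$. For the boundary double-layer terms, integrate by parts in $\tau$ to move the derivative onto $G$; this produces the expressions for $\phi_x,\psi_x$ listed in Theorem \ref{explicit solution}. Setting $x=0$ gives
\begin{equation*}
\phi_x(0,t)-\psi_x(0,t)=-F(t)+2\Big[\int_0^t\frac{G'(\tau)}{\sqrt{\pi(t-\tau)}}d\tau+\frac{G(0)}{\sqrt{\pi t}}\Big],
\end{equation*}
where
\begin{equation*}
F(t):=\frac{1}{2\sqrt{\pi}\,t^{3/2}}\int_0^\infty \xi\,\theta(\xi)\,e^{-\xi^2/4t}\,d\xi .
\end{equation*}
The bracket is exactly the Riemann--Liouville half derivative $D^{1/2}G$. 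Hence the condition on $G$ becomes
\begin{equation*}
\tfrac{\alpha}{2}\,G=2D^{1/2}G-F .
\end{equation*}

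\textbf{Step 2: solving by Laplace transform.} Since $\widehat{D^{1/2}G}(s)=\sqrt{s}\,\widehat G(s)$, we get
\begin{equation*}
\widehat G=\frac{\widehat F}{2(\sqrt s-\alpha/4)} .
\end{equation*}
The inverse transform of $(\sqrt s-a)^{-1}$ is the standard kernel
\begin{equation*}
K_a(t)=\frac{1}{\sqrt{\pi t}}+a\,e^{a^2t}\operatorname{erfc}(-a\sqrt t).
\end{equation*}
Therefore $G=\tfrac12 F*K_{\alpha/4}$, and writing out the convolution gives precisely \eqref{formula for G}. To make this rigorous without assuming growth bounds a priori, I would verify directly that this $G$ solves the Volterra equation. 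For uniqueness, I would apply the Abel operator $I^{1/2}$ to the homogeneous equation $2D^{1/2}G=\frac{\alpha}{2}G$. This yields a Volterra equation of the second kind, $G=\frac{\alpha}{4}I^{1/2}G$, with the integrable kernel $(t-\tau)^{-1/2}$. Iterating this equation (equivalently, a weakly singular Gronwall lemma) forces $G\equiv 0$.

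\textbf{Step 3: positivity.} Since $\phi_0>0$, we have $\theta>0$ and hence $F>0$, so it suffices that $K_{\alpha/4}>0$.
\begin{itemize}
\item For $\alpha>0$ both terms of $K_{\alpha/4}$ are positive.
\item For $\alpha<0$, write $b=|\alpha|/4$. Then $K=\frac{1}{\sqrt{\pi t}}-b\,e^{b^2t}\operatorname{erfc}(b\sqrt t)$, and positivity follows from the classical bound $e^{x^2}\operatorname{erfc}(x)<\frac{1}{x\sqrt\pi}$ for $x>0$.
\end{itemize}
Alternatively, positivity follows because $(\sqrt s+b)^{-1}$ is completely monotone.

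\textbf{Step 4: $C^1$ regularity (main obstacle).} The difficulty is that $F(t)\sim \theta(0)/\sqrt{\pi t}$ is singular at $t=0$, so differentiating $F*K$ naively fails. I would substitute $\xi=2\sqrt t\,\eta$ and Taylor expand $\theta$ to second order, using $\theta\in W^{2,\infty}$:
\begin{equation*}
F(t)=\frac{\theta(0)}{\sqrt{\pi t}}+\theta'(0)+O(\sqrt t),
\end{equation*}
with the remainder differentiable with derivative $O(t^{-1/2})$. The explicit part then convolves with $K$ into a function computable in closed form from Laplace transforms: the transform of $t^{-1/2}$ is $\sqrt{\pi/s}$, so this piece has transform $\sqrt{\pi}/\big(s^{1/2}(\sqrt s-a)\big)$. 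Its inverse is $e^{a^2t}\operatorname{erfc}(-a\sqrt t)$ up to constants, which is $C^1$ on $[0,\infty)$. The remainder is handled by writing
\begin{equation*}
\frac{d}{dt}(R*K)=R(0)K+R'*K,
\end{equation*}
and both terms are continuous because $R'$ and $K$ are both $O(t^{-1/2})$ near $0$, hence their convolution is continuous. I expect this decomposition near $t=0$ to require the most care.
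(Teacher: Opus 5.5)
Your argument is sound in its main lines and reaches the paper's formula by a somewhat cleaner route. The paper writes the interface condition as a Caputo equation ${}^CD_t^{1/2}G=\frac{\alpha}{4}G+\frac{f}{4}$ whose forcing $f$ still contains $-4G(0)/\sqrt{\pi t}$, and solves it with the Mittag-Leffler representation. It then needs a separate computation, using ${}^CD_t^{1/2}h=\frac{\alpha}{4}h$ for $h(t)=e^{\alpha^2t/16}\operatorname{erfc}(-\alpha\sqrt t/4)$, to show that the total coefficient of $G(0)$ vanishes.

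By reading the bracket as the Riemann--Liouville derivative, you absorb $G(0)/\sqrt{\pi t}$ from the outset. The Laplace transform then gives $G=\frac12F*K_{\alpha/4}$ immediately, and that cancellation is automatic. Your $K_{\alpha/4}$ is the paper's kernel $H$ in \eqref{convolution}, and your $\frac12F$ is the paper's $F$.

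You also give an actual uniqueness proof: $I^{1/2}D^{1/2}G=G$ for $G$ continuous at $0$, followed by iteration of $G=\frac{\alpha}{4}I^{1/2}G$. The paper leaves uniqueness implicit in the solution formula for the Caputo problem. Your positivity argument is the same as the paper's Step 3.

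In Step 4 you are right that $F$ is singular at $t=0$. The paper's integration by parts $F(t)=\frac{1}{\sqrt\pi}\int_0^\infty e^{-y^2}\theta'(2\sqrt t\,y)\,dy$ drops the boundary term $\frac{\theta(0)}{2\sqrt{\pi t}}=\frac{1}{\sqrt{\pi t}}$, so your decomposition is the more accurate one.

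The one thing to fix is your claim of regularity up to $t=0$, which is false.

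\begin{itemize}
\item The closed-form piece is $\theta(0)h_a(t)$ with $h_a(t)=e^{a^2t}\operatorname{erfc}(-a\sqrt t)$ and $a=\alpha/4\neq0$. Its derivative $h_a'(t)=\frac{a}{\sqrt{\pi t}}+a^2h_a(t)$ (compare \eqref{derivative of h}) blows up as $t\to0^+$.
\item The constant $\theta'(0)$ in your expansion of $F$ contributes $\theta'(0)K(t)\sim\theta'(0)/\sqrt{\pi t}$ to $(F*K)'$. If you leave it in $R$, this is exactly the $R(0)K$ term, so that term is not continuous at $0$ either.
\end{itemize}

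Collecting terms gives $G'(t)=\frac{\alpha/2+\theta'(0)}{2\sqrt{\pi t}}+O(1)$ as $t\to0^+$. For the paper's own example, $u_0=0$ on $x<0$ and $u_0=-\alpha$ on $x>0$, we have $\theta\equiv2$, hence $G=h$ and $G'(t)\sim\frac{\alpha}{4\sqrt{\pi t}}$.

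What your decomposition does establish is $G\in C([0,\infty))\cap C^1((0,\infty))$, with $G(0)=\theta(0)/2=1$ and $|G'(t)|\le Ct^{-1/2}$ near $0$. This matches what the paper actually claims in its Step 4 ($C^1$ on $(0,\infty)$), and it is all you need:
\begin{itemize}
\item $G$ is absolutely continuous, so the integration by parts in $\tau$ in your Step 1 is valid, and so is the identification of the bracket with $D^{1/2}G$.
\item The integrals $\int_0^tG'(\tau)(t-\tau)^{-1/2}\,d\tau$ converge.
\item Your uniqueness argument applies in this class.
\end{itemize}
State the conclusion of Step 4 in that form rather than as $C^1$ on $[0,\infty)$.
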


\begin{proof}
{ \bf Step 1:}The formulae \eqref{formula for phi}and \eqref{formula for psi} can be written as \begin{equation}\label{left explicit formula}
\begin{aligned}
    \phi(x,t)=&\frac{-1}{2\sqrt{\pi t}}\int_0^\infty \Big\{e^{-\frac{(x-\xi)^2}{4t}}-e^{-\frac{(x+\xi)^2}{4t}}\Big\}\phi_0(-\xi)d\xi+ \int_0^tG^{\prime}(\tau)\operatorname{erfc}\Big(\frac{-x}{2\sqrt{t-\tau}}\Big)d\tau\\&+G(0)\operatorname{erfc}\Big(\frac{-x}{2\sqrt{t}}\Big)
\end{aligned}
\end{equation} and  \begin{equation}\label{right explicit formula}
\begin{aligned}
    \psi(x,t)=&\frac{1}{2\sqrt{\pi t}}\int_0^\infty \Big\{e^{-\frac{(x-\xi)^2}{4t}}-e^{-\frac{(x+\xi)^2}{4t}}\Big\}\phi_0(\xi)d\xi+\int_0^tG^{\prime}(\tau) \operatorname{erfc}\Big(\frac{x}{2\sqrt{t-\tau}}\Big)d\tau\\&+G(0)\operatorname{erfc}\Big(\frac{x}{2\sqrt{t}}\Big).
\end{aligned}
\end{equation}

{\bf Step 2:}
From \eqref{left explicit formula} and \eqref{right explicit formula} we get the following.
 \begin{equation}\label{definition 6}
    \begin{split}
        \phi_x(x,t)&
        =\frac{-1}{4\sqrt{\pi}(t)^{\frac{3}{2}}}\int_0^{\infty}\phi_0(-\xi)\Big\{(\xi-x)e^{-\frac{(x-\xi)^2}{4t}}+(x+\xi)e^{-\frac{(x+\xi)^2}{4t}}\Big\}d\xi\\&+\int_0^t\frac{G^{\prime}(\tau)}{\sqrt{\pi(t-\tau)}}e^{\frac{-x^2}{4(t-\tau)} }d\tau+\frac{G(0)}{\sqrt{\pi t}}e^{\frac{-x^2}{4t}}
    \end{split}
\end{equation}
and,
\begin{equation}\label{definition 5}
    \begin{split}
        \psi_x(x,t)&
        =\frac{1}{4\sqrt{\pi}(t)^{\frac{3}{2}}}\int_0^{\infty}\phi_0(\xi)\Big\{(\xi-x)e^{-\frac{(x-\xi)^2}{4t}}+(x+\xi)e^{-\frac{(x+\xi)^2}{4t}}\Big\}d\xi\\&-\int_0^t\frac{G^{\prime}(\tau)}{\sqrt{\pi(t-\tau)}}e^{\frac{-x^2}{4(t-\tau)} }d\tau-\frac{G(0)}{\sqrt{\pi t}}e^{\frac{-x^2}{4t}}.
    \end{split}
\end{equation}
Letting $x\to 0+$ in \eqref{definition 6} and $x\to 0-$ in \eqref{definition 5}, we get
\begin{equation*}
    \begin{split}
        \phi_x(0+,t)&=\frac{-1}{2\sqrt{\pi}(t)^{\frac{3}{2}}}\int_0^{\infty}\xi \phi_0(-\xi)e^{\frac{-\xi^2}{4t}}d\xi+\int_0^tG^{\prime}(\tau)\frac{1}{\sqrt{\pi(t-\tau)}}d\tau+\frac{G(0)}{\sqrt{\pi t}}
    \end{split}
\end{equation*}
and 
\begin{equation*}
\begin{split}
      \psi_x(0-,t)&=\frac{1}{2\sqrt{\pi}(t)^{\frac{3}{2}}}\int_0^{\infty}\xi\phi_0(\xi) e^{\frac{-\xi^2}{4t}}d\xi-\int_0^tG^{\prime}(\tau)\frac{1}{\sqrt{\pi(t-\tau)}}d\tau-\frac{G(0)}{\sqrt{\pi t}}.
\end{split}
\end{equation*}

Equating $G(t)=\frac{2}{\alpha}\Big[\phi_x(0+,t)-\psi_x(0-,t)\Big],$ we get
\begin{equation}\label{equality 1}
   \begin{aligned}
         &G(t)=
         \frac{2}{\alpha}\Big[\frac{-1}{2\sqrt{\pi}(t)^{\frac{3}{2}}}\int_0^{\infty}\xi e^{\frac{-\xi^2}{4t}}\Big(\phi_0(-\xi)+\phi_0(\xi)\Big)d\xi+2\int_0^t\frac{G^{\prime}(\tau)}{\sqrt{\pi(t-\tau)}}d\tau+2\frac{G(0)}{\sqrt{\pi t}}\Big].
   \end{aligned}
\end{equation}
So in the Caputo derivative sense, \eqref{equality 1} can be written as follows \begin{equation}\label{caputo DE}
    \begin{aligned}
        {}^CD_t^{\frac{1}{2}}G(t)=\frac{\alpha}{4}G(t)+\frac{f(t)}{4}
    \end{aligned}
\end{equation} where $f(t)=\frac{1}{\sqrt{\pi}t^{\frac{3}{2}}}\int_0^{\infty}\xi e^{\frac{-\xi^2}{4t}}\Big(\phi_0(-\xi)+\phi_0(\xi)\Big)d\xi-4\frac{G(0)}{\sqrt{\pi t}}.$

\vspace{0.9pt}

\noindent
Therefore, the explicit formula for \eqref{caputo DE} is given by 
\begin{equation}\label{equlity 2}
    \begin{aligned}
       G(t)&=G(0)E_{\frac{1}{2}}\Big(\frac{\alpha\sqrt{t}}{4}\Big)+\int_0^t\frac{f(s)}{4\sqrt{t-s}}E_{\frac{1}{2},\frac{1}{2}}\Big(\frac{\alpha\sqrt{t-s}}{4}\Big)ds\\&=G(0)\Big[e^{\frac{\alpha^2t}{16}}\operatorname{erfc}\Big(\frac{-\alpha\sqrt{t}}{4}\Big)\Big]+\int_0^t\frac{f(s)}{4\sqrt{t-s}}\Big[\frac{1}{\sqrt{\pi}}+\frac{\alpha\sqrt{t-s}}{4}e^{\frac{\alpha^2(t-s)}{16}}\operatorname{erfc}\Big(\frac{-\alpha\sqrt{t-s}}{4}\Big)\Big]ds.
    \end{aligned}
\end{equation} The last equality follows from \begin{equation*}
    \begin{aligned}
        &E_{\frac{1}{2}}(z)=e^{z^2}\operatorname{erfc}(-z)\\&
        E_{\frac{1}{2},\frac{1}{2}}(z)=\frac{1}{\sqrt{\pi}}+ze^{z^2}\operatorname{erfc}(-z)
        \end{aligned}
\end{equation*} where $E_{\frac{1}{2}}$ and $ E_{\frac{1}{2},\frac{1}{2}}$ are Mittag-Leffler functions.
Putting the value of $f$ in \eqref{equlity 2} and rearranging the terms, we get the following.

\begin{equation}\label{equality 3}
   \begin{aligned}
   &G(t)=\\&G(0)\Bigg[e^{\frac{\alpha^2t}{16}}\operatorname{erfc}\Big(\frac{-\alpha\sqrt{t}}{4}\Big)-\frac{1}{\sqrt{\pi}}\int_0^t\frac{1}{\sqrt{s(t-s)}}\Big[\frac{1}{\sqrt{\pi}}+\frac{\alpha\sqrt{t-s}}{4}e^{\frac{\alpha^2(t-s)}{16}}\operatorname{erfc}\Big(\frac{-\alpha\sqrt{t-s}}{4}\Big)\Big]ds\Bigg]\\&+\frac{1}{\sqrt{\pi}}\int_0^t\int_0^{\infty}\frac{\xi e^{\frac{-\xi^2}{4s}}\Big(\phi_0(-\xi)+\phi_0(\xi)\Big)}{4(s)^{\frac{3}{2}}\sqrt{t-s}}\Big[\frac{1}{\sqrt{\pi}}+\frac{\alpha\sqrt{t-s}}{4}e^{\frac{\alpha^2(t-s)}{16}}\operatorname{erfc}\Big(\frac{-\alpha\sqrt{t-s}}{4}\Big)\Big]d\xi ds.
   \end{aligned}
\end{equation}
We now show that the coefficient of $G(0)$ is $0$. To this end, define $h(t):=e^{\frac{\alpha^2t}{16}}\operatorname{erfc}\Big(\frac{-\alpha\sqrt{t}}{4}\Big).$ Observe that
\begin{equation*}
    \begin{aligned}
        \frac{d}{ds}(h(t-s))&=\frac{d}{ds}\Big[e^{\frac{\alpha^2(t-s)}{16}}\operatorname{erfc}\Big(\frac{-\alpha\sqrt{t-s}}{4}\Big)\Big]\\&=-\frac{\alpha}{4\sqrt{\pi (t-s)}}-\frac{\alpha^2}{16}e^{\frac{\alpha^2(t-s)}{16}}\operatorname{erfc}\Big(\frac{-\alpha\sqrt{t-s}}{4}\Big)\\&=-\frac{\alpha}{4\sqrt{t-s}}\Big[\frac{1}{\sqrt{\pi}}+\frac{\alpha\sqrt{t-s}}{4}e^{\frac{\alpha^2(t-s)}{16}}\operatorname{erfc}\Big(\frac{-\alpha\sqrt{t-s}}{4}\Big)\Big].
    \end{aligned}
\end{equation*} 
Then, 
\begin{equation}\label{equality 4}
    \begin{aligned}
     \text{Coefficient of $G(0)$}
        &=h(t)+\frac{4}{\alpha\sqrt{\pi}}\int_0^t\frac{1}{\sqrt{s}}\frac{d}{ds}(h(t-s))ds\\&=h(t)-\frac{4}{\alpha\sqrt{\pi}}\int_0^t\frac{1}{\sqrt{s}}h^{\prime}(t-s)ds\\&=h(t)-\frac{4}{\alpha\sqrt{\pi}}\int_0^t\frac{1}{\sqrt{t-u}}h^{\prime}(u)du\\&=h(t)-\frac{4}{\alpha}~{}^CD_t^{\frac{1}{2}}h(t).
    \end{aligned}
\end{equation}

Now we show that $h(t)=\frac{4}{\alpha}~{}^CD_t^{\frac{1}{2}}h(t).$ First observe that \begin{equation}\label{derivative of h}
    \begin{aligned}
        h^{\prime}(t)&=\frac{\alpha}{4\sqrt{\pi t}}+\frac{\alpha^2}{16}e^{\frac{\alpha^2t}{16}}\operatorname{erfc}\Big(\frac{-\alpha\sqrt{t}}{4}\Big)\\&=\frac{\alpha}{4\sqrt{\pi t}}+\frac{\alpha^2}{16}h(t).
        \end{aligned}
\end{equation} Then, by definition, we get \begin{equation}\label{caputo derivative}
    \begin{aligned}
        {}^CD_t^{\frac{1}{2}}h(t)&=\frac{1}{\sqrt{\pi}}\int_0^t\frac{1}{\sqrt{t-u}}h^{\prime}(u)du\\&=\frac{1}{\sqrt{\pi}}\int_0^t\frac{1}{\sqrt{t-u}}\Big[\frac{\alpha}{4\sqrt{\pi u}}+\frac{\alpha^2}{16}h(u)\Big]du\\&=\frac{\alpha^2}{16\sqrt{\pi}}\int_0^t\frac{h(u)}{\sqrt{t-u}}du+\frac{\alpha}{4\pi}\int_0^t\frac{1}{\sqrt{u(t-u)}}du.
    \end{aligned}
\end{equation} Now to calculate the first term in \eqref{caputo derivative} we use the identity $$\frac{1}{\sqrt{\pi}}\int_0^t\frac{f(u)}{\sqrt{t-u}}du=\frac{1}{a}(f(t)-1)~~~ \text{if}~~ f=e^{a^2t}\operatorname{erfc}(-a\sqrt{t}).$$ So we get 
\begin{equation}
\label{calculation}
    \frac{\alpha^2}{16\sqrt{\pi}}\int_0^t\frac{h(u)}{\sqrt{t-u}}du=\frac{\alpha}{4}(h(t)-1).
\end{equation} 

Therefore, from \eqref{caputo derivative} and the above calculation we get $ {}^CD_t^{\frac{1}{2}}h(t)=\frac{\alpha}{4}h(t).$ Hence, from \eqref{equality 4} $\text{Coefficient of $G(0)$}=0.$
So from \eqref{equality 3}, we get $$G(t)=\frac{1}{\sqrt{\pi}}\int_0^t\int_0^{\infty}\frac{\xi e^{\frac{-\xi^2}{4s}}\Big(\phi_0(-\xi)+\phi_0(\xi)\Big)}{4(s)^{\frac{3}{2}}\sqrt{t-s}}\Big[\frac{1}{\sqrt{\pi}}+\frac{\alpha\sqrt{t-s}}{4}e^{\frac{\alpha^2(t-s)}{16}}\operatorname{erfc}\Big(\frac{-\alpha\sqrt{t-s}}{4}\Big)\Big]d\xi ds.$$ 

\noindent
 {\bf Step 3:} In this step, we show that $G(t)>0$ for $t>0$. 
It is enough to show that
$$\Big[\frac{1}{\sqrt{\pi}}+\frac{\alpha\sqrt{t-s}}{4}e^{\frac{\alpha^2(t-s)}{16}}\operatorname{erfc}\Big(\frac{-\alpha\sqrt{t-s}}{4}\Big)\Big]>0$$ 
for $t>0.$  This follows from the following trailing calculations:
Observe that for $y>0$ 

 \begin{equation*}
    \begin{aligned}
        &\frac{2}{\sqrt{\pi}}\int_y^{\infty}e^{-z^2}dz <\frac{2}{\sqrt{\pi}}\int_y^{\infty}\frac{z}{y}e^{-z^2}dz=-\frac{1}{\sqrt{\pi}y}\int_y^{\infty}\frac{d}{dz}e^{-z^2}dz=\frac{1}{\sqrt{\pi}y}e^{-y^2}.
    \end{aligned}
\end{equation*} 
This implies \[y e^{y^2} \operatorname{erfc}(y)<\frac{1}{\sqrt{\pi}}.\]
For $\alpha < 0$, choosing
\[
y = \frac{-\alpha \sqrt{t - s}}{4},
\]
the claim in \textbf{Step 3} follows. For $\alpha>0,$ $G(t)$ is strictly positive, which is obvious. \\

\noindent
{\bf Step 4:} In this step, we prove that $G$ is a $C^1$ function on $(0, \infty).$
To this end, we can rewrite 
\begin{equation}
\label{convolution}
    G(t)=\int_0 ^{t} F(s) H(t-s) ds=F*H(t),
\end{equation}
where 
\begin{equation*}
\begin{aligned}
F(t) &= \frac{1}{\sqrt{\pi}} \int_0^{\infty}\frac{\xi e^{\frac{-\xi^2}{4t}}\Big(\phi_0(-\xi)+\phi_0(\xi)\Big)}{4t^{\frac{3}{2}}} d\xi \\
H(t) &= \frac{1}{\sqrt{t}}\Big[\frac{1}{\sqrt{\pi}} + \frac{\alpha \sqrt{t}}{4} e^{\frac{\alpha^2 t}{16}} \operatorname{\operatorname{erfc}}(-\frac{\alpha \sqrt{t}}{4})\Big].
\end{aligned}
\end{equation*}

Using the notation $\theta(\xi)=\Big(\phi_0(-\xi)+\phi_0(\xi)\Big)$ and applying integration by parts $F(t)$  can be written as 
\begin{equation*}
\begin{aligned}
   F(t) &= \frac{1}{\sqrt{\pi}} \int_0^{\infty}\frac{\xi e^{\frac{-\xi^2}{4t}}}{4t^{\frac{3}{2}}} \theta(\xi)\, d\xi \\
   &=\frac{1}{\sqrt{\pi}} \int_0^{\infty} e^{-y^2} \theta'(2\sqrt{t}y)\, dy.
\end{aligned}
\end{equation*}
If $\theta\in C^2(\mathbb{R})\cap W^{2,\infty}(\mathbb{R}),$  then the above expression for $F$ implies $F\in C^1 (0, \infty).$ Then the equation \eqref{convolution} implies $G\in C^1 (0, \infty)$.

\end{proof}

To obtain a weak solution of \eqref{IVP}, we consider the following two problems in the left and right half planes, respectively.

{\bf Problem 1:}
\begin{equation}\label{split problem left}
    \begin{aligned}
         U_t+\frac{U_x^2}{2}-U_{xx}&=0,~~~~~~~~ x<0, t>0\\
    U(x,0)&=\int_0^{x}u_0(y)dy\\
    U(0,t)&=g(t)
    \end{aligned}
\end{equation}

{\bf Problem 2:}

\begin{equation*}
    \begin{aligned}
        V_t+\frac{(V_x+\alpha)^2}{2}-V_{xx}&=0,~~~~~~~~ x>0, t>0\\
    V(x,0)&=\int_0^{x}u_0(y)dy\\
    V(0,t)&=g(t)
    \end{aligned}
\end{equation*}

Here, the function $g$ is unknown and can be specified by imposing a boundary regularity condition. The existence of $U$ and $V$ is given in the following theorem.

\begin{lemma}
\label{Hopf-Cole transformation}
Define $U=-2\log \phi$ and $V=-2\log \psi-\alpha x$ where $\phi$ and $\psi$ are given by \eqref{formula for phi} and \eqref{formula for psi} respectively. Then $U,V$ satisfy problem 1 and problem 2, respectively. Further $U\in C^{\infty}\big((-\infty, 0)\times (0, \infty)\big)$, $V\in C^{\infty} \big((0, \infty)\times (0, \infty)\big)$ and $U_x (0+, t)=V_x(0-,t).$
\end{lemma}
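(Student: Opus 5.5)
The plan is to verify the three claims separately: the PDEs, the smoothness, and the matching of the first derivatives at the interface. For the first two, by Lemma \ref{solution for phi and psi} and Lemma \ref{properties of G}, $G$ is positive and $C^1$, hence $\phi>0$ on $(-\infty,0)\times(0,\infty)$ and $\psi>0$ on $(0,\infty)\times(0,\infty)$. Thus $U=-2\log\phi$ and $V=-2\log\psi-\alpha x$ are well defined there. Both $\phi$ and $\psi$ solve the heat equation in an open quadrant, so interior parabolic regularity (or direct differentiation under the integral in \eqref{formula for phi}, \eqref{formula for psi}, where the Gaussian kernels are smooth for $x\neq 0$, $t>0$) gives $\phi,\psi\in C^\infty$ in the respective open quadrants. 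Hence $U,V$ are $C^\infty$ there.

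Next I will check the equations by the classical Hopf--Cole computation. From $U=-2\log\phi$ we get
\[
U_x=-2\phi_x/\phi,\qquad U_{xx}=-2\phi_{xx}/\phi+2\phi_x^2/\phi^2,\qquad U_t=-2\phi_t/\phi,
\]
so
\[
U_t+\tfrac12U_x^2-U_{xx}=-\tfrac{2}{\phi}(\phi_t-\phi_{xx})=0.
\]
For $V$, note that $V_x+\alpha=-2\psi_x/\psi$ and $V_{xx}=-2\psi_{xx}/\psi+2\psi_x^2/\psi^2$, so the identical computation gives
\[
V_t+\tfrac12(V_x+\alpha)^2-V_{xx}=-\tfrac{2}{\psi}(\psi_t-\psi_{xx})=0.
\]
The initial conditions follow from $\phi(x,0)=\phi_0(x)$: for $x<0$ we have $-2\log\phi_0(x)=\int_0^x u_0$. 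For $x>0$ we have $-2\log\phi_0(x)-\alpha x=\int_0^x(u_0+\alpha)-\alpha x=\int_0^x u_0$, interpreting the initial value in the sense of limits as $t\to0$ at Lebesgue points. The boundary conditions hold with the common value $g(t)=-2\log G(t)$, since $\phi(0,t)=\psi(0,t)=G(t)$. This is exactly where positivity of $G$ from Step 3 of Lemma \ref{properties of G} is used.

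The main step, and the only one with content, is the interface condition. Its one-sided limits are to be read as $U_x(0^-,t)$ from the left and $V_x(0^+,t)$ from the right; the ``$0+$'' and ``$0-$'' in the statement and in Lemma \ref{properties of G} are the paper's labels for these limits. We have
\[
U_x(0^-,t)=-\frac{2\phi_x(0^-,t)}{G(t)},\qquad V_x(0^+,t)=-\frac{2\psi_x(0^+,t)}{G(t)}-\alpha,
\]
so $U_x=V_x$ at the interface is equivalent to
\[
\alpha G(t)=2\bigl[\phi_x(0,t)-\psi_x(0,t)\bigr],
\]
which is precisely the defining relation of $G$ in Lemma \ref{properties of G}. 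The obstacle is to justify that the boundary traces $\phi_x(0^\mp,t)$, $\psi_x(0^\pm,t)$ exist and equal the expressions obtained by setting $x=0$ in \eqref{definition 6} and \eqref{definition 5}.

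To justify this, I will use the representations \eqref{left explicit formula} and \eqref{right explicit formula}, whose derivatives are \eqref{definition 6} and \eqref{definition 5}. The Duhamel term $\int_0^t G'(\tau)(\pi(t-\tau))^{-1/2}e^{-x^2/4(t-\tau)}\,d\tau$ converges as $x\to0$ by dominated convergence, because $G'$ is continuous and the kernel is dominated by the integrable $(t-\tau)^{-1/2}$. Near $\tau=0$ the integrability of $G'$ must be checked; from the convolution form \eqref{convolution}, $F$ is bounded, since $\theta'$ is bounded. The initial-data integrals converge for fixed $t>0$ because $\phi_0$ is locally bounded with at most the growth allowed by $\theta\in W^{2,\infty}$ and the Gaussian weight. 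Substituting the limits and using \eqref{equality 1} then gives $U_x=V_x$ at $x=0$ for every $t>0$.
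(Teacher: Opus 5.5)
Your argument is the paper's argument. It uses positivity of $\phi,\psi$ from $G>0$, then the Hopf--Cole computation, then the observation that matching $U_x$ and $V_x$ at $x=0$ is equivalent to $\alpha G(t)=2[\phi_x(0,t)-\psi_x(0,t)]$, the defining relation of $G$ in Lemma~\ref{properties of G}. Your treatment of the initial and boundary data (with $g=-2\log G$) and your reading of the one-sided labels are correct, and more careful than the paper.

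One claim is false as written. It sits in your extra justification of the traces: you say $F$ in \eqref{convolution} is bounded because $\theta'$ is. The integration by parts in Step 4 of Lemma~\ref{properties of G}, which you rely on, drops the boundary term. With $k(t):=1/\sqrt{\pi t}$ and $\theta(0)=2$, one actually has
\[
F=\tfrac{\theta(0)}{2}\,k+F_1=k+F_1,\qquad F_1(t):=\frac{1}{\sqrt{\pi}}\int_0^\infty e^{-y^2}\theta'(2\sqrt{t}\,y)\,dy .
\]
So $F$ is unbounded at $0$. In the paper's own example $\phi_0\equiv1$ one gets $F=k$, $G=h$ and $G'(t)=\frac{\alpha}{4\sqrt{\pi t}}+\frac{\alpha^2}{16}h(t)$, which is unbounded near $t=0$. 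A bound on $F$ alone would not control $G'$ anyway.

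What your dominated-convergence step needs is only $G'\in L^1(0,1)$, and that is true. Take $H=k+\frac{\alpha}{4}h$, note $k*k=1$, and note $\frac{\alpha}{4}\,k*h=h-1$ by \eqref{calculation}. Then $G=h+F_1*H$ and
\[
G'(t)=h'(t)+F_1(0)H(t)+(F_1'*H)(t),\qquad |F_1'(t)|\le\frac{\|\theta''\|_\infty}{2\sqrt{\pi t}},
\]
which gives $|G'(\tau)|\le C\tau^{-1/2}$ for small $\tau$. Hence $|G'(\tau)|(\pi(t-\tau))^{-1/2}\in L^1(0,t)$ and the Duhamel term converges as $x\to0$. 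The rest of your argument then goes through, with $G(0)$ read as $G(0^+)=1$.
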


\begin{proof}
By lemma \eqref{solution for phi and psi} and  \eqref{properties of G}  $\phi, \psi>0$. If we take $U=-2\log \phi$ and $V=-2\log \psi-\alpha x,$ then $U$ satisfies problem 1 and $V$ satisfies problem 2.

    Observe that \begin{equation*}
    \begin{aligned}
        &U_x(0+,t)=-2\frac{\phi_x(0+,t)}{\phi(0+,t)}\\&
        V_x(0-,t)=-2\frac{\psi_x(0-,t)}{\psi(0-,t)}-\alpha.
    \end{aligned}
\end{equation*}
Therefore, $U_x(0+,t)=V_x(0-,t)$ if and only if $G(t)=\frac{2}{\alpha}\Big[\phi_x(0+,t)-\psi_x(0-,t)\Big].$ By the previous lemma, the result follows.

\end{proof}

\begin{lemma}
\label{regularity}
    If  the condition \eqref{extracondition} is satisfied then the explicit solution $u$ of \eqref{IVP} lies in the class  $C([0,\infty); H^1(\mathbb{R})\cap L^{\infty}(\mathbb{R})).$
\end{lemma}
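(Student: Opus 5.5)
The plan is to work directly with the Hopf--Cole representation \eqref{explicit formula}. On each half-line I will bound $\phi,\psi$ from below by a positive constant and bound $\phi_x,\psi_x,\phi_{xx},\psi_{xx}$ from above, locally uniformly in $t$. I will then deduce the $L^\infty$ and $H^1$ bounds for $u=-2\phi_x/\phi$ on $x<0$ and $u=-2\psi_x/\psi-\alpha$ on $x>0$, and finally continuity in time.

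\textbf{Positive lower bounds and $L^\infty$ bounds.} From \eqref{extracondition}, $u_0\in L^1((-\infty,0))$ and $u_0+\alpha\in L^1((0,\infty))$. Hence $\phi_0$ is bounded above and below by positive constants, $0<m\le\phi_0\le M$, and the limits $\phi_0(\pm\infty)$ exist. Write $\phi$ in the form \eqref{left explicit formula}. It is the solution of the heat equation on the half-line with initial data in $[m,M]$ and boundary data $G(t)>0$, with $G$ continuous by Lemma \ref{properties of G}. The maximum principle then gives
\[
\min\Big(m,\min_{[0,T]}G\Big)\le\phi\le \max\Big(M,\max_{[0,T]}G\Big)\quad\text{on }(-\infty,0)\times[0,T],
\]
and the same holds for $\psi$. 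One point needs care: $\min_{[0,T]}G$ must be positive including near $t=0$. I intend to show $G(t)\to G(0^+)=\tfrac12(\phi_0(0^-)+\phi_0(0^+))=\theta(0)/2>0$ using $F(t)=\pi^{-1/2}\int_0^\infty e^{-y^2}\theta'(2\sqrt t y)\,dy$. If the convolution form instead gives $G(0^+)=0$, I will rework the lower bound near $t=0$ by writing $\phi$ as a weighted average of $\phi_0$ and $G$.

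For the gradients, I will use the formulas \eqref{definition 6} and \eqref{definition 5}. The first term is an odd or even reflection of $\phi_0$ convolved with the derivative of the heat kernel. After integration by parts in $\xi$ it becomes a heat kernel acting on $\phi_0'=-\tfrac12u_0\phi_0$, plus a boundary term $\phi_0(0^\mp)e^{-x^2/4t}/\sqrt{\pi t}$. The boundary term combines with $G(0)/\sqrt{\pi t}$. Because $\theta\in W^{2,\infty}$, $G\in C^1$, so the Duhamel term $\int_0^t G'(\tau)(t-\tau)^{-1/2}e^{-x^2/4(t-\tau)}\,d\tau$ is bounded. The $L^\infty$ bound on $u$ therefore reduces to controlling $e^{t\partial_{xx}}(u_0\phi_0)$ together with the singular $t^{-1/2}$ terms.

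\textbf{$H^1$ bound.} The $L^2$ norm of $u$ on $x<0$ is bounded by $\|\phi_x\|_{L^2}/\inf\phi$. Next, $u_x=-2\phi_{xx}/\phi+2\phi_x^2/\phi^2$, and $\phi_{xx}=\phi_t$. I will use Young's inequality for the heat semigroup on the pieces built from $u_0\phi_0\in L^1\cap L^\infty$. I will also use explicit Gaussian integrals of the kernel terms $(t-\tau)^{-1/2}e^{-x^2/4(t-\tau)}$, whose $L^2_x$ norm is $O((t-\tau)^{-1/4})$ and therefore integrable. A cleaner alternative is to note that $u$ solves the viscous equation on each side with matching traces $u(0^\pm)$ by Lemma \ref{Hopf-Cole transformation}. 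Since the continuity $U_x(0^+)=V_x(0^-)$ holds, an energy estimate with test function $u$ gives $\frac{d}{dt}\|u\|_2^2+\|u_x\|_2^2\lesssim$ a flux jump term at $x=0$. The same argument for $u_x$ would give $H^1$.

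\textbf{Continuity in time and main obstacle.} Continuity of $t\mapsto u(t)$ in $H^1\cap L^\infty$ for $t>0$ follows from smoothness and dominated convergence in the explicit integrals. The main obstacle is $t\to0$. There the kernel terms behave like $t^{-1/2}$, and with $u_0$ merely in $L^1\cap L^\infty$ one cannot expect $u(t)\to u_0$ in $H^1$ unless $u_0\in H^1$. I therefore plan to interpret $\theta\in C^2\cap W^{2,\infty}$ as giving $\phi_0$ a $W^{2,\infty}$-type regularity on each side, hence $u_0\in W^{1,\infty}$ locally, with decay supplied by the $L^1$ assumption. The cancellation between the boundary term $\phi_0(0^\mp)e^{-x^2/4t}/\sqrt{\pi t}$ and $G(0)e^{-x^2/4t}/\sqrt{\pi t}$ is the delicate step. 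If the cancellation is exact, continuity at $0$ follows from standard heat semigroup continuity on $\phi_0'$. Otherwise I would show continuity in $L^2\cap L^\infty_{loc}$ and $H^1$-continuity for $t>0$, which is what the weak formulation actually uses.
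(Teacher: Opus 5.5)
Your route (integrate by parts onto $\phi_0'$, then estimate the Gaussian pieces in $L^2_x$ by Minkowski/Young) is the paper's. But the target cannot be reached: the lemma is false as stated, and your plan runs into this at the $L^2$ step on the right half-line.

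On $(0,\infty)$ we have $u=-2\psi_x/\psi-\alpha$. Your own estimate therefore gives $\|u(\cdot,t)+\alpha\|_{L^2((0,\infty))}\le 2\|\psi_x(\cdot,t)\|_{L^2((0,\infty))}/\inf\psi<\infty$. Since $\alpha\neq0$, this forces $u(\cdot,t)\notin L^2(\mathbb{R})$. Concretely, $\psi(x,t)\to\phi_0(+\infty)>0$ and $\psi_x(x,t)\to0$, so $u(x,t)\to-\alpha$ as $x\to+\infty$. At $t=0$, the paper's own example ($u_0=0$ on $(-\infty,0)$, $u_0=-\alpha$ on $(0,\infty)$, $\theta\equiv2$) satisfies \eqref{extracondition} with $u_0\notin L^2(\mathbb{R})$. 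Note also that \eqref{extracondition} gives $u_0+\alpha\in L^1((0,\infty))$, not $u_0\in L^1$ as you read it.

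The paper's proof makes the same slip: it passes from $\psi_x(\cdot,t)\in L^2((0,\infty))$ to $u(\cdot,t)\in L^2(\mathbb{R})$, losing the $-\alpha$. It also controls $-2\phi_x/\phi$ using only upper bounds on $\phi$. Your lower bound fills exactly that gap: you write $\phi$ as an average of values of $\phi_0$ and $G$ against positive kernels of total mass $\operatorname{erf}\big(|x|/(2\sqrt t)\big)+\operatorname{erfc}\big(|x|/(2\sqrt t)\big)=1$.

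What your estimates do give, for each $t>0$ and continuously in $t>0$, is $u\in L^\infty(\mathbb{R})$, $u_x\in L^2(\mathbb{R})$, $u\in L^2((-\infty,0))$ and $u+\alpha\in L^2((0,\infty))$. Equivalently, $u-\bar u\in H^1(\mathbb{R})$ for a fixed smooth $\bar u$ equal to $0$ near $-\infty$ and to $-\alpha$ near $+\infty$. That is the correct statement to aim for, and it is what the uniqueness argument actually needs, since that argument only involves $u-v$.

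At $t=0$ your fallback is likewise a different, weaker statement, and the reinterpretation you propose is not available. The hypothesis $\theta\in C^2\cap W^{2,\infty}$ constrains only $\phi_0(-\xi)+\phi_0(\xi)$: take $\phi_0(\mp\xi)=1\pm w(\xi)$ with $w(0)=0$, $|w|<1$ and $w$ rough, and then $\theta\equiv2$. So it yields no regularity of $u_0$. Continuity into $H^1\cap L^\infty$ at $t=0$ would need $u_0-\bar u\in H^1$ and $u_0$ continuous, because for $t>0$ the function $u(\cdot,t)$ is continuous across $x=0$ by the interface condition. Hence at $t=0$ only weak continuity can be claimed, which is all \eqref{integral identity} uses.

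Your two hedges, by contrast, resolve in your favour.
\begin{itemize}
\item Write $G=F*H$ as in Step 4 of Lemma \ref{properties of G}. The paper's formula $F(t)=\pi^{-1/2}\int_0^\infty e^{-y^2}\theta'(2\sqrt t\,y)\,dy$ drops the boundary term of the integration by parts. Correctly, $F(t)=\frac{\theta(0)}{2\sqrt{\pi t}}+F_1(t)$, with $F_1$ that integral and $\theta(0)=2$.
\item Since $\frac{1}{\sqrt{\pi t}}*H=h$ (the computation in the paper's example), $G=h+F_1*H$. Hence $G(0^+)=1=\theta(0)/2$ and $\min_{[0,T]}G>0$.
\item The terms $-\frac{\phi_0(0^-)}{\sqrt{\pi t}}e^{-x^2/4t}$ and $\frac{G(0)}{\sqrt{\pi t}}e^{-x^2/4t}$ in $\phi_x$ then cancel exactly, and likewise in $\psi_x$.
\item You should replace ``$G\in C^1$'' by $|G'(t)|\le Ct^{-1/2}$ on $(0,T]$. $G'$ is in general unbounded at $0$: in the example $G=h$ and $h'=\frac{\alpha}{4\sqrt{\pi t}}+\frac{\alpha^2}{16}h$. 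The Duhamel term still stays bounded, because $\int_0^t\tau^{-1/2}(t-\tau)^{-1/2}\,d\tau=\pi$.
\end{itemize}
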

\begin{proof}
 Define 
    \begin{equation*}
        u(x,t)=\begin{cases}
      U_x(x,t)&x<0, ~~t>0\\
      V_x(x,t)&x>0, ~~ t>0.
  \end{cases}
  \end{equation*}
    Using the definition of $u,U$ and $V$ from lemma \eqref{Hopf-Cole transformation} we get 
    $$u(x,t)=\begin{cases}
        -2\frac{\phi_x(x,t)}{\phi(x,t)},&x<0, ~~t>0\\
   -2\frac{\psi_x(x,t)}{\psi(x,t)}-\alpha, &x>0, ~~ t>0.
    \end{cases}$$
    We claim that $\phi(.,t):(-\infty,0)\to \mathbb{R}$ is bounded for fixed $t>0.$ Indeed, using change of variable in the first term of \eqref{left explicit formula} we get the following.
    \begin{equation*}
        \begin{aligned}
                |\phi(x,t)| &\leq \frac{M}{\sqrt{\pi}}\Big[\int_{-\infty}^{\frac{x}{2\sqrt{t}}}e^{-y^2}dy+\int^{\infty}_{\frac{x}{2\sqrt{t}}}e^{-y^2}dy \Big]\\&+\int_0^t|G^{\prime}(\tau)|\operatorname{erfc}\Big(\frac{-x}{2\sqrt{t-\tau}}\Big)d\tau+G(0)\operatorname{erfc}\Big(\frac{-x}{2\sqrt{t}}\Big)
        \end{aligned}
    \end{equation*} 
    for some constant $M.$ Since $G$ is a $C^1$ function, $\phi$ is bounded. Hence, it  suffices to show that $\phi_x(.,t),\phi_{xx}(.,t) \in L^2((-\infty,0)) .$ 
    From \eqref{definition 6} we can write the following :
    \begin{equation}\label{derivative of phi}
        \begin{aligned}
            \phi_x(x,t)&=  \frac{-1}{2\sqrt{\pi t}}\int_0^\infty \frac{\partial}{\partial x} \Big\{e^{-\frac{(x-\xi)^2}{4t}}-e^{-\frac{(x+\xi)^2}{4t}}\Big\}\phi_0(-\xi)d\xi\\&+\int_0^t\frac{G^{\prime}(\tau)}{\sqrt{\pi(t-\tau)}}e^{\frac{-x^2}{4(t-\tau)} }d\tau+\frac{G(0)}{\sqrt{\pi t}}e^{\frac{-x^2}{4t}}\\
    &= \frac{1}{2\sqrt{\pi t}}\int_0^\infty \frac{\partial}{\partial \xi} \Big\{e^{-\frac{(x-\xi)^2}{4t}}+e^{-\frac{(x+\xi)^2}{4t}}\Big\}\phi_0(-\xi)d\xi\\&+\int_0^t\frac{G^{\prime}(\tau)}{\sqrt{\pi(t-\tau)}}e^{\frac{-x^2}{4(t-\tau)} }d\tau+\frac{G(0)}{\sqrt{\pi t}}e^{\frac{-x^2}{4t}}.
        \end{aligned}
    \end{equation}
    Now  applying integration by parts to the first term of \eqref{derivative of phi} we get
 \begin{equation*}
     \begin{aligned}
             \phi_x(x,t)&= -\frac{1}{2\sqrt{\pi t}}\int_0^\infty \Big\{e^{-\frac{(x-\xi)^2}{4t}}+e^{-\frac{(x+\xi)^2}{4t}}\Big\}\frac{\partial}{\partial \xi} \Big(e^{-\frac{U(-\xi,0)}{2}}\Big)d\xi-\frac{1}{\sqrt{\pi t}}e^{-\frac{x^2}{4t}}\\&+\frac{x^2}{4\sqrt{\pi}}\int_0^te^{-\frac{x^2}{4 (t-\tau)}}\frac{G(\tau)}{(t-\tau)^{\frac{5}{2}}}d\tau-\frac{1}{2\sqrt{\pi}}\int_0^te^{-\frac{x^2}{4 (t-\tau)}}\frac{G(\tau)}{(t-\tau)^{\frac{3}{2}}}d\tau\\
             &= -\frac{1}{4\sqrt{\pi t}}\int_0^\infty \Big\{e^{-\frac{(x-\xi)^2}{4t}}+e^{-\frac{(x+\xi)^2}{4t}}\Big\}\phi_0(-\xi)u_0(-\xi)d\xi-\frac{1}{\sqrt{\pi t}}e^{-\frac{x^2}{4t}}\\&+\int_0^t\frac{G^{\prime}(\tau)}{\sqrt{\pi(t-\tau)}}e^{\frac{-x^2}{4(t-\tau)} }d\tau+\frac{G(0)}{\sqrt{\pi t}}e^{\frac{-x^2}{4t}}\\
             &=I_1+I_2+I_3+I_4.
     \end{aligned}
 \end{equation*}
 Applying the generalized Minkowski inequality, we obtain 
 \begin{equation*}
     \begin{aligned}
        & \Big(\int_{-\infty}^0  |\phi_x(x,t)|^2dx\Big)^{\frac{1}{2}}
       =\Big(\int_{-\infty}^0  |I_1+I_2+I_3+I_4|^2dx\Big)^{\frac{1}{2}}\\
       &\leq \Big[\int_{-\infty}^0\Big|I_1\Big|^2dx\Big]^{\frac{1}{2}}+\Big[\int_{-\infty}^0\Big|I_2\Big|^2dx\Big]^{\frac{1}{2}}+\Big[\int_{-\infty}^0\Big|I_3\Big|^2dx\Big]^{\frac{1}{2}}+\Big[\int_{-\infty}^0\Big|I_4\Big|^2dx\Big]^{\frac{1}{2}}\\
         &\leq\frac{1}{4\sqrt{\pi t}} \int_0^\infty \phi_0(-\xi)|u_0(-\xi) |\Big[\int_{-\infty}^0\Big|e^{-\frac{(x-\xi)^2}{4t}}+e^{-\frac{(x+\xi)^2}{4t}}\Big|^2 dx\Big]^{\frac{1}{2}}d\xi+\frac{1}{2^{\frac{1}{4}}}\\&+\int_0^t\frac{G^{\prime}(\tau)}{\sqrt{\pi(t-\tau)}} \Big[\int_{-\infty}^0e^{\frac{-x^2}{2(t-\tau)}}dx\Big]^{\frac{1}{2}}d\tau+\frac{G(0)}{2^{\frac{1}{4}}}
     \end{aligned}
 \end{equation*}
 Applying the change of variable in the first and third term of the above expression, we get 
\begin{equation*}
    \begin{aligned}
        &\Big(\int_{-\infty}^0  |\phi_x(x,t)|^2dx\Big)^{\frac{1}{2}}\\&\leq
        \frac{1}{4\sqrt{\pi t}} \int_0^\infty \phi_0(-\xi)|u_0(-\xi)| \Big[\int_{-\infty}^0\Big|e^{-\frac{(x-\xi)^2}{4t}}+e^{-\frac{(x+\xi)^2}{4t}}\Big|^2 dx\Big]^{\frac{1}{2}}d\xi+\frac{1}{2^{\frac{1}{4}}}\\&+\frac{1}{\sqrt{2}\pi ^{\frac{1}{4}}}\int_0^t\frac{G^{\prime}(\tau)}{(t-\tau)^{\frac{1}{4}}}d\tau+\frac{G(0)}{2^{\frac{1}{4}}}\\
        &\leq \frac{1}{(2\pi)^{\frac{3}{4}}t^{\frac{1}{4}}}\int_0^\infty\phi_0(-\xi)|u_0(-\xi)| d\xi+\frac{1}{2^{\frac{1}{4}}}+\frac{1}{\sqrt{2}\pi ^{\frac{1}{4}}}\int_0^t\frac{G^{\prime}(\tau)}{(t-\tau)^{\frac{1}{4}}}d\tau+\frac{G(0)}{2^{\frac{1}{4}}}
    \end{aligned}
\end{equation*} 
Since $G^{\prime}$ is bounded and by the condition on $u_0$ we obtain  $\phi_x(.,t)\in L^2((-\infty,0)).$
Similarly we can obtain  $\psi_x(.,t)\in L^2((0,\infty)).$
Hence from the definition of $u$ we get $u(.,t)\in L^2(\mathbb{R}).$ 

\noindent
Similarly, since $G^{\prime}$ is bounded and by the condition on $u_0$  we obtain  $\phi_{xx}(.,t)\in L^2((-\infty,0)) $ and $\psi_{xx}(.,t)\in L^2((0,\infty)).$
Hence, from the definition of $u$ we get $u_x(.,t)\in L^2(\mathbb{R}).$ 

\noindent
Therefore $u(.,t) \in H^1(\mathbb{R}).$ Since $\phi$ and $\phi_x$ are also bounded, thus we get $u(.,t)\in L^{\infty}(\mathbb{R}).$ Similarly using the generalized Minkowski inequality we can prove that $u$ is continuous. Therefore, the lemma follows.
\end{proof}

\subsubsection{Proof of the theorem \eqref{explicit solution}}
\begin{proof}
  From the lemma \eqref{regularity} $U\in C^{\infty}\big((-\infty, 0)\times (0, \infty)\big)$, $V\in C^{\infty} \big((0, \infty)\times (0, \infty)\big).$

    Let $\eta\in C_c^{\infty}(\mathbb{R}\times [0,\infty))$ and $\delta>0.$ Multiplying equation \eqref{split problem left}  by $\eta_x$ and integrating in the domain $(-\infty, -\delta]\times [0,\infty)$ we get the following .
    \begin{equation}
    \label{hamilton jacobi}
       \int_0^{\infty}\int_{-\infty}^{-\delta}\Big[U_t+\frac{U_x ^2}{2} -U_{xx}\Big]\eta_x dx dt=0
    \end{equation}
    Applying integration by parts, we get 
 
    \begin{equation}
    \label{ip}
        \begin{aligned}
        \int_0^{\infty}\int_{-\infty}^{-\delta} U_t \eta_x dx dt
        &= -\int_0^{\infty}\int_{-\infty}^{-\delta} U\eta_{tx} dx dt-\int_{-\infty}^{-\delta} U(x,0)\eta_x (x.0) dx \\
        &=\int_0^{\infty}\int_{-\infty}^{-\delta} U_x \eta_{t} dx dt-\int_0^{\infty}U(-\delta, t)\eta_t(-\delta, t) dt -\int_{-\infty}^{-\delta} U(x,0)\eta_x(x,0) dx.
        \end{aligned}
    \end{equation}
From \eqref{hamilton jacobi} and \eqref{ip}, we get
     \begin{equation*}
        \begin{aligned}
        & \int_0^{\infty}\int_{-\infty}^{-\delta} [U_x \eta_{t} + \frac{U_x ^2}{2} \eta_x -U_{xx} \eta_x] dx dt -\int_0^{\infty}U(-\delta, t)\eta_t(-\delta, t) dt\\
        &-\int_{-\infty}^{-\delta} U(x,0)\eta_x(x,0) dx=0.
        \end{aligned}
    \end{equation*}
In view of the above lemma \eqref{regularity}, passing to the limit as $\delta\to 0,$ we get 

 \begin{equation}\label{IP1}
        \begin{aligned}
        & \int_0^{\infty}\int_{-\infty}^{0} [U_x \eta_{t} + \frac{U_x ^2}{2} \eta_x -U_{xx} \eta_x] dx dt -\int_0^{\infty}U(0-, t)\eta_t(0, t) dt\\
        &-\int_{-\infty}^{0} U(x,0)\eta_x(x,0) dx=0.
        \end{aligned}
    \end{equation}

     Similarly, we get 

    \begin{equation}\label{IP2}
        \begin{aligned}
        & \int_0^{\infty}\int_{0}^{\infty} [V_x \eta_{t} + \frac{(V_x+\alpha)^2}{2} \eta_x -V_{xx} \eta_x] dx dt +\int_0^{\infty}V(0+, t)\eta_t(0, t) dt\\
        &-\int_{0}^{\infty} V(x,0)\eta_x(x,0) dx=0.
        \end{aligned}
    \end{equation}
    Adding \eqref{IP1} and \eqref{IP2}, we get the weak formulation \eqref{integral identity}. Putting back the value of $U$ and $V$ from the lemma \eqref{Hopf-Cole transformation} and value of $G$ from the lemma \eqref{properties of G} we get the explicit formula \eqref{explicit formula}. This completes the proof.
\end{proof}

\subsection{Uniqueness of weak solution}
\begin{theorem} 
\label{uniqueness theorem}
The weak solution of \eqref{IVP} in the sense of definition\eqref{integral identity} is unique.
\end{theorem}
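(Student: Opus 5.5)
Suppose $u$ and $v$ are two weak solutions with the same initial datum $u_0$, and put $w=u-v$. Subtracting the two identities \eqref{integral identity} removes the initial term, and the flux difference factors. In both half-lines the Lipschitz quotient of the flux is explicit:
\begin{equation*}
\tfrac{u^2}{2}-\tfrac{v^2}{2}=\tfrac{u+v}{2}\,w,\qquad \tfrac{(u+\alpha)^2}{2}-\tfrac{(v+\alpha)^2}{2}=\Big(\tfrac{u+v}{2}+\alpha\Big)w .
\end{equation*}
So $w$ satisfies, in the distributional sense on $\mathbb{R}\times(0,\infty)$ with zero initial data, the linear equation
\begin{equation*}
w_t+\big(a(x,t)\,w\big)_x-w_{xx}=0,\qquad a(x,t)=\tfrac{u+v}{2}+\alpha H(x).
\end{equation*}
The coefficient $a$ is in $L^\infty(\mathbb{R}\times(0,T))$ for every $T$, because $u,v\in C([0,T];L^\infty)$. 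The discontinuity of $H$ therefore only enters through a bounded, possibly discontinuous, drift coefficient. This is harmless for an energy estimate, since $a$ is never differentiated.

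The plan is an $L^2$ energy estimate. Because $w\in C([0,T];H^1(\mathbb{R}))$, the weak identity extends by density to test functions $\zeta$ that are compactly supported in time and lie in $H^1$ in space. The equation then gives $w_t\in L^2(0,T;H^{-1})$. The standard Lions--Magenes lemma then makes $t\mapsto\|w(t)\|_{L^2}^2$ absolutely continuous, and testing against $w$ itself (after a time cutoff $\chi_{[0,t]}$, smoothed) is justified. Testing with $w$ gives
\begin{equation*}
\frac12\|w(t)\|_{L^2}^2+\int_0^t\!\!\int_{\mathbb{R}} w_x^2\,dx\,ds=\int_0^t\!\!\int_{\mathbb{R}} a\,w\,w_x\,dx\,ds .
\end{equation*}
By Young's inequality the right side is at most
\begin{equation*}
\frac12\int_0^t\!\!\int w_x^2+\frac{\|a\|_\infty^2}{2}\int_0^t\|w\|_{L^2}^2\,ds .
\end{equation*}
Absorbing the gradient term leaves $\|w(t)\|_{L^2}^2\le \|a\|_\infty^2\int_0^t\|w(s)\|_{L^2}^2ds$ with $w(0)=0$. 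Gronwall's inequality then yields $w\equiv0$ on $[0,T]$ for every $T$, hence uniqueness.

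I expect the main obstacle to be the justification step rather than the estimate. Three things need care: extending the test class from $C_c^\infty(\mathbb{R}\times[0,\infty))$ to $w$ itself, recovering the zero initial condition in the $C([0,\infty);L^2)$ sense, and handling the time derivative, since only $w\in C([0,T];H^1)$ is assumed. I would first mollify in time, using a Steklov average $w^h(x,t)=\frac1h\int_t^{t+h}w(x,s)\,ds$, multiplied by a spatial cutoff $\chi_R(x)$. Both $w\in L^2$ and $aw\in L^2$ hold uniformly on $[0,T]$, and $w_x\in L^2$. The terms produced by the cutoff are therefore bounded by $R^{-1}\|w\|_{L^2}(\|w_x\|_{L^2}+\|a\|_\infty\|w\|_{L^2})$ and vanish as $R\to\infty$; then one lets $h\to0$. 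The interface $x=0$ needs no special treatment, because the weak formulation is written on all of $\mathbb{R}$ and $a$ is merely bounded there.
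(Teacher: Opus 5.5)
Your proposal is correct and follows essentially the same route as the paper. The paper also subtracts the two weak formulations and tests with $\beta^{\epsilon}(t)e$ for a piecewise-linear time cutoff $\beta^{\epsilon}$. It then bounds the factored flux term using the $L^\infty$ bounds on $u,v$ with Cauchy's inequality with $\epsilon$, absorbs the gradient term, and concludes by Gronwall.

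Your justification via $w_t\in L^2(0,T;H^{-1})$ and the Lions--Magenes lemma (or Steklov averages) is more careful than the paper's, which simply assumes $e\in C^1([0,\infty);H^1(\mathbb{R}))$ and appeals to an unspecified approximation.
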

\begin{proof}
  To prove it, we follow the argument given in \cite{chung}. Let $u,v$ be two weak solutions with initial data $u_0.$ Define $e:=u-v.$ Then $e$ satisfies the following equation
    \begin{equation}
    \label{integral identity 1}
        \begin{aligned}
              \int_{0}^{\infty} \int_{-\infty}^{\infty}\Big[ e\zeta_t + [(1-H(x))\frac{e(u+v)}{2}+H(x)\frac{e(u+v+2\alpha)}{2}-e_x] \zeta_x\Big] dx dt=0
        \end{aligned}
    \end{equation} 
    for all test functions $\zeta \in C_c^{\infty}(\mathbb{R}\times [0,\infty)).$ Let $\epsilon >0$ and $T>0$ be fixed.
    By density argument, we can choose $\zeta$  as $\zeta(x,t)= \beta^{\epsilon}(t)e(x,t)$ where 
    \begin{equation*}
      \beta^{\epsilon} (t)=\begin{cases}
          1, &0\leq t < T-\epsilon\\
          \frac{T-t}{\epsilon}, &T-\epsilon \leq t< T\\
          0, &t>T.
      \end{cases}  
    \end{equation*}
In addition to this, we can assume $e\in  C^1([0,\infty); H^1(\mathbb{R})).$ Later by approximation it follows for
$e\in  C([0,\infty); H^1(\mathbb{R})).$ 
     Therefore, passing to the limit as  $\epsilon\to 0$(details given below),  we obtain 

    \begin{equation}
    \label{passage to the limit}
     \begin{aligned}
        \int_{0}^{\infty} \int_{-\infty}^{\infty} e\zeta_t dx dt
    &=\int_{0}^{\infty} \int_{-\infty}^{\infty}  e(x,t).e_t(x,t) \beta^{\epsilon}(t) dx dt + \int_{0}^{\infty}{\beta^{\epsilon}}^{\prime}(t)\int_{-\infty}^{\infty}  e^2 (x,t)dx  dt\\ 
     &= \int_{0}^{\infty}  \beta^{\epsilon}(t) <e, e_t> dt -\frac{1}{\epsilon} \int_{T-\epsilon}^{T} \|e\|^2 dt\\
     &\to \frac{1}{2}\int_{0}^T \frac{d}{dt} ( \|e(., t)\|^2)   dt - \|e(.,T)\|^2 =-\frac{1}{2}  \|e(.,T)\|^2\\
     \int_{0}^{\infty} \int_{-\infty}^{\infty} e_x\zeta_x dx dt &\to \int_0^T||e_x(t)||_2^2 dt. 
    \end{aligned}
 \end{equation}
    
     Hence, from \eqref{passage to the limit} and \eqref{integral identity 1} we get the following.
     \begin{equation*}
         \begin{aligned}
             ||e(T)||_2^2+2\int_0^T||e_x(t)||_2^2 dt&= 2 \int_{0}^{T} \int_{-\infty}^{\infty} \Big[(1-H(x))\frac{e(u+v)}{2}+H(x)\frac{e(u+v+2\alpha)}{2}\Big]e_x dx dt.
         \end{aligned}
     \end{equation*}
 Since $u, v\in C([0,\infty); H^1(\mathbb{R})\cap L^{\infty}(\mathbb{R})),$ we get  
 $$||e(T)||_2^2+2\int_0^T||e_x(t)||_2^2 dt\leq M \int_0^T \int_{-\infty}^{\infty}| ee_x| dx  dt,$$
 for some positive constant $M.$ Thus, using  the H$\ddot{o}$lder inequality and Cauchy's inequality with $\epsilon$ we get  \begin{equation*}
         \begin{aligned}
              ||e(T)||_2^2+2\int_0^T||e_x(t)||_2^2 dt&
              \leq  M \int_0^T \Big(\epsilon ||e_x(t)||_2^2+\frac{1}{4\epsilon} ||e(T)||_2^2\Big)dt\\&=M\epsilon \int_0^T ||e_x(t)||_2^2 dt + \frac{M}{4\epsilon}  \int_0^T||e(T)||_2^2dt
         \end{aligned}
     \end{equation*} 
     which implies  
     \begin{equation*}
         \begin{aligned}
             ||e(T)||_2^2+(2-M\epsilon)\int_0^T||e_x(t)||_2^2 dt\leq\frac{M}{4\epsilon}  \int_0^T||e(T)||_2^2dt
         \end{aligned}
     \end{equation*} 
     Choose $\epsilon$ so that $(2-M\epsilon )>0.$ Then from the last inequality we get
     \begin{equation*}
          ||e(T)||_2^2\leq\frac{M}{4\epsilon}  \int_0^T||e(T)||_2^2dt.
     \end{equation*}
     Hence, from Gronwall's inequality, we get $ ||e(T)||_2^2=0.$ Since $T>0$ is arbitrary, we get $||e(t)||_2^2=0$ for all $t>0.$ This proves the uniqueness of the weak solution.
\end{proof}

\section{Large time behavior of the solution for $\alpha<0$}
In this section, under the assumption \eqref{extracondition}, we investigate the large-time behavior of the solution for $\alpha<0$. First, we obtain the large time behavior of the function $G.$
\subsection{Asymptotic behavior of the intermediate function $G$}
We assume the following conditions along with $\alpha<0$


The boundary condition $G(t)$ satisfies an asymptotic behavior, which is given by the following lemma.
\begin{lemma}\label{approximation of G}
   Under assumption \eqref{extracondition} along with $\alpha<0,$  $G(t)$ obtained in lemma \eqref{properties of G}  satisfies the following: $$\displaystyle{\lim_{t \to \infty}}\sqrt{t}G(t)=-\frac{2}{\alpha\sqrt{\pi}}\Big[\phi_0(-\infty)+\phi_0(\infty)\Big].$$
\end{lemma}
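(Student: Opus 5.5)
The plan is to use the convolution representation \eqref{convolution}, $G=F*H$, from Step 4 of the proof of Lemma \eqref{properties of G}, and to find the large-time behaviour of $F$ and $H$ separately. Throughout, write $\theta(\infty):=\phi_0(-\infty)+\phi_0(\infty)$. These limits exist and are positive by \eqref{extracondition}, since $u_0\in L^1((-\infty,0))$ and $u_0+\alpha\in L^1((0,\infty))$.

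\textbf{Step 1: behaviour of $F$.} Substituting $\xi=2\sqrt{t}\,y$ in the definition of $F$ gives
\begin{equation*}
\sqrt{t}\,F(t)=\frac{1}{\sqrt{\pi}}\int_0^\infty y e^{-y^2}\,\theta(2\sqrt{t}\,y)\,dy .
\end{equation*}
Since $\theta$ is bounded, dominated convergence yields
\begin{equation*}
\sqrt{t}F(t)\to \frac{\theta(\infty)}{2\sqrt{\pi}}\quad\text{as } t\to\infty,
\end{equation*}
and also the uniform bound $|F(s)|\le C s^{-1/2}$ for all $s>0$.

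\textbf{Step 2: behaviour of $H$.} Set $a=-\alpha/4>0$, so that
\begin{equation*}
H(t)=t^{-1/2}\Big[\tfrac{1}{\sqrt\pi}-a\sqrt t\, e^{a^2t}\operatorname{erfc}(a\sqrt t)\Big].
\end{equation*}
By Step 3 of Lemma \eqref{properties of G}, $H>0$. The standard asymptotic expansion $e^{z^2}\operatorname{erfc}(z)=\frac{1}{\sqrt\pi z}\big(1-\frac{1}{2z^2}+O(z^{-4})\big)$ gives
\begin{equation*}
H(t)\sim \frac{1}{2a^2\sqrt\pi\, t^{3/2}}=\frac{8}{\alpha^2\sqrt{\pi}\,t^{3/2}},
\end{equation*}
so $H\in L^1(0,\infty)$ and $H(t)\le Ct^{-3/2}$ for $t\ge 1$. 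Its total mass I would compute by Laplace transform. Since $H(t)=t^{-1/2}E_{\frac12,\frac12}(-a\sqrt t)$, the known transform gives $\mathcal{L}[H](p)=\frac{1}{p^{1/2}+a}$. Letting $p\to0^+$ (monotone convergence, since $H>0$) gives
\begin{equation*}
\int_0^\infty H=\frac1a=-\frac4\alpha .
\end{equation*}
As an alternative check, one can integrate the identity $\frac{d}{ds}h(t-s)=-\frac{\alpha}{4}H(t-s)$ from the proof of Lemma \eqref{properties of G}, using $h(t)\to0$ for $\alpha<0$.

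\textbf{Step 3: the convolution limit.} Write
\begin{equation*}
\sqrt t\,G(t)=\sqrt t\int_0^{t/2}F(s)H(t-s)\,ds+\int_0^{t/2}\sqrt t\,F(t-u)\,H(u)\,du .
\end{equation*}
For the first piece, $H(t-s)\le Ct^{-3/2}$ and $\int_0^{t/2}|F|\le C\sqrt t$, so it is $O(t^{-1/2})\to0$. For the second piece, for fixed $u$ we have $\sqrt t F(t-u)\to\theta(\infty)/(2\sqrt\pi)$ by Step 1. Moreover $|\sqrt tF(t-u)|\le C\sqrt{t/(t-u)}\le \sqrt2\,C$ for $u<t/2$, so dominated convergence with dominating function $CH\in L^1$ gives the limit
\begin{equation*}
\frac{\theta(\infty)}{2\sqrt\pi}\int_0^\infty H=-\frac{2}{\alpha\sqrt\pi}\,\theta(\infty),
\end{equation*}
which is the claim.

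The main obstacle is Step 2: establishing integrability of $H$ (the cancellation in the bracket must be shown to leave a $t^{-3/2}$ decay, not $t^{-1/2}$), and evaluating $\int_0^\infty H$ exactly. I would first try the Mittag-Leffler Laplace transform, and fall back on the $h$-identity if justification of $p\to0$ is delicate.
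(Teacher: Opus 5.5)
Your proof is correct and follows essentially the paper's route: the convolution form $G=F*H$, the limit $\sqrt{t}F(t)\to\theta(\infty)/(2\sqrt{\pi})$, integrability of the kernel with $\int_0^\infty H=-4/\alpha$ (the paper obtains this from the $h$-identity, your fallback), and dominated convergence. The differences are minor. You split the convolution at $t/2$ and use the $t^{-3/2}$ decay of $H$, whereas the paper writes $\sqrt{t}=(\sqrt{t}-\sqrt{t-s})+\sqrt{t-s}$ and disposes of the first part using $f_1\to 0$. Your bound $|F(s)|\le Cs^{-1/2}$ is also the accurate one: the paper's claimed uniform bound on $f_1=\sqrt{\pi}F$ drops the boundary term $\theta(0)/(2\sqrt{t})$ in its integration by parts, a harmless slip since $s^{-1/2}$ is integrable near $0$.
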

\begin{proof}
Recall the formula of $G$ as in  equation\eqref{formula for G}.

\begin{equation*}
G(t)=\frac{1}{\sqrt{\pi}}\int_0^t\int_0^{\infty}\frac{\xi e^{\frac{-\xi^2}{4s}}\theta(\xi)}{4(s)^{\frac{3}{2}}\sqrt{t-s}}\Big[\frac{1}{\sqrt{\pi}}+\frac{\alpha\sqrt{t-s}}{4}e^{\frac{\alpha^2(t-s)}{16}}\operatorname{erfc}\Big(\frac{-\alpha\sqrt{t-s}}{4}\Big)\Big]d\xi ds.
\end{equation*}
Define $H(x):=\frac{1}{\sqrt{\pi}}-xe^{x^2}\operatorname{erfc}(x)$ and $f_1(t)=\int_0^{\infty}\frac{ue^{-u^2}\theta(2u\sqrt{t})}{\sqrt{t}}du.$  Then with these notations, the  equation \eqref{formula for G} can be simplified as follows.
    \begin{equation}\label{expression of G(t)}
        \begin{aligned}
            G(t)&=\frac{1}{\sqrt{\pi}}\int_0^t\int_0^{\infty}\frac{\xi e^{\frac{-\xi^2}{4s}}\theta(\xi)}{4(s)^{\frac{3}{2}}\sqrt{t-s}}H\Big(\frac{-\alpha\sqrt{t-s}}{4}\Big)d\xi ds\\&=\frac{1}{\sqrt{\pi}}\int_0^t\int_0^{\infty}\frac{ue^{-u^2}}{\sqrt{s(t-s)}}\theta(2u\sqrt{s})H\Big(\frac{-\alpha\sqrt{t-s}}{4}\Big)duds\\&=\frac{1}{\sqrt{\pi}}\int_0^t\frac{H\Big(\frac{-\alpha\sqrt{t-s}}{4}\Big)}{\sqrt{t-s}}\Big(\int_0^{\infty}\frac{ue^{-u^2}\theta(2u\sqrt{s})}{\sqrt{s}}du\Big)ds\\&=\frac{1}{\sqrt{\pi}}\int_0^t\frac{H\Big(\frac{-\alpha\sqrt{s}}{4}\Big)}{\sqrt{s}}f_1(t-s)ds.
        \end{aligned}
    \end{equation}  

    In order to determine the limit of $\sqrt{t} G(t)$, we use the following decomposition from \eqref{expression of G(t)}.
    
    \begin{equation}\label{equality 2}
        \begin{aligned}
            \sqrt{t}G(t)&=\frac{1}{\sqrt{\pi}}\int_0^t\frac{H\Big(\frac{-\alpha\sqrt{s}}{4}\Big)}{\sqrt{s}}(\sqrt{t}-\sqrt{t-s})f_1(t-s)ds+\frac{1}{\sqrt{\pi}}\int_0^t\frac{H\Big(\frac{-\alpha\sqrt{s}}{4}\Big)}{\sqrt{s}}\sqrt{t-s}f_1(t-s)ds\\&=A+B.
        \end{aligned}
    \end{equation}
    First consider $A.$  The expression for $A$ can be rewritten

    \begin{equation*}
        \begin{aligned}
            A=\frac{1}{\sqrt{\pi}}\int_{0}^{\infty} \chi_{[0,t]}(s)(\sqrt{t}-\sqrt{t-s})f_1(t-s)\frac{H\Big(\frac{-\alpha\sqrt{s}}{4}\Big)}{\sqrt{s}} ds.
        \end{aligned}
    \end{equation*}
    
    Notice that 
    \begin{equation}
    \label{essential inequalities}
    \begin{aligned}
       &\sqrt{t}-\sqrt{t-s}=\frac{s}{\sqrt{t}+\sqrt{t-s}}\leq \frac{s}{\sqrt{t}} \\
       & |H\Big(\frac{-\alpha\sqrt{s}}{4}\Big)| \leq \frac{C_1} {1+ \frac{\alpha^2 s}{16}}\\
       & |f_1|\leq C_2,~~~ \lim_{t\to \infty} f_1 (t)=0,
    \end{aligned}
    \end{equation}
for some positive constants $C_1$ and $C_2.$ The last two statements are followed by the observation 
    
\begin{equation*}
        \begin{aligned}
            f_1(t)=\int_0^{\infty}\frac{ue^{-u^2}\theta(2u\sqrt{t})}{\sqrt{t}}du=\int_0^{\infty}e^{-u^2}\theta^{\prime}(2u\sqrt{t})du.
        \end{aligned}
    \end{equation*} 
Now, applying the above inequalities \eqref{essential inequalities}

     \begin{equation}
     \label{ess-1}
        \begin{aligned}
            |A|&\leq \frac{1}{\sqrt{\pi}} \int_{0}^{t} |f_1 (t-s)| \frac{C_1 \sqrt{s}} {\sqrt{t}(1+ \frac{\alpha^2 s}{16})} ds\\
               &\leq \frac{C_1}{\sqrt{\pi t}}\int_{0}^{t} |f_1 (s)| \frac{\sqrt{t-s}} {(1+ \frac{\alpha^2 (t-s)}{16})} ds\\
             &  \leq \frac{4C_1}{\alpha\sqrt{\pi t}} \int_{0}^{t} \frac{|f_1(s)|}{\sqrt{1+ \frac{\alpha^2 (t-s)}{16}}} ds.
        \end{aligned}
    \end{equation}
Since $f_1(t)\to 0,$ given an $\epsilon >0,$ there exists a $T$  such that $|f_1 (t)| \leq \epsilon$ for $t\geq T.$

Now split the above integrals into two parts:
\begin{equation}
\label{ess-2}
\begin{aligned}
    \int_{0}^{t} \frac{|f_1(s)|}{\sqrt{1+ \frac{\alpha^2 (t-s)}{16}}} ds= \int_{0}^{T} \frac{|f_1(s)|}{\sqrt{1+ \frac{\alpha^2 (t-s)}{16}}} ds+\int_{T}^t \frac{|f_1(s)|}{\sqrt{1+ \frac{\alpha^2 (t-s)}{16}}} ds.
\end{aligned}
    \end{equation}

Now 

\begin{equation}
\label{ess-3}
\begin{aligned}
\int_{0}^{T} \frac{|f_1(s)|}{\sqrt{1+ \frac{\alpha^2 (t-s)}{16}}} ds&\leq C_2 \int_{0}^{T} \frac{1}{\sqrt{1+ \frac{\alpha^2 (t-s)}{16}}} ds\\
\int_{T}^{t} \frac{|f_1(s)|}{\sqrt{1+ \frac{\alpha^2 (t-s)}{16}}} ds&\leq \epsilon \int_{T}^{t} \frac{1}{\sqrt{1+ \frac{\alpha^2 (t-s)}{16}}} ds\\
&\leq \epsilon \int_{0}^{t} \frac{1}{\sqrt{1+ \frac{\alpha^2 (t-s)}{16}}} ds=\frac{32 \epsilon }{\alpha^2}
\left(
\sqrt{1+\frac{\alpha^2 t}{16}}-1
\right).
\end{aligned}
    \end{equation}
From equations \eqref{ess-1},\eqref{ess-2}, \eqref{ess-3}, it follows that $|A| \to 0$ as $t\to \infty.$
    
    Next, consider $B=\frac{1}{\sqrt{\pi}}\int_0^{\infty}\chi_{[0,t]}\frac{H\Big(\frac{-\alpha\sqrt{s}}{4}\Big)}{\sqrt{s}}\sqrt{t-s}f_1(t-s)ds.$

\noindent
    
    Now $$\sqrt{t}f_1(t)=\int_0^{\infty}ue^{-u^2}\Big(\phi_0(-2u\sqrt{t})+\phi_0(2u\sqrt{t})\Big)du.$$

    \noindent
    Assume the condition \eqref{extracondition} .
    Now from \eqref{derivative of h} $h^{\prime}(s)=\frac{\alpha}{4\sqrt{s}}H\Big(\frac{-\alpha\sqrt{s}}{4}\Big),$ so  
\begin{equation*}
        \begin{aligned}
            \int_0^{\infty}\frac{H\Big(\frac{-\alpha\sqrt{s}}{4}\Big)}{\sqrt{s}}ds=\frac{4}{\alpha}\int_0^{\infty}h^{\prime}(s)ds=-\frac{4}{\alpha}.
        \end{aligned}
    \end{equation*} 
    Then, by the dominated convergence theorem, we get $$\displaystyle{\lim_{t\to \infty}}\sqrt{t}f_1(t)=\frac{1}{2}\Big[\phi_0(-\infty)+\phi_0(\infty)\Big]$$ and \begin{equation*}
        \begin{aligned}
            \displaystyle{\lim_{t\to \infty}}B&=\frac{\Big[\phi_0(-\infty)+\phi_0(\infty)\Big]}{2\sqrt{\pi}}\int_0^{\infty}\frac{H\Big(\frac{-\alpha\sqrt{s}}{4}\Big)}{\sqrt{s}}ds\\&=-\frac{2}{\alpha\sqrt{\pi}}\Big[\phi_0(-\infty)+\phi_0(\infty)\Big].
        \end{aligned}
    \end{equation*} Thus, from \eqref{equality 2} we get $$\displaystyle{\lim_{t\to \infty}}\sqrt{t}G(t)=-\frac{2}{\alpha\sqrt{\pi}}\Big[\phi_0(-\infty)+\phi_0(\infty)\Big].$$
\end{proof}
\subsection{Asymptotic behavior of the weak solution}
In this section, we derive the asymptotic behavior of this solution using the asymptotic limit of $\sqrt{t}G(t)$. In order to obtain the asymptotic behavior of the solution, we first obtain the asymptotic behavior of $\phi,\psi$ and $\phi_x,\psi_x$ separately as in the following two lemmas.
\begin{lemma}\label{approximation of phi}
      Under assumption \eqref{extracondition} along with $\alpha<0,$ $\sqrt{t}\phi$ and $\sqrt{t}\phi_x$ converge uniformly on compact subsets of $(-\infty, 0)$,  and the limits are given by \begin{equation*}
        \begin{aligned}
            &\displaystyle{\lim_{t \to \infty}}\sqrt{t}\phi(x,t)=-\frac{\phi_0(-\infty)}{\sqrt{\pi}}x-\frac{2}{\alpha\sqrt{\pi}}\Big[\phi_0(-\infty)+\phi_0(\infty)\Big],\\&
            \displaystyle{\lim_{t \to \infty}}\sqrt{t}\phi_x(x,t)=-\frac{\phi_0(-\infty)}{\sqrt{\pi}}.
        \end{aligned}
    \end{equation*} 
\end{lemma}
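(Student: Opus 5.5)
We work from the representation \eqref{formula for phi} of $\phi$, multiply by $\sqrt{t}$, and treat the initial-data part and the boundary part separately. Write $\phi=\Phi_1+\Phi_2$, where
\[
\Phi_1(x,t)=\frac{-1}{2\sqrt{\pi t}}\int_0^\infty \Big\{e^{-\frac{(x-\xi)^2}{4t}}-e^{-\frac{(x+\xi)^2}{4t}}\Big\}\phi_0(-\xi)\,d\xi,\qquad
\Phi_2(x,t)=-\frac{x}{2\sqrt{\pi}}\int_0^t e^{-\frac{x^2}{4(t-\tau)}}\frac{G(\tau)}{(t-\tau)^{3/2}}\,d\tau .
\]

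\textbf{The initial-data part.} For $\Phi_1$, write $e^{-\frac{(x-\xi)^2}{4t}}-e^{-\frac{(x+\xi)^2}{4t}}=e^{-\frac{x^2+\xi^2}{4t}}\cdot 2\sinh\!\big(\frac{x\xi}{2t}\big)$ and substitute $\xi=2\sqrt{t}\,u$. This gives
\[
\sqrt{t}\,\Phi_1=-\frac{2\sqrt t}{\sqrt{\pi}}\int_0^\infty e^{-u^2-\frac{x^2}{4t}}\sinh\!\Big(\frac{xu}{\sqrt t}\Big)\phi_0(-2\sqrt t u)\,du .
\]
Since $\sqrt t\,\sinh(xu/\sqrt t)\to xu$, uniformly for $x$ in compacts and dominated by $C|x|u\,e^{|x|u}$ for $t\ge1$, and since $\phi_0(-2\sqrt t u)\to\phi_0(-\infty)$ pointwise for $u>0$ with $\phi_0$ bounded (from $u_0\in L^1$), dominated convergence gives
\[
\sqrt t\,\Phi_1\to -\frac{2x\phi_0(-\infty)}{\sqrt\pi}\int_0^\infty ue^{-u^2}\,du=-\frac{\phi_0(-\infty)}{\sqrt\pi}\,x .
\]
The estimates are uniform for $x$ in a compact set $K\subset(-\infty,0)$, so the convergence is uniform. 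For $\sqrt t\,\partial_x\Phi_1$ we differentiate under the integral. The same substitution produces $\cosh(xu/\sqrt t)\to1$ together with a term of order $t^{-1/2}$ coming from $\partial_x e^{-x^2/4t}$, and the limit is $-\phi_0(-\infty)/\sqrt\pi$.

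\textbf{The boundary part.} For $\Phi_2$, substitute $\sigma=t-\tau$:
\[
\sqrt t\,\Phi_2=-\frac{x}{2\sqrt\pi}\int_0^t \frac{e^{-x^2/4\sigma}}{\sigma^{3/2}}\,\sqrt t\,G(t-\sigma)\,d\sigma .
\]
Fix $x<0$. The kernel $-\frac{x}{2\sqrt\pi}\sigma^{-3/2}e^{-x^2/4\sigma}$ is a probability density on $(0,\infty)$ (the first-passage density), and its tail beyond $\sigma=t/2$ is $O(|x|t^{-1/2})$. Split the integral at $\sigma=t/2$.
\begin{itemize}
\item On $\sigma\le t/2$ we have $\sqrt t\,G(t-\sigma)=\sqrt{\tfrac{t}{t-\sigma}}\;\sqrt{t-\sigma}\,G(t-\sigma)$. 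By Lemma~\ref{approximation of G} this is uniformly bounded and converges pointwise to $L:=-\frac{2}{\alpha\sqrt\pi}\big[\phi_0(-\infty)+\phi_0(\infty)\big]$.
\item On $\sigma>t/2$ we use that $\sqrt{\tau}\,G(\tau)$ is bounded on $(0,\infty)$. This holds because $G$ is continuous with $G(0)$ finite and $\sqrt{\tau}G(\tau)\to L$. The piece is then bounded by $C|x|\int_{t/2}^t \sqrt t\,\sigma^{-3/2}(t-\sigma)^{-1/2}\,d\sigma=O(|x|t^{-1/2})\to0$.
\end{itemize}
Hence $\sqrt t\,\Phi_2\to L$, uniformly on compacts. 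This yields the first limit.

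For the derivative we use the form \eqref{definition 6}. Its boundary terms are
\[
\int_0^t\frac{G'(\tau)}{\sqrt{\pi(t-\tau)}}e^{-\frac{x^2}{4(t-\tau)}}\,d\tau+\frac{G(0)}{\sqrt{\pi t}}e^{-\frac{x^2}{4t}}
=\partial_x\Phi_2 .
\]
Rather than working with $G'$, whose decay we do not control, we differentiate $\Phi_2$ directly in $x$:
\[
\partial_x\Phi_2=\int_0^t \partial_x\Big(-\frac{x}{2\sqrt\pi}\,\sigma^{-3/2}e^{-x^2/4\sigma}\Big)G(t-\sigma)\,d\sigma .
\]
The derivative of the first-passage kernel has total integral $\partial_x(1)=0$ over $(0,\infty)$, and its absolute value is integrable in $\sigma$. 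Splitting at $t/2$ again, $\sqrt t\,\partial_x\Phi_2$ converges to $L\cdot 0=0$, plus tail terms which are $O(t^{-1/2})$. Combined with the initial-data part, this gives $\sqrt t\,\phi_x\to-\phi_0(-\infty)/\sqrt\pi$.

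\textbf{Main obstacle.} The delicate step is the boundary contribution. One needs uniform boundedness of $\sqrt\tau\,G(\tau)$ near $\tau=0$, which holds since $G$ is continuous, and integrability of the differentiated kernel near $\sigma=0$, which holds for $x\ne0$ because of the factor $e^{-x^2/4\sigma}$. Uniformity on compacts of $(-\infty,0)$ follows because all bounds depend continuously on $x$ and stay finite as long as $x$ remains away from $0$.
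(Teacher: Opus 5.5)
Your proposal is correct and follows essentially the paper's route. You split $\sqrt{t}\phi$ into the initial-data part, handled by dominated convergence after the substitution $\xi=2\sqrt{t}\,u$, and the boundary part, handled by dominated convergence against the kernel $\sigma^{-3/2}e^{-x^2/4\sigma}$ using $\sqrt{t}G(t)\to L$ from Lemma~\ref{approximation of G}; for $\phi_x$ you trade the $G'$-terms for $G$-terms (the paper's $J_2+J_3$), and their limits cancel exactly as your observation that the $x$-derivative of the first-passage kernel integrates to zero predicts. The remaining differences (splitting at $\sigma=t/2$ rather than writing $\sqrt{t}=(\sqrt{t}-\sqrt{t-\tau})+\sqrt{t-\tau}$, and one global $\sinh$-domination rather than the paper's $T$-cutoff and L'H\^{o}pital argument for $I_3$) are only technical, and if anything cleaner.
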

\begin{proof}
From \eqref{formula for phi} we get 
\begin{equation*}
\begin{aligned}
    \sqrt{t}\phi(x,t)&=\frac{-1}{2\sqrt{\pi}}\int_0^\infty \Big\{e^{-\frac{(x-\xi)^2}{4t}}-e^{-\frac{(x+\xi)^2}{4t}}\Big\}\phi_0(-\xi)d\xi\\&-\frac{x}{2\sqrt{\pi}}\int_0^t\frac{e^{-\frac{x^2}{4 \tau}}}{\tau^{\frac{3}{2}}}(\sqrt{t}-\sqrt{t-\tau})G(t-\tau)d\tau-\frac{x}{2\sqrt{\pi}}\int_0^t\frac{e^{-\frac{x^2}{4 \tau}}}{\tau^{\frac{3}{2}}}\sqrt{t-\tau}G(t-\tau)d\tau\\&=I_3+I_4+I_5
\end{aligned}
\end{equation*} where \begin{equation*}
    \begin{aligned}
    &I_3=\frac{-1}{2\sqrt{\pi}}\int_0^\infty \Big\{e^{-\frac{(x-\xi)^2}{4t}}-e^{-\frac{(x+\xi)^2}{4t}}\Big\}\phi_0(-\xi)d\xi,
    \\&I_4=-\frac{x}{2\sqrt{\pi}}\int_0^{\infty}\frac{e^{-\frac{x^2}{4 \tau}}}{\tau^{\frac{3}{2}}}\chi_{[0,t]}(\sqrt{t}-\sqrt{t-\tau})G(t-\tau)d\tau\\&I_5=-\frac{x}{2\sqrt{\pi}}\int_0^\infty \frac{e^{-\frac{x^2}{4 \tau}}}{\tau^{\frac{3}{2}}}\chi_{[0,t]}\sqrt{t-\tau}G(t-\tau)d\tau.
    \end{aligned}
\end{equation*} 

 Now, we make the simplification of $I_3$ using a change of variable as follows:

\begin{equation*}
 \begin{aligned}
    -2\sqrt{\pi} I_3&= e^{-\frac{x^2}{4t}}\int_{0}^{\infty} e^{-\frac{\xi^2}{4t}}\Big[ e^{\frac{x \xi}{2t}}- e^{-\frac{x \xi}{2t}}\Big] \phi_0(-\xi) d \xi\\
     &=2e^{-\frac{x^2}{4t}}\sqrt{t}\int_{0}^{\infty} e^{-y^2}\Big[ e^{\frac{xy}{\sqrt{t}}}- e^{-\frac{xy}{\sqrt{t}}}\Big] \phi_0(-2 \sqrt{t}y) d y\\
     &=e^{-\frac{x^2}{4t}}\Big[2\sqrt{t}\int_{0}^{T} e^{-y^2}\big( e^{\frac{xy}{\sqrt{t}}}- e^{-\frac{xy}{\sqrt{t}}}\big) \phi_0(-2 \sqrt{t}y) d y\\&+2\sqrt{t}\int_{T}^{\infty} e^{-y^2}\big( e^{\frac{xy}{\sqrt{t}}}- e^{-\frac{xy}{\sqrt{t}}}\big) \phi_0(-2 \sqrt{t}y) d y\Big]\\
     &= e^{-\frac{x^2}{4t}}[I_{31}+I_{32}],
 \end{aligned}   
\end{equation*}
where $T>0$ is any positive number. 
Now 
\begin{equation*}
 \begin{aligned}
 I_{31}&=2\sqrt{t}\int_{0}^{T} e^{-y^2}\big( e^{\frac{xy}{\sqrt{t}}}- e^{-\frac{xy}{\sqrt{t}}}\big) \phi_0(-2 \sqrt{t}y) d y\\
 &=2\int_{0}^{T} e^{-y^2}\big( \int_{-1}^{1} xye^{\frac{sxy}{\sqrt{t}}} ds\big) \phi_0(-2 \sqrt{t}y) d y\\
 &=2x \int_{-1}^{1} \int_{0}^{T} ye^{-y^2}e^{\frac{sxy}{\sqrt{t}}} \phi_0(-2 \sqrt{t}y) d y ds
 \end{aligned}   
\end{equation*}

Using the dominated convergence theorem, we get 
\begin{equation}
\label{limit I_31}
\lim_{t\to \infty} I_{31}=2x \phi_0(-\infty)\int_{0}^{T}y e^{- y^2} dy.
\end{equation}

For large enough $t,$ the second integral $I_{32}$ becomes
 \begin{equation}
 \label{limit I_{32}}
 \begin{aligned}
  I_{32}&=2\sqrt{t}\int_{T}^{\infty} e^{-y^2}\big( e^{\frac{xy}{\sqrt{t}}}- e^{-\frac{xy}{\sqrt{t}}}\big) \phi_0(-2 \sqrt{t}y) d y\\
  &=\phi_0(-\infty)(1+o(t))2\sqrt{t}\int_{T}^{\infty} e^{-y^2}\big( e^{\frac{xy}{\sqrt{t}}}- e^{-\frac{xy}{\sqrt{t}}}\big)  d y\\
  &= \phi_0(-\infty)(1+o(t))\int_{2 \sqrt{t}T}^{\infty}  \Big\{e^{-\frac{(x-\xi)^2}{4t}}-e^{-\frac{(x+\xi)^2}{4t}}\Big\} d \xi\\
  &=\phi_0(-\infty)(1+o(t)) 2\sqrt{t}\Big[ \int_{-\infty}^{\frac{x}{2 \sqrt{t}}-T} e^{- u^2} du-\int_{\frac{x}{2 \sqrt{t}}+T}^{\infty} e^{-u^2} du\Big]
\end{aligned}   
\end{equation}

Applying L'Hospital's rule we obtain,

 \begin{equation*}
 \label{Lopital rule}
 \begin{aligned}
& \lim_{t\to \infty} 2\sqrt{t}\Big[ \int_{-\infty}^{\frac{x}{2 \sqrt{t}}-T} e^{- u^2} du-\int_{\frac{x}{2 \sqrt{t}}+T}^{\infty} e^{-u^2} du\Big]\\
 =&2 \lim_{t\to \infty} \dfrac{ \int_{-\infty}^{\frac{x}{2 \sqrt{t}}-T} e^{- u^2} du-\int_{\frac{x}{2 \sqrt{t}}+T}^{\infty}e^{-u^2}du}{t^{-\frac{1}{2}}}\\
 =&2 \lim_{t\to \infty} \dfrac{-\frac{x}{4t^{3/2}}\Big[e^{-(\frac{x}{2\sqrt t}-T)^2}
+e^{-\left(\frac{x}{2\sqrt t}+T\right)^2}\Big]
.}{-\frac{1}{2}t^{-\frac{3}{2}}}\\
&=2 x e^{-T^2}
\end{aligned}   
\end{equation*}

Since $T$ arbritary, Combining \eqref{limit I_31}, \eqref{limit I_{32}}, we get

$$\lim_{t\to \infty} I_3= -\frac{\phi_0(-\infty)}{\sqrt{\pi}}x$$

  Notice that 
    \begin{equation}
    \label{essential inequalities 1}
    \begin{aligned}
       &\sqrt{t}-\sqrt{t-\tau}=\frac{\tau}{\sqrt{t}+\sqrt{t-\tau}}\leq \frac{\tau}{\sqrt{t}} \\
       & |G|\leq C_3,~~~ \lim_{t\to \infty} G (t)=0,
    \end{aligned}
    \end{equation}
for some positive constant  $C_3.$ The last statement follows from lemma \eqref{approximation of G}.
Now applying the above inequalities \eqref{essential inequalities  1} we have
\begin{equation}
\label{ess-1'}
    \begin{aligned}
        |I_4|&\leq-\frac{x}{2\sqrt{\pi}}\int_0^{t}\frac{e^{-\frac{x^2}{4 (t-\tau)}}}{(t-\tau)^{\frac{3}{2}}}\frac{t-\tau}{\sqrt{t}} G(\tau)d\tau\\
        & \leq -\frac{x}{2\sqrt{\pi t}}\int_0^{t}\frac{e^{-\frac{x^2}{4 (t-\tau)}}}{\sqrt{(t-\tau)}} |G(\tau)|d\tau.
    \end{aligned}
\end{equation} 
Since $G(t)\to 0$ as $t \to \infty,$ given an $\epsilon >0,$ there exists a $T$  such that $|G (t)| \leq \epsilon$ for $t\geq T.$

\noindent
Now split the above integrals into two parts:
\begin{equation}
\label{ess-2'}
\begin{aligned}
   \int_0^{t}\frac{e^{-\frac{x^2}{4 (t-\tau)}}}{\sqrt{(t-\tau)}} |G(\tau)|d\tau= \int_{0}^{T} \frac{e^{-\frac{x^2}{4 (t-\tau)}}}{\sqrt{(t-\tau)}} |G(\tau)|d\tau+\int_{T}^t \frac{e^{-\frac{x^2}{4 (t-\tau)}}}{\sqrt{(t-\tau)}} |G(\tau)|d\tau.
\end{aligned}
    \end{equation}

Now 

\begin{equation}
\label{ess-3'}
\begin{aligned}
\int_{0}^{T}  \frac{e^{-\frac{x^2}{4 (t-\tau)}}}{\sqrt{(t-\tau)}} |G(\tau)|d\tau&\leq C_3 \int_{0}^{T} \frac{e^{-\frac{x^2}{4 (t-\tau)}}}{\sqrt{(t-\tau)}} d\tau\\
\int_{T}^{t}\frac{e^{-\frac{x^2}{4 (t-\tau)}}}{\sqrt{(t-\tau)}} |G(\tau)|d\tau&\leq \epsilon \int_{T}^{t}\frac{e^{-\frac{x^2}{4 (t-\tau)}}}{\sqrt{(t-\tau)}} d\tau\\
&\leq \epsilon \int_{0}^{t} \frac{e^{-\frac{x^2}{4 (t-\tau)}}}{\sqrt{(t-\tau)}} d\tau\leq \epsilon \int_{0}^{t} \frac{1}{\sqrt{(t-\tau)}} d\tau=2\epsilon \sqrt{t}.
\end{aligned}
    \end{equation}
From equations \eqref{ess-1'},\eqref{ess-2'}, \eqref{ess-3'}, it follows that $|I_4| \to 0$ as $t\to \infty.$ Next we consider $I_5.$ By the dominated convergence theorem and lemma \eqref{approximation of G} we get the following limit.
\begin{equation*}
    \begin{aligned}
        & \displaystyle{\lim_{t \to \infty}}I_5=\frac{x}{\alpha{\pi}}\Big[\phi_0(-\infty)+\phi_0(\infty)\Big]\int_0^{\infty}\frac{e^{-\frac{x^2}{4 \tau}}}{\tau^{\frac{3}{2}}}d\tau\\&=-\frac{2}{\alpha\sqrt{\pi}}\Big[\phi_0(-\infty)+\phi_0(\infty)\Big].
    \end{aligned}
\end{equation*}
Therefore, $$\displaystyle{\lim_{t \to \infty}}\sqrt{t}\phi(x,t)=-\frac{\phi_0(-\infty)}{\sqrt{\pi}}x+\frac{2}{\alpha\sqrt{\pi}}\Big[\phi_0(-\infty)+\phi_0(\infty)\Big].$$
Next consider $\phi_x.$ From \eqref{definition 6} we get \begin{equation*}
    \begin{aligned}
        \phi_x(x,t)&=\frac{-1}{4\sqrt{\pi}(t)^{\frac{3}{2}}}\int_0^{\infty}\phi_0(-\xi)\Big\{(\xi-x)e^{-\frac{(x-\xi)^2}{4t}}+(x+\xi)e^{-\frac{(x+\xi)^2}{4t}}\Big\}d\xi\\&+\int_0^t\frac{e^{\frac{-x^2}{4\tau} }}{\sqrt{\pi\tau}}G^{\prime}(t-\tau)d\tau+\frac{G(0)}{\sqrt{\pi t}}e^{\frac{-x^2}{4t}}.
    \end{aligned}
\end{equation*}
Changing the variable in the first integral and applying the integration by parts in the second integral, we obtain \begin{equation}\label{equality 5}
    \begin{aligned}
         \phi_x(x,t)&=\frac{1}{\sqrt{\pi t}}\Big[\int_{-\infty}^{\frac{x}{2\sqrt{t}}}ue^{-u^2}\phi_0(2u\sqrt{t}-x)du-\int_{\frac{x}{2\sqrt{t}}}^{\infty}ve^{-v^2}\phi_0(x-2v\sqrt{t})dv\Big]\\&+\frac{x^2}{4\sqrt{\pi}}\int_0^t\frac{e^{-\frac{x^2}{4\tau}}}{\tau^{\frac{5}{2}}}G(t-\tau)d\tau-\frac{1}{2\sqrt{\pi}}\int_0^t\frac{e^{-\frac{x^2}{4\tau}}}{\tau^{\frac{3}{2}}}G(t-\tau)d\tau-\frac{G(0)}{\sqrt{\pi t}}e^{\frac{-x^2}{4t}}+\frac{G(0)}{\sqrt{\pi t}}e^{\frac{-x^2}{4t}}\\&=J_1+J_2+J_3
    \end{aligned}
\end{equation} where \begin{equation*}
    \begin{aligned}
        &J_1=\frac{1}{\sqrt{\pi t}}\Big[\int_{-\infty}^{0}ue^{-u^2}\Big[\chi_{(-\infty,\frac{x}{2\sqrt{t}}]}\phi_0(2u\sqrt{t}-x)\Big]du-\int_{0}^{\infty}ve^{-v^2}\Big[\chi_{[\frac{x}{2\sqrt{t}},\infty)}\phi_0(x-2v\sqrt{t})\Big]dv\Big],\\&
        J_2=\frac{x^2}{4\sqrt{\pi}}\int_0^{\infty}\frac{e^{-\frac{x^2}{4\tau}}}{\tau^{\frac{5}{2}}}\chi_{[0,t]}G(t-\tau)d\tau,\\&J_3=-\frac{1}{2\sqrt{\pi}}\int_0^{\infty}\frac{e^{-\frac{x^2}{4\tau}}}{\tau^{\frac{3}{2}}}\chi_{[0,t]}G(t-\tau)d\tau.
    \end{aligned}
\end{equation*}  Then by the dominated convergence theorem we get \begin{equation*}
    \begin{aligned}
        \displaystyle{\lim_{t \to \infty}}\sqrt{t}J_1&=\frac{\phi_0(-\infty)}{\sqrt{\pi}}\Big[\int_{-\infty}^0ue^{-u^2}du-\int_{0}^{\infty}ve^{-v^2}dv\Big]\\&=-\frac{\phi_0(-\infty)}{\sqrt{\pi}}.
    \end{aligned}
\end{equation*}
Now we have \begin{equation*}
    \begin{aligned}
        \sqrt{t}J_2&=\frac{x^2}{4\sqrt{\pi}}\int_0^{\infty}\frac{e^{-\frac{x^2}{4\tau}}}{\tau^{\frac{5}{2}}}\chi_{[0,t]}(\sqrt{t}-\sqrt{t-\tau})G(t-\tau)d\tau\\&+\frac{x^2}{4\sqrt{\pi}}\int_0^{\infty}\frac{e^{-\frac{x^2}{4\tau}}}{\tau^{\frac{5}{2}}}\chi_{[0,t]}\sqrt{t-\tau}G(t-\tau)d\tau=J_5+J_6.
    \end{aligned}
\end{equation*} By the similar calculation to $I_4$ we get $\displaystyle{\lim_{t \to \infty}}|J_5|=0.$
 By  the lemma \eqref{approximation of G} and the  dominated convergence theorem we get
 \begin{equation*}
    \begin{aligned}
        \displaystyle{\lim_{t \to \infty}}J_6&
        =-\frac{2}{\alpha\sqrt{\pi}x}\Big[\phi_0(-\infty)+\phi_0(\infty)\Big].
    \end{aligned}
\end{equation*}
Next we have \begin{equation*}
    \begin{aligned}
         \sqrt{t}J_3&=-\frac{1}{2\sqrt{\pi}}\int_0^{\infty}\frac{e^{-\frac{x^2}{4\tau}}}{\tau^{\frac{3}{2}}}\chi_{[0,t]}(\sqrt{t}-\sqrt{t-\tau})G(t-\tau)d\tau\\&-\frac{1}{2\sqrt{\pi}}\int_0^{\infty}\frac{e^{-\frac{x^2}{4\tau}}}{\tau^{\frac{3}{2}}}\chi_{[0,t]}\sqrt{t-\tau}G(t-\tau)d\tau=J_7+J_8.
    \end{aligned}
\end{equation*} Therefore, as above we have $\displaystyle{\lim_{t \to \infty}}|J_7|=0$ and 
\begin{equation*}
\begin{aligned}
    \displaystyle{\lim_{t \to \infty}}J_8&
    =\frac{2}{\alpha x \sqrt{\pi}}\Big[\phi_0(-\infty)+\phi_0(\infty)\Big].
\end{aligned}
\end{equation*}
 Hence, from \eqref{equality 5} we get \begin{equation*}
    \begin{aligned}
        \displaystyle{\lim_{t \to \infty}}\sqrt{t}\phi_x(x,t)&
        =-\frac{\phi_0(-\infty)}{\sqrt{\pi}}.
    \end{aligned}
\end{equation*}
\end{proof} 
Similarly, we can prove the following lemma.
\begin{lemma}\label{approximation of psi}
   Under assumption \eqref{extracondition} along with $\alpha<0,$$\sqrt{t}\psi$ and $\sqrt{t}\psi_x$ converge uniformly in compact subsets of $(0, \infty)$, and the limits are given by \begin{equation*}
        \begin{aligned}
            &\displaystyle{\lim_{t \to \infty}}\sqrt{t}\psi(x,t)= \frac{\phi_0(\infty)}{\sqrt{\pi}}x-\frac{2}{\alpha\sqrt{\pi}}\Big[\phi_0(-\infty)+\phi_0(\infty)\Big],\\&
            \displaystyle{\lim_{t \to \infty}}\sqrt{t}\psi_x(x,t)=\frac{\phi_0(\infty)}{\sqrt{\pi}}.
        \end{aligned}
    \end{equation*}
\end{lemma}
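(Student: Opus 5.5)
The proof mirrors that of Lemma \ref{approximation of phi}, with $x>0$ and the data $\phi_0(\xi)$, $\xi>0$, in place of $x<0$ and $\phi_0(-\xi)$. From \eqref{formula for psi}, after the substitution $\tau\mapsto t-\tau$ in the boundary integral, I would write
\begin{equation*}
\sqrt{t}\psi(x,t)=\tilde I_3+\tilde I_4+\tilde I_5 .
\end{equation*}
Here
\begin{equation*}
\tilde I_3=\frac{1}{2\sqrt{\pi}}\int_0^\infty\Big\{e^{-\frac{(x-\xi)^2}{4t}}-e^{-\frac{(x+\xi)^2}{4t}}\Big\}\phi_0(\xi)\,d\xi .
\end{equation*}
The other two terms are
\begin{equation*}
\tilde I_4=\frac{x}{2\sqrt{\pi}}\int_0^t\frac{e^{-\frac{x^2}{4\tau}}}{\tau^{3/2}}\big(\sqrt t-\sqrt{t-\tau}\big)G(t-\tau)\,d\tau,\qquad
\tilde I_5=\frac{x}{2\sqrt{\pi}}\int_0^t\frac{e^{-\frac{x^2}{4\tau}}}{\tau^{3/2}}\sqrt{t-\tau}\,G(t-\tau)\,d\tau .
\end{equation*}

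\textbf{Term $\tilde I_3$.} Substitute $\xi=2\sqrt t\,y$ and write $e^{xy/\sqrt t}-e^{-xy/\sqrt t}=\int_{-1}^1 \frac{xy}{\sqrt t}e^{sxy/\sqrt t}\,ds$. Then $2\sqrt t\,(\cdots)$ becomes $2x\int_{-1}^1\int_0^\infty y e^{-y^2}e^{sxy/\sqrt t}\phi_0(2\sqrt t y)\,dy\,ds$. Since $\phi_0$ is bounded (from \eqref{extracondition}, $u_0+\alpha\in L^1(0,\infty)$ gives $\phi_0$ bounded with a limit $\phi_0(\infty)$), and $x$ ranges over a compact set, dominated convergence applies. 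This gives $\tilde I_3\to \frac{x\phi_0(\infty)}{\sqrt\pi}\int_0^\infty 2ye^{-y^2}dy=\frac{\phi_0(\infty)}{\sqrt\pi}x$. This avoids the cut at $T$ and L'Hospital's rule.

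\textbf{Terms $\tilde I_4$ and $\tilde I_5$.} For $\tilde I_4$, use $\sqrt t-\sqrt{t-\tau}\le \tau/\sqrt t$, boundedness of $G$, and $G(t)\to0$ (Lemma \ref{approximation of G}). Splitting at a large $T$ exactly as in \eqref{ess-1'}--\eqref{ess-3'} shows $\tilde I_4\to0$. For $\tilde I_5$, Lemma \ref{approximation of G} gives $\sqrt{t-\tau}\,G(t-\tau)\to -\frac{2}{\alpha\sqrt\pi}[\phi_0(-\infty)+\phi_0(\infty)]$ pointwise, with a uniform bound $\sup_s\sqrt s|G(s)|<\infty$. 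Dominated convergence with dominating function $C\tau^{-3/2}e^{-x^2/4\tau}$, which is integrable for $x>0$, together with $\frac{x}{2\sqrt\pi}\int_0^\infty \tau^{-3/2}e^{-x^2/4\tau}d\tau=1$, yields $\tilde I_5\to-\frac{2}{\alpha\sqrt\pi}[\phi_0(-\infty)+\phi_0(\infty)]$. Summing the three limits gives the claimed limit of $\sqrt t\psi$.

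\textbf{The derivative $\psi_x$.} Start from \eqref{definition 5} and integrate by parts in the $G'$ term. The $G(0)$ boundary terms cancel, as in \eqref{equality 5}, leaving $\tilde J_1+\tilde J_2+\tilde J_3$ with the signs reversed. The initial-data piece, after the substitutions $u=(\xi-x)/(2\sqrt t)$ and $v=(x+\xi)/(2\sqrt t)$, gives $\sqrt t\tilde J_1\to\frac{\phi_0(\infty)}{\sqrt\pi}\big[\int_{-\infty}^0... \big]$ arranged so that the limit is $\frac{\phi_0(\infty)}{\sqrt\pi}\int_{\mathbb R}|u|e^{-u^2}du=\frac{\phi_0(\infty)}{\sqrt\pi}$. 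The two $G$-terms each again split into a vanishing $(\sqrt t-\sqrt{t-\tau})$ part and a dominated-convergence part. Their limits are $\pm\frac{2}{\alpha\sqrt\pi x}[\phi_0(-\infty)+\phi_0(\infty)]$, using $\frac{x^2}{4\sqrt\pi}\int_0^\infty\tau^{-5/2}e^{-x^2/4\tau}d\tau=\frac{1}{x}$ and $\frac{1}{2\sqrt\pi}\int_0^\infty\tau^{-3/2}e^{-x^2/4\tau}d\tau=\frac1x$, so they cancel. This yields $\sqrt t\psi_x\to\phi_0(\infty)/\sqrt\pi$.

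\textbf{Uniformity and main obstacle.} Uniformity on compacts $[a,b]\subset(0,\infty)$ follows because every dominating function and every $\epsilon$-splitting bound above depends on $x$ only through $a\le x\le b$. The main obstacle is the cancellation in $\psi_x$. Individually, $\sqrt t\tilde J_2$ and $\sqrt t\tilde J_3$ have limits of size $1/x$, which blow up near $0$, and the cancellation must be verified through equal limits. Since both are dominated-convergence limits of the same function $\sqrt{s}G(s)$, I would prove it by showing the difference integrand $\big(\frac{x^2}{4\tau^{5/2}}-\frac{1}{2\tau^{3/2}}\big)e^{-x^2/4\tau}$ integrates to zero. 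It is $\partial_\tau$ of $\tau^{-1/2}e^{-x^2/4\tau}$, so its integral over $(0,\infty)$ is $0$.
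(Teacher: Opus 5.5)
Your proposal is correct and follows essentially the same route the paper intends (the paper proves this lemma only by saying it is similar to Lemma~\ref{approximation of phi}): the same splitting $\sqrt{t}=(\sqrt t-\sqrt{t-\tau})+\sqrt{t-\tau}$, the same use of Lemma~\ref{approximation of G} with dominated convergence, and the same integration by parts in the $G'$ term. Three of your details go beyond the paper but do not change the route: you treat $\tilde I_3$ with a single dominated-convergence step instead of the cut at $T$ plus L'Hospital, you verify the $\tilde J_2$/$\tilde J_3$ cancellation via $\partial_\tau\big(\tau^{-1/2}e^{-x^2/4\tau}\big)$, and you give the uniformity-on-compacts argument that the paper leaves implicit.
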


Next, we characterize the asymptotic behavior of the weak solution of \eqref{IVP}.
\subsubsection{Proof of the theorem \eqref{limit of solution}}
\begin{proof}
   According to theorem \eqref{explicit solution}, the unique weak solution is given by \begin{equation*}
         u(x,t)=\begin{cases}
           -2\frac{\phi_x(x,t)}{\phi(x,t)}, &x<0,t>0\\
            -2\frac{\psi_x(x,t)}{\psi(x,t)}-\alpha, &x>0,t>0.
        \end{cases}
    \end{equation*} Therefore, for $x<0,t>0$, lemma \eqref{approximation of phi} gives \begin{equation*}
        \begin{aligned}
            \displaystyle{\lim_{t \to \infty}}u(x,t)&=-2\displaystyle{\lim_{t \to \infty}}\frac{\sqrt{t}\phi_x(x,t)}{\sqrt{t}\phi(x,t)}\\
            &=\frac{2\phi_0(-\infty)}{-\phi_0(-\infty)x-\frac{2}{\alpha}\Big[\phi_0(-\infty)+\phi_0(\infty)\Big]}\\
            &= \frac{-2\alpha\phi_0(-\infty)}{2\Big[\phi_0(-\infty)+\phi_0(\infty)\Big]+\alpha \phi_0(-\infty)x}
        \end{aligned}
    \end{equation*}  and for $x>0,t>0$, lemma \eqref{approximation of psi} gives \begin{equation*}
        \begin{aligned}
              \displaystyle{\lim_{t \to \infty}}u(x,t)&=-2 \displaystyle{\lim_{t \to \infty}}\frac{\sqrt{t}\psi_x(x,t)}{\sqrt{t}\psi(x,t)}-\alpha\\
              & =\frac{-2\alpha \phi_0(\infty)}{\alpha \phi_0(\infty)x-2\Big[\phi_0(-\infty)+\phi_0(\infty)\Big]}-\alpha\\
              &=\frac{\alpha\Big[2 \phi_0(-\infty)-\alpha \phi_0(\infty)x\Big]}{\alpha\phi_0(\infty)x-2\Big[\phi_0(-\infty)+\phi_0(\infty)\Big]}.
        \end{aligned}
    \end{equation*}
\end{proof}
\subsection{Graphical representation of the solution}
In this section, we show the graph which depicts that the explicit solution converges to the steady state solution for a large value of $t$ for $u_0=\begin{cases}
    0, & x<0\\
    -\alpha, & x>0
\end{cases}$ and $\alpha<0.$ 
\noindent

    For the above initial data we get 
  \begin{equation*}
        \begin{aligned}
            G(t)
            &=\frac{1}{\sqrt{\pi}}\int_0^t\frac{1}{\sqrt{s(t-s)}}\Big[\frac{1}{\sqrt{\pi}}+\frac{\alpha\sqrt{t-s}}{4}e^{\frac{\alpha^2(t-s)}{16}}\operatorname{erfc}\Big(\frac{-\alpha\sqrt{t-s}}{4}\Big)\Big] ds\\
            &=\frac{1}{\sqrt{\pi}}\int_0^t\frac{1}{\sqrt{t-s}}\Big[\frac{1}{\sqrt{\pi s}}+\frac{\alpha}{4}e^{\frac{\alpha^2s}{16}}\operatorname{erfc}\Big(\frac{-\alpha\sqrt{s}}{4}\Big)\Big] ds\\
            &=\frac{1}{\pi}\int_0^1\frac{1}{\sqrt{{v(1-v)}}}dv+\frac{\alpha}{4\sqrt{\pi}}\int_0^t\frac{h(s)}{\sqrt{t-s}}ds=h(t)
        \end{aligned}
    \end{equation*}
where the last line follows from the calculation \eqref{calculation}.

\noindent
Hence, from \eqref{derivative of h}  we get $G^{\prime}(t)=\frac{\alpha}{4\sqrt{\pi t}}+\frac{\alpha^2}{16}h(t).$ Therefore 
\begin{equation*}
    \begin{aligned}
         &\phi(x,t)=\frac{-1}{2\sqrt{\pi t}}\int_0^\infty \Big\{e^{-\frac{(x-\xi)^2}{4t}}-e^{-\frac{(x+\xi)^2}{4t}}\Big\}d\xi-\frac{x}{2\sqrt{\pi}}\int_0^te^{-\frac{x^2}{4 (t-\tau)}}\frac{h(\tau)}{(t-\tau)^{\frac{3}{2}}}d\tau\\
         & \psi(x,t)=\frac{1}{2\sqrt{\pi t}}\int_0^\infty \Big\{e^{-\frac{(x-\xi)^2}{4t}}-e^{-\frac{(x+\xi)^2}{4t}}\Big\}d\xi+\frac{x}{2\sqrt{\pi}}\int_0^te^{-\frac{x^2}{4(t-\tau)}}\frac{h(\tau)}{(t-\tau)^{\frac{3}{2}}}d\tau\\
         & \phi_x(x,t)  =\frac{-1}{4\sqrt{\pi}(t)^{\frac{3}{2}}}\int_0^{\infty}\Big\{(\xi-x)e^{-\frac{(x-\xi)^2}{4t}}+(x+\xi)e^{-\frac{(x+\xi)^2}{4t}}\Big\}d\xi\\&+\frac{\alpha}{4\pi}\int_0^t\frac{1}{\sqrt{\tau(t-\tau)}}e^{\frac{-x^2}{4(t-\tau)} }d\tau+\frac{\alpha^2}{16\sqrt{\pi}}\int_0^t\frac{1}{\sqrt{t-\tau}}e^{\frac{-x^2}{4(t-\tau)} }h(\tau)d\tau+\frac{1}{\sqrt{\pi t}}e^{\frac{-x^2}{4t}}\\
           &  \psi_x(x,t) =\frac{1}{4\sqrt{\pi}(t)^{\frac{3}{2}}}\int_0^{\infty}\Big\{(\xi-x)e^{-\frac{(x-\xi)^2}{4t}}+(x+\xi)e^{-\frac{(x+\xi)^2}{4t}}\Big\}d\xi\\&-\frac{\alpha}{4\pi}\int_0^t\frac{1}{\sqrt{\tau(t-\tau)}}e^{\frac{-x^2}{4(t-\tau)} }d\tau-\frac{\alpha^2}{16\sqrt{\pi}}\int_0^t\frac{1}{\sqrt{t-\tau}}e^{\frac{-x^2}{4(t-\tau)} }h(\tau)d\tau-\frac{1}{\sqrt{\pi t}}e^{\frac{-x^2}{4t}}.
    \end{aligned}
\end{equation*}
Hence, from \eqref{explicit formula} we get the explicit formula. In the graph below solid lines are the explicit solution for $\alpha=-1$ and the dashed lines are the steady state solution 
\begin{equation*}
      \displaystyle{\lim_{t \to \infty}}  u(x,t)=
      \begin{cases}
          \frac{2 }{4-x}, &x<0\\
         \frac{x+2}{x+4}, &x>0.
      \end{cases}
\end{equation*}

\begin{figure}[htbp]
    \centering
    
    \begin{subfigure}[b]{0.32\linewidth}
        \centering
        \includegraphics[width=\textwidth]{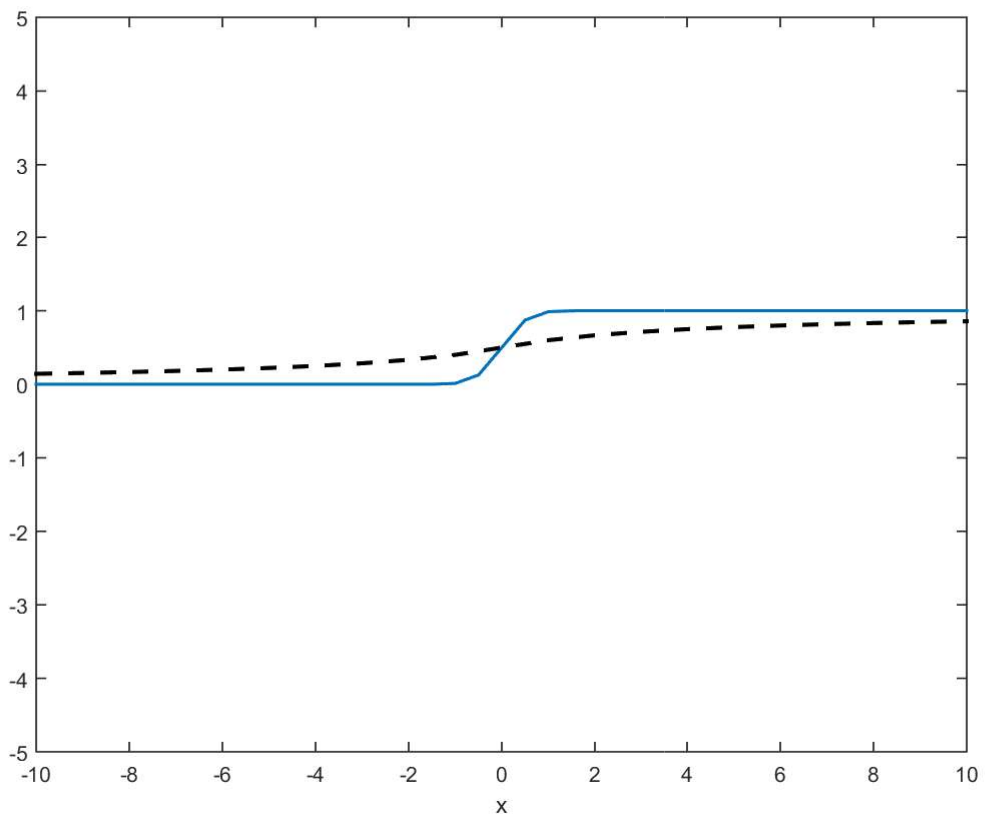}
        \caption{$t = 0.1$}
    \end{subfigure}
    \hfill
   \begin{subfigure}[b]{0.32\linewidth}
        \centering
        \includegraphics[width=\textwidth]{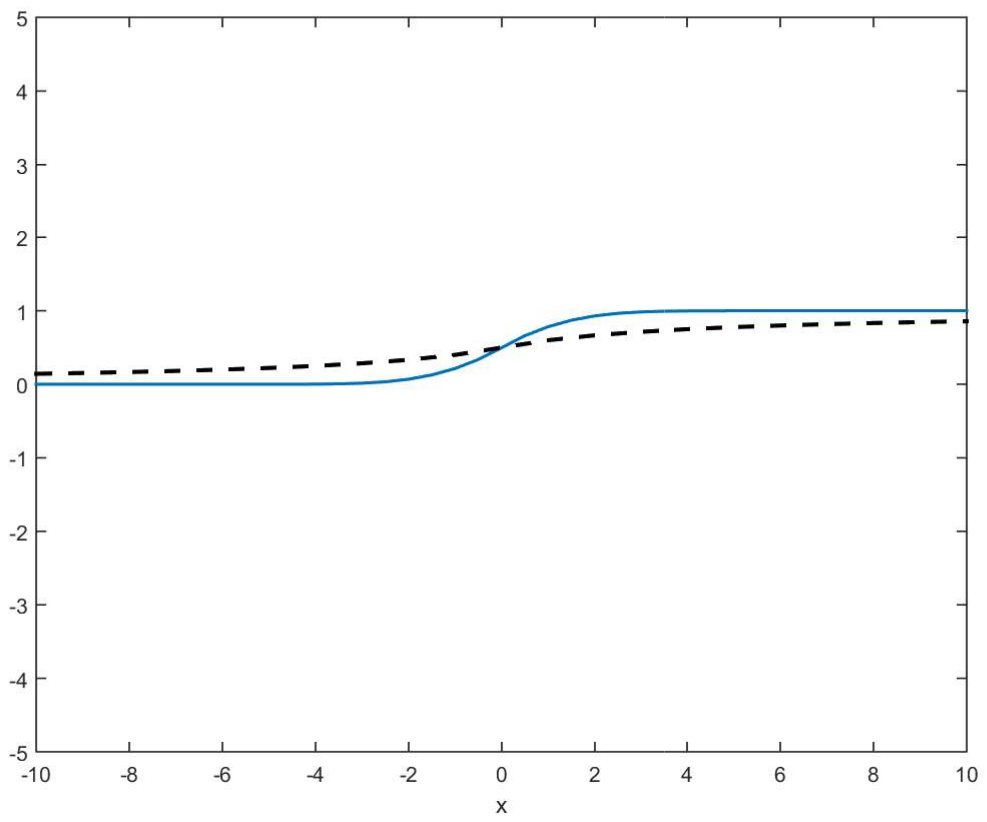}
        \caption{$t = 1$}
    \end{subfigure}
    \begin{subfigure}[b]{0.32\linewidth}
        \centering
        \includegraphics[width=\textwidth]{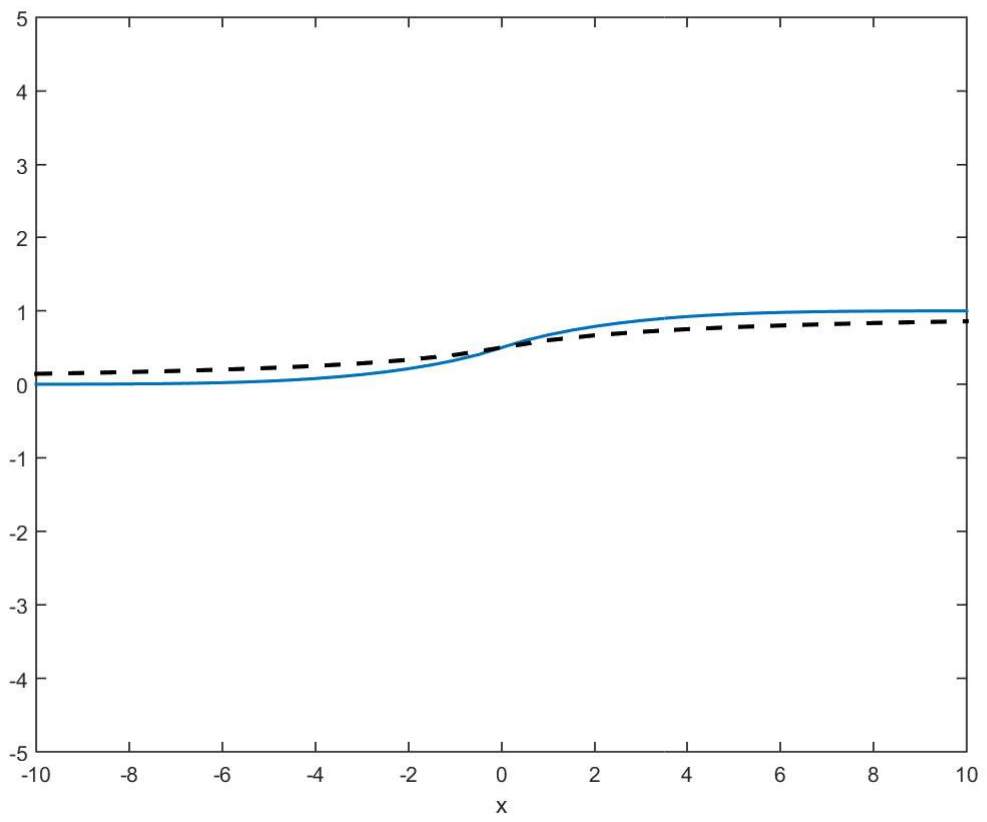}
        \caption{$t = 5$}
    \end{subfigure}
    \begin{subfigure}[b]{0.32\linewidth}
        \centering
        \includegraphics[width=\textwidth]{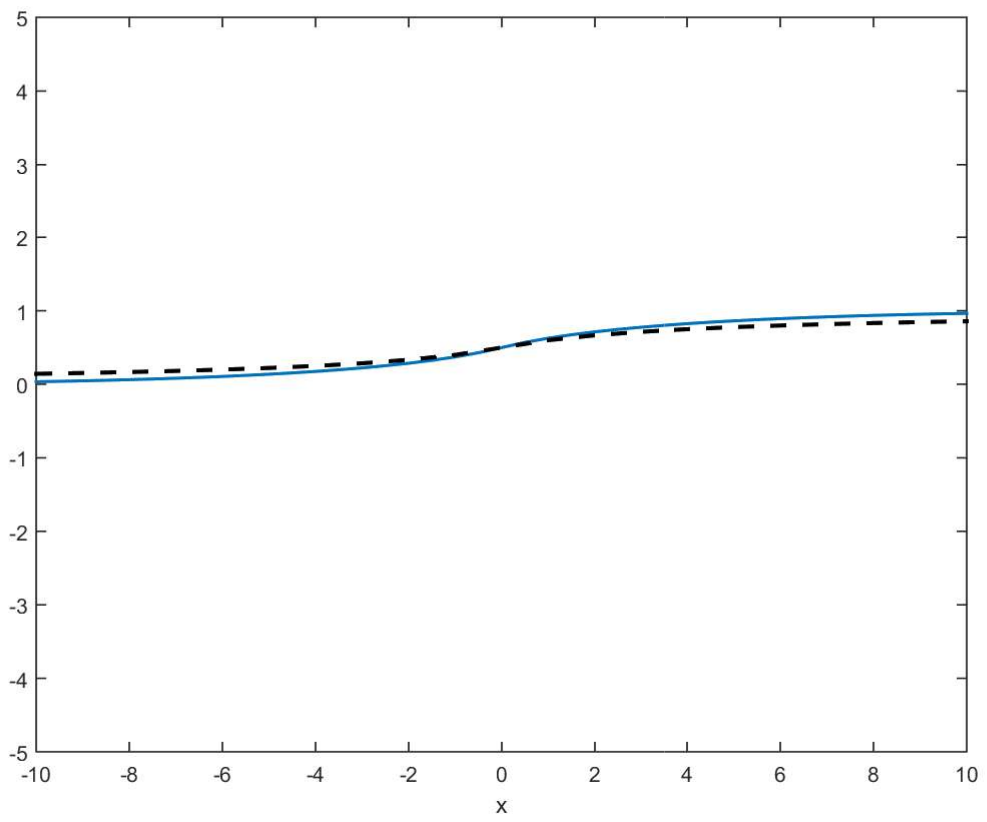}
        \caption{$t = 20$}
    \end{subfigure}
     \hfill 
    \begin{subfigure}[b]{0.32\linewidth}
        \centering
        \includegraphics[width=\textwidth]{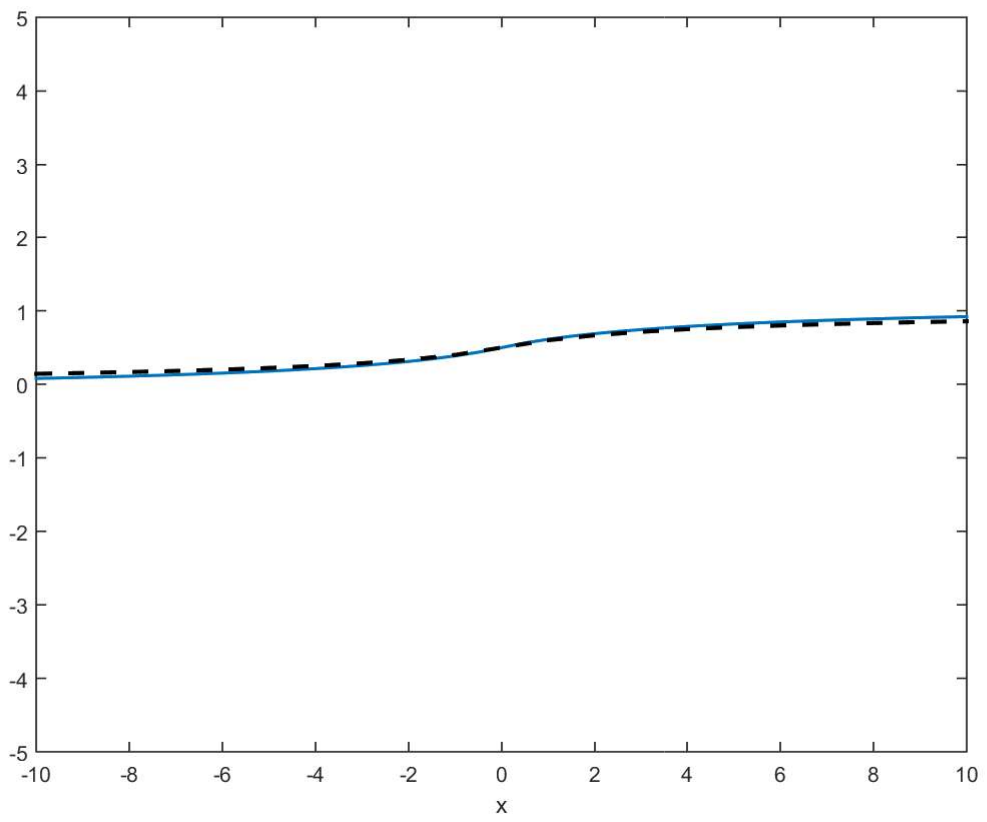}
        \caption{$t = 50$}
    \end{subfigure}
    \hfill
   \begin{subfigure}[b]{0.32\linewidth}
        \centering
        \includegraphics[width=\textwidth]{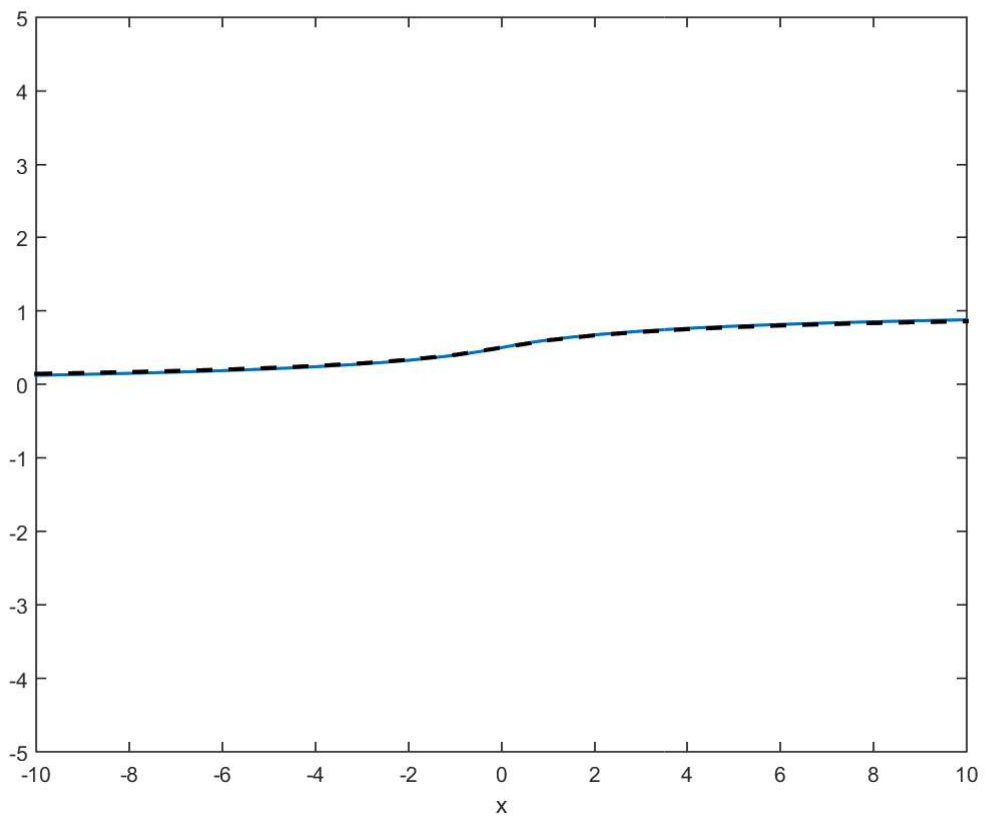}
        \caption{$t = 200$}
    \end{subfigure}

    \caption{Solid lines are the explicit solution at a given time $t$ and for the initial data $u_0=\begin{cases}
    0, & x<0\\
    -\alpha, & x>0
\end{cases}, \alpha=-1$ and dashed lines are the steady state as in the last theorem. The solution approaches to the steady state.}
    \label{fig:1}
\end{figure}

\newpage
  \section{Large time behavior of the solution for $\alpha>0$}
\subsection{Asymptotic behavior of the intermediate function $G$}
We assume conditions \eqref{extracondition} along with $\alpha>0.$ Then the boundary condition $G(t)$ satisfies an asymptotic behavior, which is given by the following lemma.
\begin{lemma}\label{approximation of G-1}
      Under assumption \eqref{extracondition} along with $\alpha>0,$  $G(t)$ obtained in lemma \eqref{properties of G}  satisfies the following: $$\displaystyle{\lim_{t \to \infty}}\sqrt{t}e^{-\frac{\alpha^2t}{16}}G(t)=\frac{\alpha}{2\sqrt{\pi}}\int_0^{\infty}\int_0^{\infty}e^{-\frac{\alpha^2s}{16}}ue^{-u^2}\theta(2u\sqrt{s})duds.$$
\end{lemma}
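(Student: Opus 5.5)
The plan is to reuse the convolution representation \eqref{expression of G(t)} from the proof of Lemma~\ref{approximation of G}, now with $\alpha>0$, and to mimic the splitting $\sqrt t=(\sqrt t-\sqrt s)+\sqrt s$ used there. With $H(x)=\frac{1}{\sqrt\pi}-xe^{x^2}\operatorname{erfc}(x)$ and $f_1(s)=\int_0^\infty \frac{ue^{-u^2}\theta(2u\sqrt s)}{\sqrt s}\,du$, I will use the form
\begin{equation*}
G(t)=\frac{1}{\sqrt\pi}\int_0^t K(t-s)\,f_1(s)\,ds,\qquad K(\tau):=\frac{H\big(-\frac{\alpha\sqrt\tau}{4}\big)}{\sqrt\tau}.
\end{equation*}
The target right-hand side is exactly $\frac{\alpha}{2\sqrt\pi}\int_0^\infty e^{-\alpha^2 s/16}\sqrt s\,f_1(s)\,ds$, because $\sqrt s f_1(s)=\int_0^\infty ue^{-u^2}\theta(2u\sqrt s)\,du$. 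This integral is finite since $|\sqrt s f_1(s)|\le \|\theta\|_\infty/2$.

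\textbf{Step 1 (kernel asymptotics).} For $\alpha>0$ we have $H(-y)=\frac{1}{\sqrt\pi}+ye^{y^2}\operatorname{erfc}(-y)$ with $y=\frac{\alpha\sqrt\tau}{4}$. Since $\operatorname{erfc}(-y)\to 2$, this gives
\begin{equation*}
e^{-\alpha^2\tau/16}K(\tau)\to\frac{\alpha}{2}\quad\text{as }\tau\to\infty.
\end{equation*}
The left side is also bounded on $(0,\infty)$ by $C\big(1+\tau^{-1/2}\big)$, which is integrable near $0$. Equivalently, by \eqref{derivative of h}, $K(\tau)=\frac{4}{\alpha}h'(\tau)$ with $h(\tau)=e^{\alpha^2\tau/16}\operatorname{erfc}\big(-\frac{\alpha\sqrt\tau}{4}\big)$.

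\textbf{Step 2 (splitting).} I will write $\sqrt t=(\sqrt t-\sqrt s)+\sqrt s$ inside the integral:
\begin{equation*}
\sqrt t\,e^{-\frac{\alpha^2t}{16}}G(t)=\underbrace{\frac{1}{\sqrt\pi}\int_0^t e^{-\frac{\alpha^2(t-s)}{16}}K(t-s)\,e^{-\frac{\alpha^2 s}{16}}(\sqrt t-\sqrt s)f_1(s)\,ds}_{A(t)}+\underbrace{\frac{1}{\sqrt\pi}\int_0^t e^{-\frac{\alpha^2(t-s)}{16}}K(t-s)\,e^{-\frac{\alpha^2 s}{16}}\sqrt s f_1(s)\,ds}_{B(t)}.
\end{equation*}
For $B(t)$, I will write it as $\frac{1}{\sqrt\pi}\int_0^\infty \chi_{[0,t]}(s)\,\big[e^{-\alpha^2(t-s)/16}K(t-s)\big]\,e^{-\alpha^2 s/16}\sqrt s f_1(s)\,ds$. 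For each fixed $s$ the bracket tends to $\frac{\alpha}{2}$ by Step 1. The integrand is dominated by an integrable function: the bracket is bounded away from $\tau=t-s$ near $0$, and the short window $s\in(t-1,t)$ contributes at most $Ce^{-\alpha^2(t-1)/16}\int_0^1\tau^{-1/2}d\tau\to0$. Dominated convergence then gives $B(t)\to$ the stated limit.

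\textbf{Step 3 (the remainder $A(t)$; main obstacle).} I need $A(t)\to0$. The earlier bound $\sqrt t-\sqrt s\le (t-s)/\sqrt t$ leads to estimating
\begin{equation*}
\frac{1}{\sqrt t}\int_0^t (t-s)\,e^{-\frac{\alpha^2 s}{16}}|f_1(s)|\,ds .
\end{equation*}
This only vanishes if the mass of $e^{-\alpha^2 s/16}f_1(s)$ near $s\approx 0$ is handled carefully: the factor $(t-s)$ is of order $t$ there, so crude absolute values give a bound of order $\sqrt t$, not $o(1)$.

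What I would try first is to exploit cancellation rather than absolute values. I will integrate by parts in $s$, using $f_1(s)=\int_0^\infty e^{-u^2}\theta'(2u\sqrt s)\,du$ and $\theta\in W^{2,\infty}$, to rewrite $e^{-\alpha^2 s/16}f_1(s)$ as a derivative plus a remainder. Alternatively, I will use the Caputo relation \eqref{caputo DE} to trade the $\sqrt t-\sqrt s$ weight for an $O(1)$ kernel. If neither yields decay, I will check directly whether $\int_0^\infty e^{-\alpha^2 s/16}f_1(s)\,ds$ vanishes under \eqref{extracondition}. This is the condition under which the $\sqrt t$ normalization is consistent: otherwise $e^{-\alpha^2t/16}G(t)$ has the nonzero limit $\frac{\alpha}{2\sqrt\pi}\int_0^\infty e^{-\alpha^2 s/16}f_1(s)\,ds$ and $A(t)$ grows like $\sqrt t$. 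This step is where I expect the real difficulty, and I would verify it carefully before assembling Steps 2 and 3 into the stated limit.
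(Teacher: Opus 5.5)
Your Step~3 cannot be completed, because the lemma as stated is false. The test you propose at the end of your proposal settles this negatively.

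For $\alpha>0$ nothing in $A(t)$ can cancel. Since $\phi_0$ is an exponential, $\theta=\phi_0(-\cdot)+\phi_0(\cdot)>0$, so $f_1(s)=\int_0^\infty\frac{\xi e^{-\xi^2/(4s)}}{4s^{3/2}}\theta(\xi)\,d\xi>0$. The kernel also satisfies $e^{-\alpha^2\tau/16}K(\tau)=\frac{e^{-\alpha^2\tau/16}}{\sqrt{\pi\tau}}+\frac{\alpha}{4}\operatorname{erfc}\big(-\frac{\alpha\sqrt\tau}{4}\big)\ge\frac{\alpha}{4}$. So neither integration by parts nor the Caputo relation can help, and directly
\[
\sqrt t\,e^{-\frac{\alpha^2t}{16}}G(t)\ \ge\ \frac{\alpha\sqrt t}{4\sqrt\pi}\int_0^t e^{-\frac{\alpha^2 s}{16}}f_1(s)\,ds .
\]
Use Fubini and the first-passage Laplace transform $\int_0^\infty\frac{\xi}{2\sqrt\pi s^{3/2}}e^{-\frac{\xi^2}{4s}}e^{-ps}\,ds=e^{-\xi\sqrt p}$ with $\sqrt p=\alpha/4$. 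This gives
\[
\int_0^\infty e^{-\frac{\alpha^2 s}{16}}f_1(s)\,ds=\frac{\sqrt\pi}{2}\int_0^\infty e^{-\frac{\alpha\xi}{4}}\theta(\xi)\,d\xi>0,
\]
so $\sqrt t\,e^{-\alpha^2t/16}G(t)\to+\infty$. Your dominated-convergence argument from Step~2, run without the factor $\sqrt t$, proves the corrected statement
\[
\lim_{t\to\infty}e^{-\frac{\alpha^2t}{16}}G(t)=\frac{\alpha}{4}\int_0^\infty e^{-\frac{\alpha\xi}{4}}\theta(\xi)\,d\xi .
\]
The paper's own example confirms this. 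For $u_0=0$ on $x<0$ and $u_0=-\alpha$ on $x>0$, one has $\theta\equiv2$ and $G=h$. Then $\sqrt t\,e^{-\alpha^2t/16}G(t)=\sqrt t\,\operatorname{erfc}(-\alpha\sqrt t/4)\sim2\sqrt t$, whereas the claimed right-hand side equals $8/(\alpha\sqrt\pi)$.

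The paper's proof uses your decomposition and has the same hole. It disposes of the $A$-part ``by a similar calculation as in the previous section''. That calculation relied on $|H(-\alpha\sqrt s/4)|\le C_1/(1+\alpha^2s/16)$, which holds only for $\alpha<0$; for $\alpha>0$ one has $H(-\alpha\sqrt s/4)\sim\frac{\alpha\sqrt s}{2}e^{\alpha^2s/16}$. Its treatment of the $B$-part ($K_1\to0$, $K_2\to$ the stated integral) agrees with your Step~2, which is correct.

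The downstream conclusion $u\to-\alpha/2$ still holds if $\phi,\psi,\phi_x,\psi_x$ are normalized by $e^{-\alpha^2t/16}$ alone, since only the ratios $\phi_x/\phi$ and $\psi_x/\psi$ enter.

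A minor point: the identity $f_1(s)=\int_0^\infty e^{-u^2}\theta'(2u\sqrt s)\,du$ that you borrow from the paper omits the boundary term $\theta(0^+)/(2\sqrt s)=1/\sqrt s$. Your bound $|\sqrt s\,f_1(s)|\le\|\theta\|_\infty/2$ is the correct one to rely on.
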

\begin{proof}
By a similar calculation as in the previous section, we get 
\begin{equation*}
       \displaystyle{\lim_{t\to \infty}}\sqrt{t}e^{-\frac{\alpha^2t}{16}}G(t)=  \displaystyle{\lim_{t\to \infty}}B_1
\end{equation*}
where  
\begin{equation*}
        \begin{aligned}
            B_1&=\frac{e^{-\frac{\alpha^2t}{16}}}{\sqrt{\pi}}\int_0^{t}\frac{H\Big(\frac{-\alpha\sqrt{s}}{4}\Big)}{\sqrt{s}}\sqrt{t-s}f_1(t-s)ds\\
          &=\frac{e^{-\frac{\alpha^2t}{16}}}{\sqrt{\pi}}\int_0^{t}\frac{1}{\sqrt{s}}\Big[\frac{1}{\sqrt{\pi}}+\frac{\alpha\sqrt{s}}{4}e^{\frac{\alpha^2s}{16}}\operatorname{erfc}\Big(\frac{-\alpha\sqrt{s}}{4}\Big)\Big]\sqrt{t-s}f_1(t-s)ds\\
          &=\frac{e^{-\frac{\alpha^2t}{16}}}{\pi}\int_0^{t}\frac{1}{\sqrt{s}}\sqrt{t-s}f_1(t-s)ds+\frac{\alpha}{4\sqrt{\pi}}\int_0^{t}e^{-\frac{\alpha^2(t-s)}{16}}\operatorname{erfc}\Big(\frac{-\alpha\sqrt{s}}{4}\Big)\sqrt{t-s}f_1(t-s)ds\\
          &=K_1+K_2.
        \end{aligned}
    \end{equation*}
    Now $$\sqrt{t}f_1(t)=\int_0^{\infty}ue^{-u^2}\Big(\phi_0(-2u\sqrt{t})+\phi_0(2u\sqrt{t})\Big)du.$$

    \noindent
    Under the condition \eqref{extracondition} we get $$\displaystyle{\lim_{t\to \infty}}\sqrt{t}f_1(t)=\frac{1}{2}\Big[\phi_0(-\infty)+\phi_0(\infty)\Big].$$
    Hence $\displaystyle{\lim_{t\to \infty}}K_1=0.$ Next consider $K_2.$
    \begin{equation*}
        \begin{aligned}
            K_2&=\frac{\alpha}{4\sqrt{\pi}}\int_0^{t}e^{-\frac{\alpha^2(t-s)}{16}}\operatorname{erfc}\Big(\frac{-\alpha\sqrt{s}}{4}\Big)\sqrt{t-s}f_1(t-s)ds\\
            &=\frac{\alpha}{4\sqrt{\pi}}\int_0^{\infty}\chi_{[0,t]}e^{-\frac{\alpha^2s}{16}}\operatorname{erfc}\Big(\frac{-\alpha\sqrt{t-s}}{4}\Big)\sqrt{s}f_1(s)ds.
        \end{aligned}
    \end{equation*}
    Since integrand is bounded so we get 
    \begin{equation*}
        \begin{aligned}
            \displaystyle{\lim_{t\to \infty}}K_2=\frac{\alpha}{2\sqrt{\pi}}\int_0^{\infty}\int_0^{\infty}e^{-\frac{\alpha^2s}{16}}ue^{-u^2}\theta(2u\sqrt{s})duds.
        \end{aligned}
    \end{equation*}
    Therefore $\displaystyle{\lim_{t \to \infty}}\sqrt{t}e^{-\frac{\alpha^2t}{16}}G(t)=\frac{\alpha}{2\sqrt{\pi}}\int_0^{\infty}\int_0^{\infty}e^{-\frac{\alpha^2s}{16}}ue^{-u^2}\theta(2u\sqrt{s})duds.$

\end{proof}

\subsection{Asymptotic behavior of the weak solution}
In this section, we derive the asymptotic behavior of this solution using the asymptotic limit of $\sqrt{t}e^{-\frac{\alpha^2t}{16}}G(t)$. In order to obtain the asymptotic behavior of the solution, we first obtain the asymptotic behavior of $\phi,\psi$ and $\phi_x,\psi_x$ separately as in the following two lemmas.
\begin{lemma}\label{approximation of phi 1}
  Under assumption \eqref{extracondition} along with $\alpha>0,$ $\sqrt{t}e^{-\frac{\alpha^2t}{16}}\phi$ and $\sqrt{t}e^{-\frac{\alpha^2t}{16}}\phi_x$ converge uniformly on compact subsets of $(-\infty, 0)$,  and the limits are given by
  \begin{equation*}
        \begin{aligned}
            &\displaystyle{\lim_{t \to \infty}}\sqrt{t}e^{-\frac{\alpha^2t}{16}}\phi(x,t)=Le^{\frac{\alpha x}{4}},\\&
            \displaystyle{\lim_{t \to \infty}}\sqrt{t}e^{-\frac{\alpha^2t}{16}}\phi_x(x,t)=\frac{L\alpha}{4}e^{\frac{\alpha x}{4}}
        \end{aligned}
    \end{equation*} 
  where  $L:=\displaystyle{\lim_{t \to \infty}}\sqrt{t}e^{-\frac{\alpha^2t}{16}}G(t).$
\end{lemma}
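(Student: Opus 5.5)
The plan is to treat $\phi$ via the representation \eqref{formula for phi}, splitting it into the initial-data term and the boundary (double-layer) term carrying $G$. Multiply both by $\sqrt{t}e^{-\frac{\alpha^2t}{16}}$ and show that only the boundary term survives.

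\textbf{Initial-data term.} By the computation of $I_3$ in Lemma \eqref{approximation of phi} (which uses only the boundedness and limit of $\phi_0$), $\sqrt{t}$ times the first term of \eqref{formula for phi} has a finite limit. That limit is uniform for $x$ in compact subsets of $(-\infty,0)$. After the extra factor $e^{-\frac{\alpha^2t}{16}}$ it tends to $0$. The same holds for the data part of $\phi_x$ in \eqref{definition 6}. Using the rewriting $J_1$ from \eqref{equality 5}, this part is $O(t^{-1/2})$ uniformly on compacts, hence negligible after the exponential factor.

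\textbf{Boundary term: pointwise limit.} Substitute $\tau\mapsto t-\tau$, so the boundary term becomes $-\frac{x}{2\sqrt\pi}\int_0^t e^{-\frac{x^2}{4\tau}}\tau^{-3/2}G(t-\tau)\,d\tau$. Set $g(s):=\sqrt{s}\,e^{-\frac{\alpha^2 s}{16}}G(s)$. Then
\[
\sqrt{t}e^{-\frac{\alpha^2t}{16}}G(t-\tau)=g(t-\tau)\,\frac{\sqrt t}{\sqrt{t-\tau}}\,e^{-\frac{\alpha^2\tau}{16}} .
\]
By Lemma \eqref{approximation of G-1}, $g(t-\tau)\to L$ for each fixed $\tau$. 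Also $\sqrt{t}/\sqrt{t-\tau}\to1$. The formal limit is therefore
\[
-\frac{xL}{2\sqrt\pi}\int_0^\infty \tau^{-3/2}e^{-\frac{x^2}{4\tau}-\frac{\alpha^2\tau}{16}}\,d\tau .
\]
I would evaluate it with the classical identity $\int_0^\infty \tau^{-3/2}e^{-a^2/\tau-b^2\tau}\,d\tau=\frac{\sqrt\pi}{a}e^{-2ab}$, taking $a=|x|/2$ and $b=\alpha/4$ (with $\alpha>0$). This gives $\frac{2\sqrt\pi}{|x|}e^{-\alpha|x|/4}$. Since $x<0$, the limit is $Le^{\frac{\alpha x}{4}}$.

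\textbf{Boundary term: justifying the limit.} I expect domination near $\tau=t$ to be the main obstacle, because $\sqrt t/\sqrt{t-\tau}$ blows up there. Split the integral at $\tau=t/2$.
\begin{itemize}
\item On $[0,t/2]$: $\sqrt{t}/\sqrt{t-\tau}\le\sqrt2$. The function $g$ is bounded on $[0,\infty)$, since it is continuous with a finite limit and $G(0)$ is finite. So the integrand is dominated by $C\tau^{-3/2}e^{-\frac{x^2}{4\tau}-\frac{\alpha^2\tau}{16}}$, which is integrable uniformly for $x$ in a compact set $[-b,-a]$ with $a>0$. Dominated convergence applies.
\item On $[t/2,t]$: use $e^{-\frac{\alpha^2\tau}{16}}\le e^{-\frac{\alpha^2t}{32}}$ and $\tau^{-3/2}\le Ct^{-3/2}$. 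The contribution is then bounded by $Ce^{-\frac{\alpha^2 t}{32}}t^{-1}\int_{t/2}^{t}(t-\tau)^{-1/2}\,d\tau=O(t^{-1/2}e^{-\frac{\alpha^2t}{32}})\to0$.
\end{itemize}
Uniformity on compacts follows because the dominating functions and the convergence of $g$ do not depend on $x$.

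\textbf{The derivative.} For $\phi_x$ I would differentiate the boundary term in $x$ directly, as in the terms $J_2,J_3$ of \eqref{equality 5}. The kernel becomes $\partial_x\big(-\frac{x}{2\sqrt\pi}e^{-\frac{x^2}{4\tau}}\tau^{-3/2}\big)$, which is bounded by $C(1+\tau^{-1})\tau^{-3/2}e^{-\frac{x^2}{4\tau}}$. This is still integrable near $\tau=0$ because $x$ stays away from $0$. The same split at $t/2$ and the same dominated-convergence argument give the limit $L\,\partial_x e^{\frac{\alpha x}{4}}=\frac{L\alpha}{4}e^{\frac{\alpha x}{4}}$, again uniformly on compact subsets of $(-\infty,0)$. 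As a consistency check, this limit equals the $x$-derivative of the limit found for $\phi$.
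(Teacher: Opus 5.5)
Your argument has the same structure as the paper's. You discard the data terms, write $e^{-\alpha^2t/16}=e^{-\alpha^2\tau/16}e^{-\alpha^2(t-\tau)/16}$ in the boundary term, and pass to the limit by dominated convergence against $\tau^{-3/2}e^{-x^2/(4\tau)-\alpha^2\tau/16}$. Your ratio $\sqrt t/\sqrt{t-\tau}$ with a cut at $\tau=t/2$ just replaces the paper's additive split $\sqrt t=(\sqrt t-\sqrt{t-\tau})+\sqrt{t-\tau}$, and you handle the data terms and uniformity more carefully than the paper.

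\textbf{The step that fails} is the one you import from Lemma \ref{approximation of G-1}: that $g(s)=\sqrt s\,e^{-\alpha^2s/16}G(s)$ has a finite limit $L$ and is therefore bounded. In fact $L=+\infty$ for every admissible datum. The paper's own example shows this. For $u_0=0$ on $x<0$ and $u_0=-\alpha$ on $x>0$ one has $\phi_0\equiv1$, so \eqref{extracondition} holds. The computation in the graphical section, which does not use the sign of $\alpha$, gives $G=h=e^{\alpha^2t/16}\operatorname{erfc}(-\alpha\sqrt t/4)$, so $g(t)\sim2\sqrt t$ when $\alpha>0$. In general, write $G=F*H$ as in \eqref{convolution}. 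Then $F>0$, $F(r)\le Cr^{-1/2}$ and $e^{-\alpha^2s/16}H(s)\to\alpha/2$, so dominated convergence gives $e^{-\alpha^2t/16}G(t)\to L':=\frac{\alpha}{2}\int_0^\infty F(r)e^{-\alpha^2r/16}\,dr\in(0,\infty)$. Hence $g$ is unbounded and your domination on $[0,t/2]$ fails. Indeed $\sqrt t\,e^{-\alpha^2t/16}\phi(x,t)\to+\infty$, so the finite limits asserted in the lemma do not exist. The paper's proof, which applies dominated convergence assuming $\sqrt{t-\tau}\,e^{-\alpha^2(t-\tau)/16}G(t-\tau)\to L$, has exactly the same defect.

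\textbf{Source and repair.} The error lies in the proof of Lemma \ref{approximation of G-1}, which discards the term $A$ ``as in the previous section''. For $\alpha>0$ the factor $H\big(\frac{-\alpha\sqrt s}{4}\big)/\sqrt s$ in $A$ grows like $\frac{\alpha}{2}e^{\alpha^2s/16}$ instead of decaying like $s^{-3/2}$. Consequently $e^{-\alpha^2t/16}A\approx\frac{\alpha}{2\sqrt\pi}\int_0^t e^{-\alpha^2r/16}(\sqrt t-\sqrt r)f_1(r)\,dr$ is of order $\sqrt t$. The repair is to drop the factor $\sqrt t$ everywhere and use $g(s):=e^{-\alpha^2s/16}G(s)$, which is genuinely bounded with limit $L'$. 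With this normalization your argument goes through and even simplifies. Since $e^{-\alpha^2t/16}G(t-\tau)=e^{-\alpha^2\tau/16}g(t-\tau)$ carries no singular factor, the cut at $t/2$ is unnecessary, and the data terms are $O(t^{-1/2}e^{-\alpha^2t/16})$. You obtain $e^{-\alpha^2t/16}\phi\to L'e^{\alpha x/4}$ and $e^{-\alpha^2t/16}\phi_x\to\frac{\alpha L'}{4}e^{\alpha x/4}$ uniformly on compact subsets of $(-\infty,0)$. This is all Theorem \ref{limit of solution 1} needs, because only the ratio $\phi_x/\phi$ enters there.
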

\begin{proof}
 By a similar calculation as in lemma \eqref{approximation of phi} we get 
    \begin{equation*}
        \begin{aligned}
       \displaystyle{\lim_{t \to \infty}}  \sqrt{t}e^{-\frac{\alpha^2t}{16}}\phi(x,t)&= \displaystyle{\lim_{t \to \infty}}\Big[-\frac{xe^{-\frac{\alpha^2t}{16}}}{2\sqrt{\pi}}\int_0^t\frac{e^{-\frac{x^2}{4 \tau}}}{\tau^{\frac{3}{2}}}\sqrt{t-\tau}G(t-\tau)d\tau\Big]\\
        &= \displaystyle{\lim_{t \to \infty}}\Big[-\frac{x}{2\sqrt{\pi}}\int_0^{\infty}\chi_{[0,t]}\frac{e^{-\frac{x^2}{4 \tau}-\frac{\alpha^2\tau}{16}}}{\tau^{\frac{3}{2}}}\sqrt{t-\tau}e^{-\frac{\alpha^2(t-\tau)}{16}}G(t-\tau)d\tau\Big].
        \end{aligned}
    \end{equation*}
    Since $\frac{e^{-\frac{x^2}{4 \tau}-\frac{\alpha^2\tau}{16}}}{\tau^{\frac{3}{2}}}$ is integrable, by the dominated convergence theorem we get \begin{equation*}
        \begin{aligned}
            \displaystyle{\lim_{t \to \infty}}  \sqrt{t}e^{-\frac{\alpha^2t}{16}}\phi(x,t)&=-\frac{xL}{2\sqrt{\pi}}\int_0^{\infty}\frac{e^{-\frac{x^2}{4 \tau}-\frac{\alpha^2\tau}{16}}}{\tau^{\frac{3}{2}}}d\tau\\
            &=Le^{\frac{\alpha x}{4}}.
        \end{aligned}
    \end{equation*}
    Similarly,
    \begin{equation*}
        \begin{aligned}
              \displaystyle{\lim_{t \to \infty}}  \sqrt{t}e^{-\frac{\alpha^2t}{16}}\phi_x(x,t)&=\displaystyle{\lim_{t \to \infty}}\Big[\frac{x^2e^{-\frac{\alpha^2t}{16}}}{4\sqrt{\pi}}\int_0^{\infty}\frac{e^{-\frac{x^2}{4\tau}}}{\tau^{\frac{5}{2}}}\chi_{[0,t]}\sqrt{t-\tau}G(t-\tau)d\tau\\&-\frac{e^{-\frac{\alpha^2t}{16}}}{2\sqrt{\pi}}\int_0^{\infty}\frac{e^{-\frac{x^2}{4\tau}}}{\tau^{\frac{3}{2}}}\chi_{[0,t]}\sqrt{t-\tau}G(t-\tau)d\tau\Big]\\
              &=\displaystyle{\lim_{t \to \infty}}\Big[\frac{x^2}{4\sqrt{\pi}}\int_0^{\infty}\frac{e^{-\frac{x^2}{4\tau}-\frac{\alpha^2\tau}{16}}}{\tau^{\frac{5}{2}}}\chi_{[0,t]}\sqrt{t-\tau}e^{-\frac{\alpha^2(t-\tau)}{16}}G(t-\tau)d\tau\\&-\frac{1}{2\sqrt{\pi}}\int_0^{\infty}\frac{e^{-\frac{x^2}{4\tau}-\frac{\alpha^2\tau}{16}}}{\tau^{\frac{3}{2}}}\chi_{[0,t]}\sqrt{t-\tau}e^{-\frac{\alpha^2(t-\tau)}{16}}e^{-\frac{\alpha^2(t-\tau)}{16}}G(t-\tau)d\tau\Big]\\
              &=\frac{L\alpha}{4}e^{\frac{\alpha x}{4}}.
        \end{aligned}
    \end{equation*}
     \end{proof}
Similarly, we can prove the following lemma.
\begin{lemma}\label{approximation of psi 1}
    Under assumption \eqref{extracondition} along with $\alpha>0,$ $\sqrt{t}e^{-\frac{\alpha^2t}{16}}\psi$ and $\sqrt{t}e^{-\frac{\alpha^2t}{16}}\psi_x$ converge uniformly in compact subsets of $(0, \infty)$, and the limits are given by \begin{equation*}
        \begin{aligned}
            &\displaystyle{\lim_{t \to \infty}}\sqrt{t}e^{-\frac{\alpha^2t}{16}}\psi(x,t)=Le^{\frac{-\alpha x}{4}} ,\\&
            \displaystyle{\lim_{t \to \infty}}\sqrt{t}e^{-\frac{\alpha^2t}{16}}\psi_x(x,t)=-\frac{L\alpha}{4}e^{-\frac{\alpha x}{4}}.
        \end{aligned}
    \end{equation*}
\end{lemma}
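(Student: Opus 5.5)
We mirror the proof of Lemma \ref{approximation of phi 1}, now on $x>0$, using the representation \eqref{formula for psi} for $\psi$ and \eqref{definition 5} for $\psi_x$. After substituting $\tau\mapsto t-\tau$ in the boundary integral, we multiply by $\sqrt{t}e^{-\frac{\alpha^2 t}{16}}$ and split
\begin{equation*}
\sqrt{t}e^{-\frac{\alpha^2 t}{16}}\psi(x,t)=\sqrt{t}e^{-\frac{\alpha^2 t}{16}}\,\frac{1}{2\sqrt{\pi t}}\int_0^\infty \Big\{e^{-\frac{(x-\xi)^2}{4t}}-e^{-\frac{(x+\xi)^2}{4t}}\Big\}\phi_0(\xi)\,d\xi+\frac{x}{2\sqrt{\pi}}\int_0^t\frac{e^{-\frac{x^2}{4\tau}}}{\tau^{3/2}}\sqrt{t}\,e^{-\frac{\alpha^2 t}{16}}G(t-\tau)\,d\tau .
\end{equation*}

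\textbf{Initial-data term.} By the same change of variables as for $I_3$ in Lemma \ref{approximation of phi}, this term is $O(\sqrt{t})$ uniformly for $x$ in compacts. Hence, after multiplication by $e^{-\frac{\alpha^2 t}{16}}$ it tends to $0$ uniformly, since $\alpha>0$ gives exponential decay.

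\textbf{Boundary term.} Write
\begin{equation*}
\sqrt{t}\,e^{-\frac{\alpha^2 t}{16}}G(t-\tau)=\Big[\sqrt{t-\tau}\,e^{-\frac{\alpha^2(t-\tau)}{16}}G(t-\tau)\Big]e^{-\frac{\alpha^2\tau}{16}}+\big(\sqrt{t}-\sqrt{t-\tau}\big)e^{-\frac{\alpha^2 t}{16}}G(t-\tau).
\end{equation*}
\begin{itemize}
\item \emph{Second piece.} Use $\sqrt t-\sqrt{t-\tau}\le \tau/\sqrt t$. Lemma \ref{approximation of G-1} gives the bound $e^{-\frac{\alpha^2(t-\tau)}{16}}|G(t-\tau)|\le C/\sqrt{t-\tau}$ for $t-\tau\ge 1$, together with continuity on $[0,1]$. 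The piece is then bounded by $t^{-1/2}\int_0^t e^{-\alpha^2\tau/16}\tau^{-1/2}e^{-x^2/4\tau}(t-\tau)^{-1/2}\,d\tau\to 0$.
\item \emph{First piece.} The bracket converges to $L$ by Lemma \ref{approximation of G-1} and is bounded. The weight $\tau^{-3/2}e^{-x^2/4\tau-\alpha^2\tau/16}$ is integrable, uniformly for $x$ in a compact subset of $(0,\infty)$. Dominated convergence therefore gives
\begin{equation*}
\frac{xL}{2\sqrt\pi}\int_0^\infty \tau^{-3/2}e^{-\frac{x^2}{4\tau}-\frac{\alpha^2\tau}{16}}\,d\tau .
\end{equation*}
\end{itemize}

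Evaluate this integral with the classical identity $\int_0^\infty \tau^{-3/2}e^{-a^2/\tau-b^2\tau}\,d\tau=\frac{\sqrt\pi}{a}e^{-2ab}$, taking $a=x/2$ and $b=\alpha/4$ (note $2ab=\alpha x/4$). The result is $Le^{-\alpha x/4}$. The sign is correct here because $x>0$ and the boundary term carries a $+x$, whereas in Lemma \ref{approximation of phi 1} it carried $-x$ with $|x|=-x$.

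\textbf{Derivative.} For $\psi_x$ we use \eqref{definition 5}. Integrating the $G'$ term by parts, as in \eqref{equality 5}, turns it into $-\frac{x^2}{4\sqrt\pi}\int\tau^{-5/2}e^{-x^2/4\tau}G(t-\tau)\,d\tau+\frac{1}{2\sqrt\pi}\int \tau^{-3/2}e^{-x^2/4\tau}G(t-\tau)\,d\tau$; the $G(0)$ terms cancel. The same splitting and dominated convergence then apply, since $\tau^{-5/2}e^{-x^2/4\tau-\alpha^2\tau/16}$ is also integrable for $x$ bounded away from $0$. The initial-data part again is killed by $e^{-\frac{\alpha^2 t}{16}}$. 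The limit is $L$ times $-\partial_x$ of the previous kernel integral, i.e. $\partial_x\big(Le^{-\alpha x/4}\big)=-\frac{L\alpha}{4}e^{-\alpha x/4}$. Differentiating under the integral in $x$ is justified by the same domination, and this is the quickest way to identify the constant.

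\textbf{Main obstacle.} The delicate step is making the domination uniform near $\tau=t$, where $G(t-\tau)$ is evaluated at small arguments. Split $\int_0^t=\int_0^{t-1}+\int_{t-1}^t$. On $[t-1,t]$, the factor $e^{-\alpha^2\tau/16}\le e^{-\alpha^2(t-1)/16}$ and the boundedness of $G$ on $[0,1]$ make this part vanish exponentially. On $[0,t-1]$, the bound from Lemma \ref{approximation of G-1} supplies the dominating function. Uniformity on compacts of $(0,\infty)$ follows because all dominating functions can be chosen uniformly for $x\in[a,b]\subset(0,\infty)$.
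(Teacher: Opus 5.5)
Your proposal is correct relative to Lemma~\ref{approximation of G-1} and follows the paper's own route: the paper proves this lemma only by ``similarly'', i.e.\ by rerunning the proof of Lemma~\ref{approximation of phi 1} on $x>0$. Your treatment of the initial-data term, the split of the boundary integral, dominated convergence with $\int_0^\infty \tau^{-3/2}e^{-a^2/\tau-b^2\tau}\,d\tau=\frac{\sqrt{\pi}}{a}e^{-2ab}$, and the integration by parts for $\psi_x$ supply exactly those details. There is one small slip: the limiting kernel for $\psi_x$ is $+\partial_x$, not $-\partial_x$, of the one for $\psi$, although your final constant $-\frac{L\alpha}{4}e^{-\alpha x/4}$ is right.

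There is one caveat you inherit from the paper. Lemma~\ref{approximation of G-1} is misnormalized for $\alpha>0$: for $\phi_0\equiv 1$ the paper's own computation in the graphical example gives $G=h\sim 2e^{\alpha^2 t/16}$. Hence $\sqrt{t}\,e^{-\alpha^2 t/16}G(t)\to\infty$, and your bound $e^{-\alpha^2 s/16}|G(s)|\le C/\sqrt{s}$ is false. Your argument still goes through, and more simply, if you use the weight $e^{-\alpha^2 t/16}$ and set $L:=\lim_{t\to\infty}e^{-\alpha^2 t/16}G(t)>0$. The ratio $\psi_x/\psi$ is unchanged, so you still get $u\to-\frac{\alpha}{2}$.
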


Next, we characterize the asymptotic behavior of the weak solution of \eqref{IVP}.
\subsubsection{Proof of the theorem \eqref{limit of solution 1}}
\begin{proof}
   According to theorem \eqref{explicit solution}, the unique weak solution is given by \begin{equation*}
         u(x,t)=\begin{cases}
           -2\frac{\phi_x(x,t)}{\phi(x,t)}, &x<0,t>0\\
            -2\frac{\psi_x(x,t)}{\psi(x,t)}-\alpha, &x>0,t>0.
        \end{cases}
    \end{equation*} Therefore, for $x<0,t>0$, lemma \eqref{approximation of phi 1} gives 
    \begin{equation*}
            \displaystyle{\lim_{t \to \infty}}u(x,t)=    -2 \displaystyle{\lim_{t \to \infty}}\frac{\sqrt{t}e^{-\frac{\alpha^2t}{16}}\phi_x(x,t)}{\sqrt{t}e^{-\frac{\alpha^2t}{16}}\phi(x,t)}=-\frac{\alpha}{2}.
    \end{equation*}  and for $x>0,t>0$, lemma \eqref{approximation of psi 1} gives \begin{equation*}
              \displaystyle{\lim_{t \to \infty}}u(x,t)=-2 \displaystyle{\lim_{t \to \infty}}\frac{\sqrt{t}e^{-\frac{\alpha^2t}{16}}\psi_x(x,t)}{\sqrt{t}e^{-\frac{\alpha^2t}{16}}\psi(x,t)}-\alpha
            =-\frac{\alpha}{2}.
    \end{equation*}
\end{proof}

\subsection{Graphical representation of the solution}
The graph of the explicit solution for $\alpha=1$ is shown below. The solid curves represent the explicit solution, while the dashed curves represent the corresponding steady-state solution
$ \displaystyle{\lim_{t \to \infty}}  u(x,t)=-\frac{1}{2}.$

\begin{figure}[htbp]
    \centering
    
    \begin{subfigure}[b]{0.29\linewidth}
        \centering
        \includegraphics[width=\textwidth]{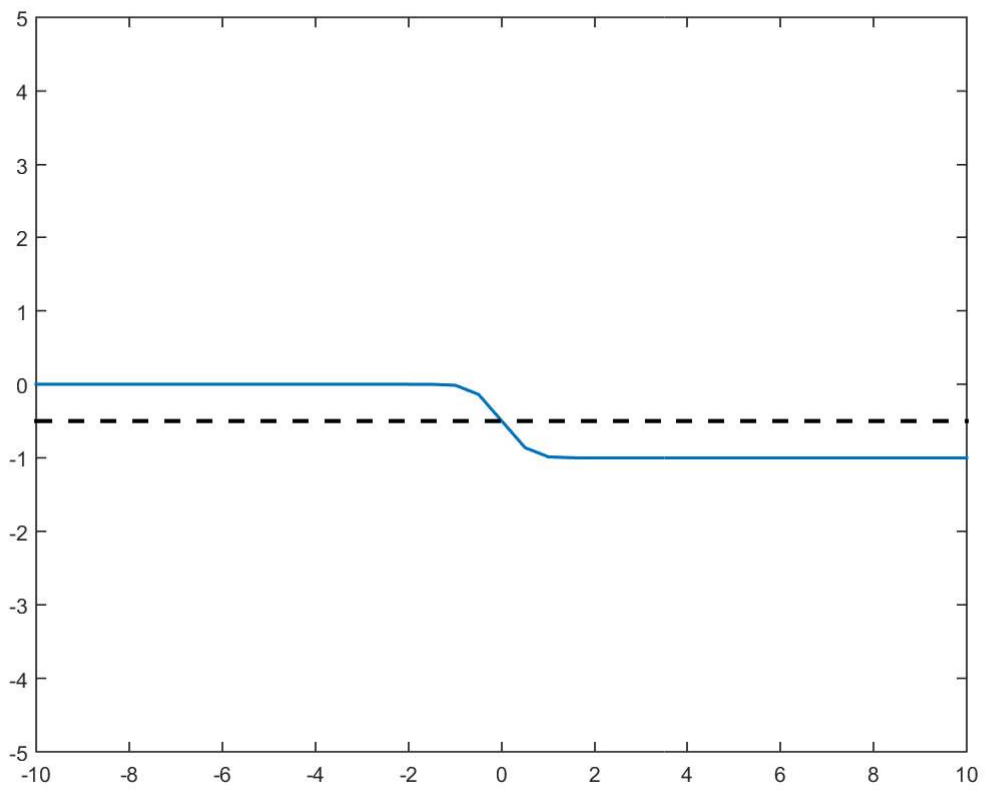}
        \caption{$t = 0.1$}
    \end{subfigure}
    \hfill
   \begin{subfigure}[b]{0.29\linewidth}
        \centering
        \includegraphics[width=\textwidth]{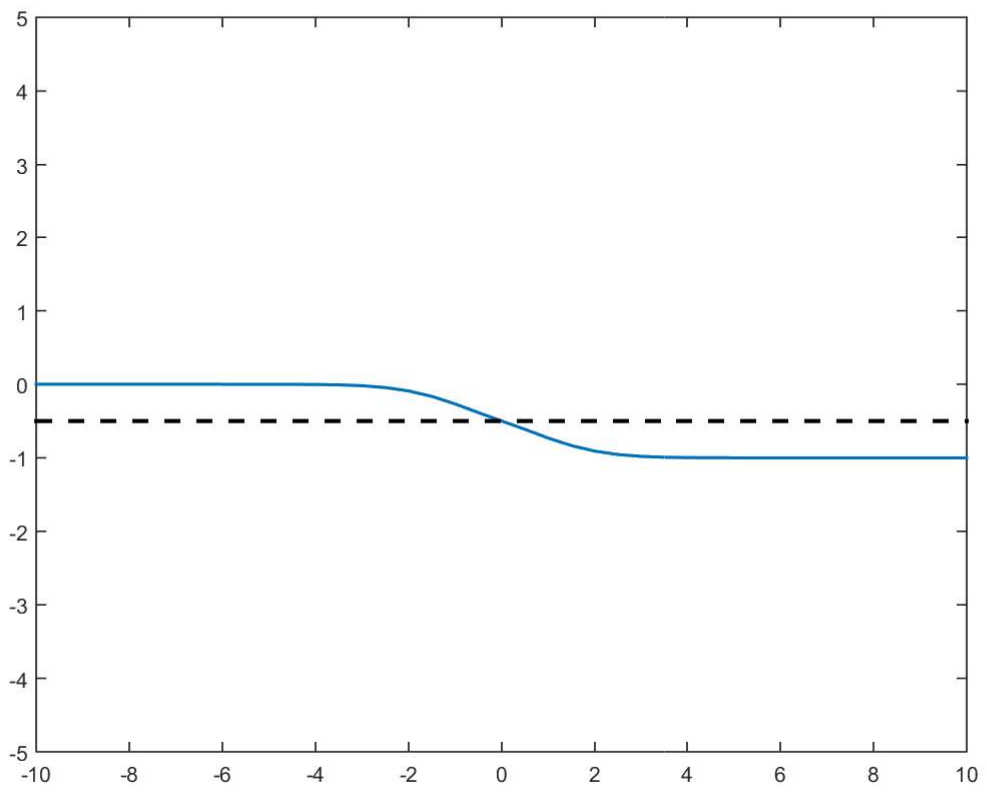}
        \caption{$t = 1$}
    \end{subfigure}
    \hfill
    \begin{subfigure}[b]{0.29\linewidth}
        \centering
        \includegraphics[width=\textwidth]{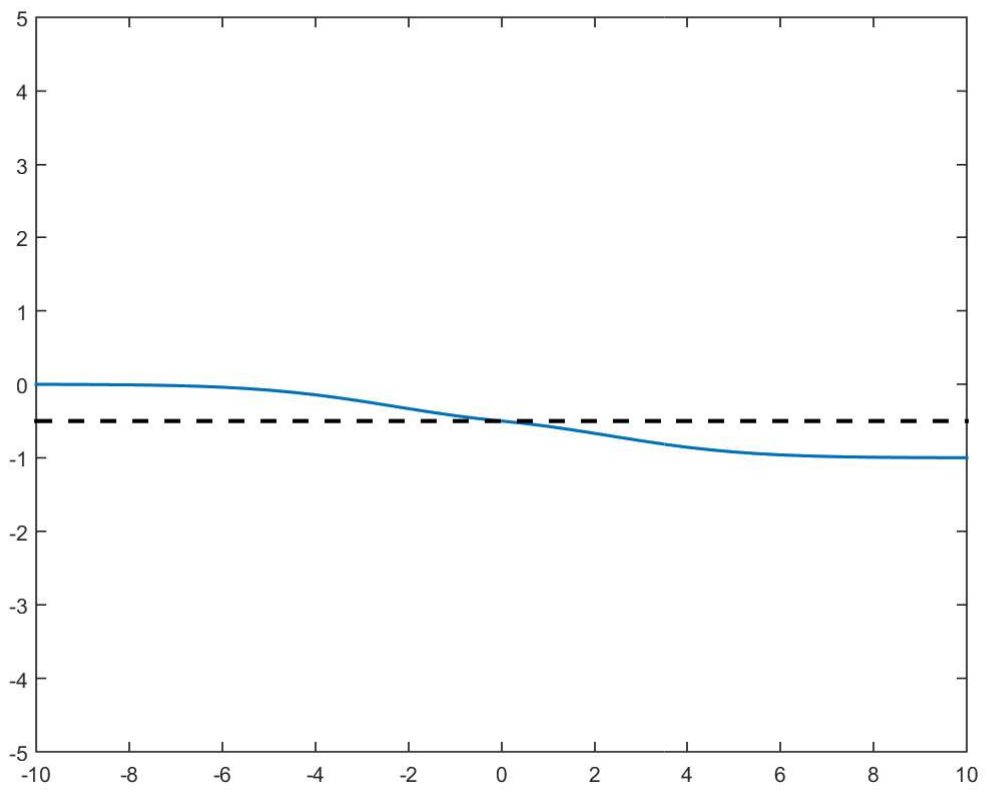}
        \caption{$t = 5$}
    \end{subfigure}
    \begin{subfigure}[b]{0.29\linewidth}
        \centering
        \includegraphics[width=\textwidth]{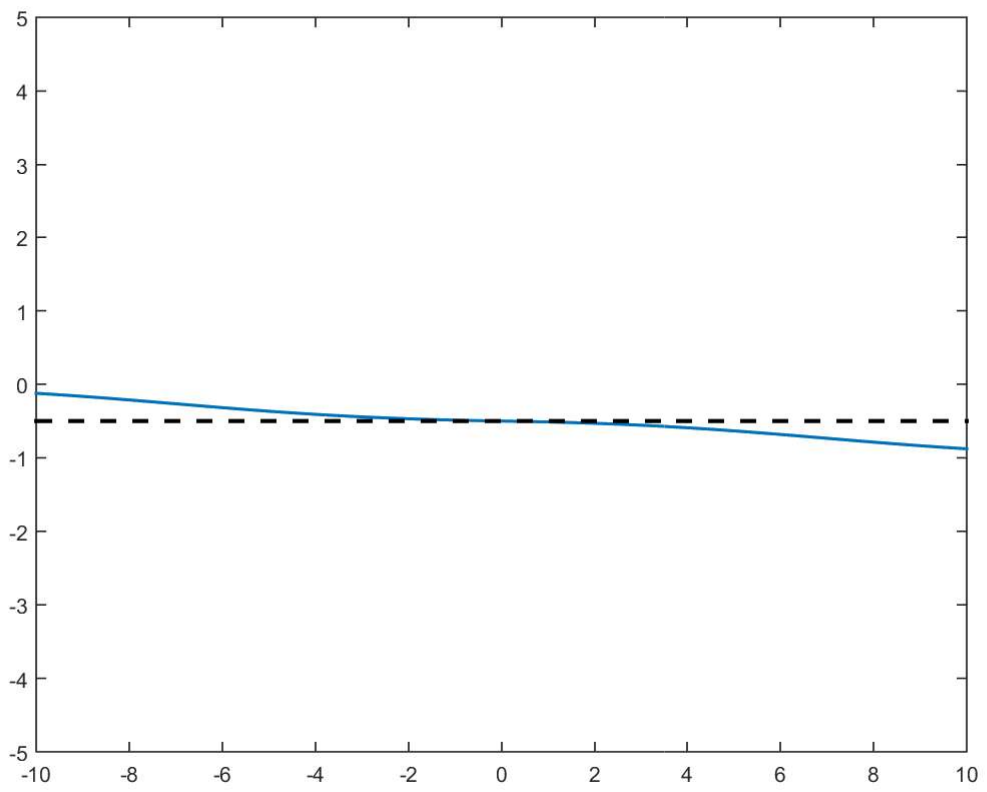}
        \caption{$t = 20$}
    \end{subfigure}
     \hfill 
    \begin{subfigure}[b]{0.29\linewidth}
        \centering
        \includegraphics[width=\textwidth]{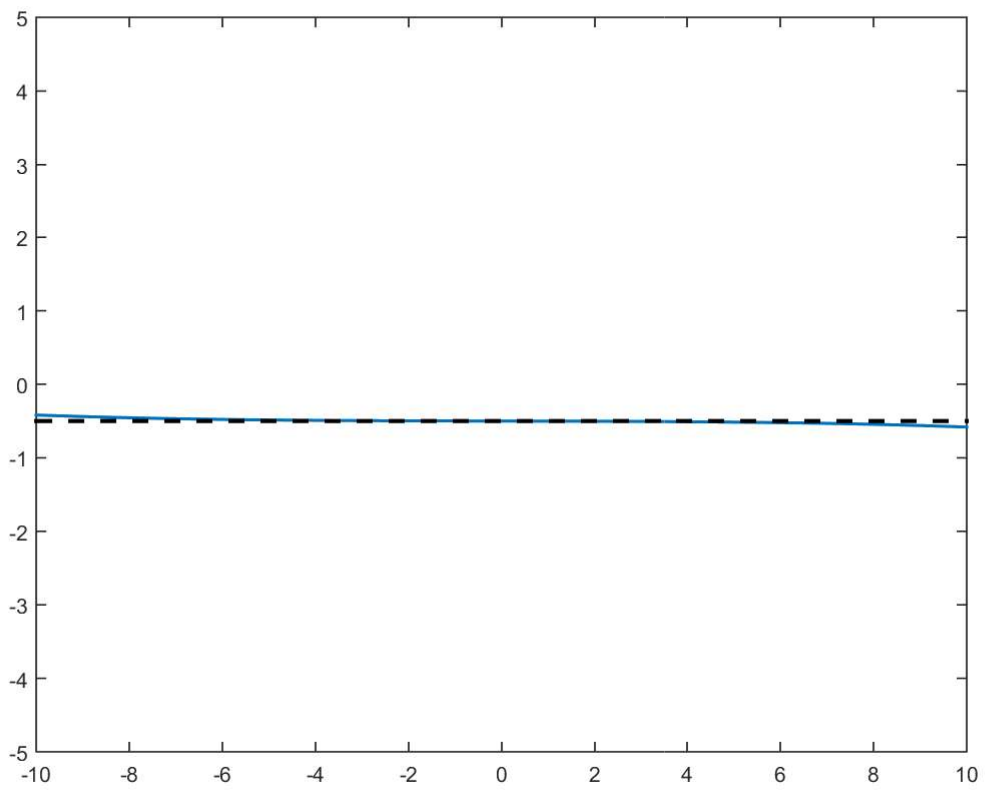}
        \caption{$t = 50$}
    \end{subfigure}
    \hfill
   \begin{subfigure}[b]{0.29\linewidth}
        \centering
        \includegraphics[width=\textwidth]{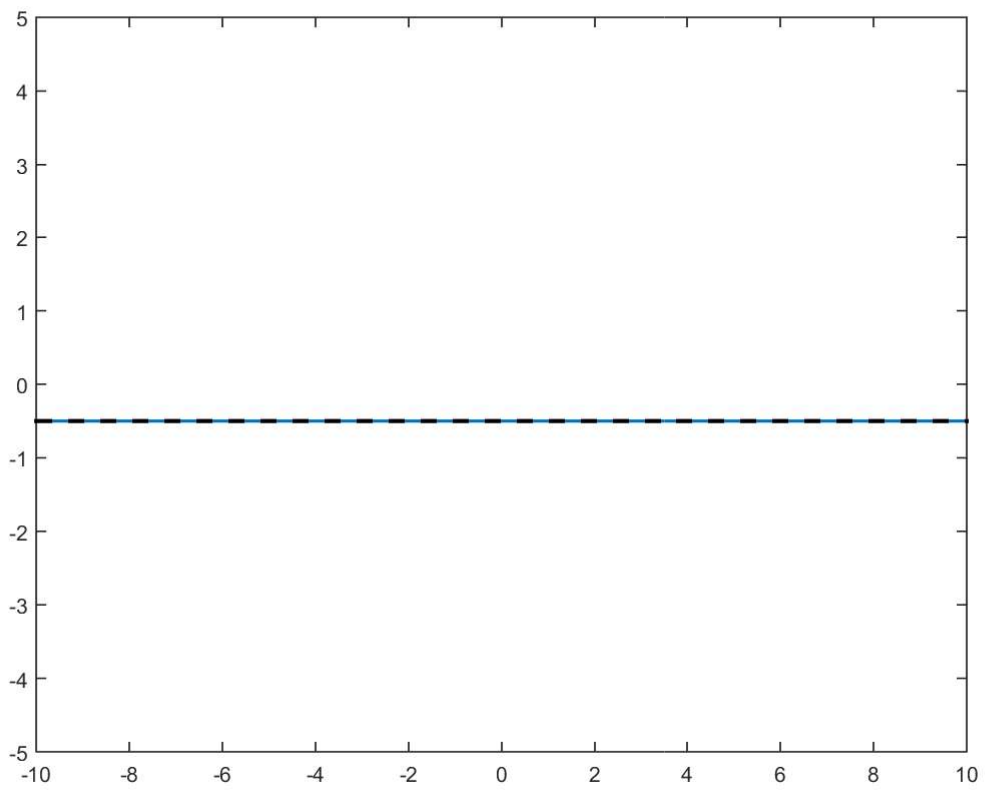}
        \caption{$t = 200$}
    \end{subfigure}
\end{figure}
\newpage
 \subsection*{Acknowledgments}
The first author acknowledges the financial support provided by the Council of Scientific and Industrial Research (CSIR), Government of India, under Junior Research Fellowship (JRF), File No. [09/1002(19997)/2024-EMR-I].

\bibliographystyle{plain}	
\bibliography{Reference}
\end{document}